\documentclass[oneside,english,reqno]{amsart}
\usepackage[T1]{fontenc}
\usepackage[latin9]{inputenc}
\usepackage{amsthm}
\usepackage[english]{babel}
\usepackage{amsfonts}
\usepackage{amssymb}
\usepackage{amsthm}
\usepackage{newlfont}

\usepackage[colorlinks=true,urlcolor=blue,citecolor=blue,linkcolor=black,linktocpage,pdfpagelabels,bookmarksnumbered,bookmarksopen]{hyperref}
 \usepackage{bigints}
\usepackage[normalem]{ulem}

\newcommand{\Wp}{W_0^{1,p}(\Omega)}

\newcommand{\Om}{\Omega}
\newcommand{\tOm}{\tilde{\Omega}}

\newcommand{\norm}[1]{\left\Vert#1\right\Vert}

\newcommand{\Zueps}{Z_{u}^{\bar{\varepsilon}}}

\numberwithin{equation}{section} 
\numberwithin{figure}{section} 
\newtheorem{thm}{Theorem}[section]
\newtheorem{cor}[thm]{Corollary}
\newtheorem{lem}[thm]{Lemma}
\newtheorem{prop}[thm]{Proposition}
\theoremstyle{definition}
\newtheorem{defn}[thm]{Definition}
\theoremstyle{remark}
\newtheorem{rem}[thm]{Remark}
\theoremstyle{example}

\theoremstyle{cexample}

\theoremstyle{procedure}

\numberwithin{equation}{section}
\newcommand{\Real}{\mathbb{R}}
\makeatother

\begin{document}

\title[Existence and regularity for quasilinear elliptic problems]{Existence and regularity for a general class of quasilinear elliptic problems involving the hardy potential}
\author{giusy chirillo, luigi montoro, luigi muglia, berardino sciunzi}
\address{giusy chirillo, luigi montoro, luigi muglia, berardino sciunzi: Dipartimento di Matematica, Universit\'a della Calabria, 87036 Arcavacata di Rende (CS), ITALY}
\email{chirillo@mat.unical.it; montoro@mat.unical.it, muglia@mat.unical.it, sciunzi@mat.unical.it}
\keywords{Quasilinear elliptic equations; Hardy potential; Supercritical problems; Existence and Regularity}
\subjclass[2020]{35J62, 35B65, 35J70}
\begin{abstract}
In a very general quasilinear setting, we show that the regularizing
effect of a first order term causes the existence of energy solutions for
problems involving the Hardy potential and $L^1$ data. In the same setting
we study sharp (local and global) integral estimates for the second
derivatives of the solutions.
\end{abstract}
\thanks{Corresponding author: Berardino Sciunzi, Universit\'a della Calabria, via P. Bucci, 87036, Arcavacata di Rende (CS), Italy}

\maketitle

\section{Introduction}

The aim of this paper is to study the existence and regularity of positive weak solutions to the nonlinear quasilinear elliptic equation
\begin{equation}\label{P}
\begin{cases} \displaystyle
-\mbox{div}\left(A(|\nabla u|)\nabla u\right)+B(|\nabla u|) = \vartheta \frac{u^q}{|x|^p}+f &\mbox{ in } \Omega,\\
u \geq 0 &\mbox{ in } \Omega,\\
u = 0 &\mbox{ on } \partial \Omega,
\end{cases}
\end{equation}
where $\Omega$ is a smooth bounded domain in $\Real^N$, $0\in \Omega$, $\vartheta >0$, $p\in(1,N)$, $q\in (p-1,p)$, $f \geq 0$ and $f\in L^1(\Omega)$.

\noindent The real function $A: \Real^+ \rightarrow \Real^+$ is assumed to be of class $C^1(\Real^+)$ and fulfills the following assumptions
\begin{align}
\bullet & \ \ t \mapsto tA(t) \mbox{ is an increasing function} \label{crescenzaA};\\
\bullet & \ \exists K\geq 1 :\forall \eta \in \mathbb{R}^N \mbox{ with } |\eta| \geq K, \ \left|A( |\eta|)\eta\right| \leq c_1 |\eta|^{p-1}; \ \ \label{D10}\\
\bullet & \  \forall  \eta, \eta' \in \mathbb{R}^N, \left[A( |\eta|)\eta-A( |\eta'|)\eta'\right]\cdot [\eta-\eta'] \geq c_2  (|\eta|+|\eta'|)^{p-2}|\eta-\eta'|^2  \label{D2}.
\end{align}

\noindent We set $\overline{\Real}^+:= \Real^+ \cup \{0\}$. The real function $B: \overline{\Real}^+ \rightarrow \overline{\Real}^+$ is of class $C^1(\overline{\Real}^+)$ and it satisfies 
\begin{align}
\bullet & \  B(0)=0 \label{B0};\\
\bullet & \  B(t) \geq \sigma \ t^p \mbox{ for some } \sigma >0 \label{stimaBbasso};\\
\bullet & \  B'(t) \leq \hat{C}tA(t) \mbox{ for some } \hat{C}>0, \forall t \in \Real^+. \label{crescitaB'}
\end{align}

Let us start making explicit what we means by positive weak solution and stating our main result:

\begin{defn}
	We say that $u$ is a weak solution to 
	$$ -\mbox{div}\left(A(|\nabla u|)\nabla u\right)+B(|\nabla(u)|) = \vartheta \frac{u^q}{|x|^p}+f \quad \mbox{ in } \Omega,$$
	if $u\in W_0^{1,p}(\Omega)$ and 
	\begin{equation*}
	\begin{split}
	\int_{\Omega} A(|\nabla u|)(\nabla u, \nabla \varphi )\,dx +\int_{\Om} B(|\nabla u|)\varphi	\,dx  =   \vartheta &\int_{\Om} \frac{u^q}{|x|^p}\varphi \,dx + \int_{\Om} f \varphi \,dx \\ &\forall
	\varphi \in W_0^{1,p}(\Om) \cap L^{\infty}(\Omega). 
	\end{split}
	\end{equation*}
\end{defn}

\begin{thm}\label{ExistenceM}
	Let $f \in L^1(\Omega)$ a positive function; then for every $\vartheta >0$ there exists a weak solution $u \in \Wp $ to \eqref{P}.
\end{thm}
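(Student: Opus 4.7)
The strategy is a standard approximation scheme: regularize the data and the singular weight so that the right hand side is bounded, solve the approximating problems by classical theory, establish a uniform $\Wp$-estimate by exploiting the coercive first-order term $B$, and finally pass to the limit via almost-everywhere convergence of gradients.

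For $n\in\mathbb{N}$ I would consider the approximating problem
\[
-\mathrm{div}\bigl(A(|\nabla u_n|)\nabla u_n\bigr)+B(|\nabla u_n|)=\vartheta\,\frac{T_n(u_n^q)}{(|x|+\tfrac{1}{n})^p}+T_n(f)\quad\text{in }\Om,
\]
with $u_n=0$ on $\partial\Om$ and $T_n$ the standard truncation at height $n$. Since the source belongs to $L^\infty(\Om)$ for each fixed $n$, the existence of a non-negative solution $u_n\in\Wp\cap L^\infty(\Om)$ follows from the classical Leray-Lions theory of pseudomonotone operators, the non-negativity being obtained by testing with $-u_n^-$ and using \eqref{D2} together with the non-negativity of the source.

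The crucial step is the uniform energy estimate. Testing the approximating equation with $\varphi_n=1-(1+u_n)^{-\gamma}\in\Wp\cap L^\infty(\Om)$ for a suitable $\gamma>0$, and using that \eqref{D2} with $\eta'=0$ gives $A(|\eta|)|\eta|^2\geq c_2|\eta|^p$, together with the coercivity \eqref{stimaBbasso} on $B$, one obtains an inequality of the form
\[
\int_\Om|\nabla u_n|^p\Bigl[c_2\gamma(1+u_n)^{-\gamma-1}+\sigma\bigl(1-(1+u_n)^{-\gamma}\bigr)\Bigr]\,dx\leq\vartheta\int_\Om\frac{u_n^q}{|x|^p}\,dx+\norm{f}_{L^1(\Om)}.
\]
The bracket on the left is bounded below by a positive constant (split into $\{u_n\leq 1\}$ and $\{u_n>1\}$). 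For the Hardy term on the right, H\"older's inequality with conjugate exponents $p/q$ and $p/(p-q)$, the classical Hardy inequality, and the integrability of $|x|^{-p}$ on $\Om$ (since $p<N$) yield
\[
\int_\Om\frac{u_n^q}{|x|^p}\,dx\leq C\norm{\nabla u_n}_{L^p(\Om)}^q.
\]
Since $q<p$, Young's inequality absorbs the right hand side into the left, giving $\norm{u_n}_{\Wp}\leq C$ uniformly in $n$.

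Up to a subsequence, $u_n\rightharpoonup u$ in $\Wp$ and a.e.\ in $\Om$. To pass to the limit in the nonlinear principal part and in $B(|\nabla u_n|)$, almost-everywhere convergence of $\nabla u_n$ is required; this is obtained by a Boccardo-Murat type argument, testing with $T_k(u_n-u)$ and exploiting the strict monotonicity \eqref{D2}. Then Fatou's lemma controls the lower order term $B(|\nabla u_n|)$, while dominated convergence handles the Hardy source (equi-integrable thanks to the uniform bound above) and the truncated $L^1$ datum. The main obstacle is the a priori estimate itself: neither the singular Hardy term nor the $L^1$ datum can be absorbed by the principal part alone, and it is precisely the interplay between the coercivity of $B$ and the subcriticality $q<p$ that closes the energy estimate; a further delicate point is the strong gradient convergence in the presence of the gradient-dependent lower order term $B(|\nabla u|)$.
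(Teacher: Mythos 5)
Your overall strategy (truncate/regularize the data, get a uniform $\Wp$ bound via the coercivity of $B$, pass to the limit through a.e.\ gradient convergence) matches the paper's, and your a priori estimate via the test function $1-(1+u_n)^{-\gamma}$ combined with H\"older, Hardy and Young is a valid and arguably cleaner alternative to the paper's route through Lemma~\ref{lemmastima} and the functions $\psi_n$. However, there are two genuine gaps.

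First, you assert that existence for the approximating problem ``follows from the classical Leray--Lions theory of pseudomonotone operators'' once the source is $L^\infty$. This is not correct: the obstruction is not the data but the operator. The term $B(|\nabla u|)$ has \emph{natural growth} (indeed $B(t)\geq\sigma t^p$), so it maps $\Wp$ only into $L^1(\Om)$, not $W^{-1,p'}(\Om)$; the map $u\mapsto-\mathrm{div}(A(|\nabla u|)\nabla u)+B(|\nabla u|)$ is therefore not a Leray--Lions operator and the classical theory does not apply. The paper has to spend all of Subsection~3.1 on exactly this: it regularizes $B$ by the bounded $H_n(B)=B/(1+B/n)$, freezes the source via the iterate $w_{n-1}$, invokes the Boccardo--Murat--Puel sub/supersolution theorem to solve each iterated problem, and then passes to the limit $n\to\infty$ (again handling the $B$ term with care) to obtain $u_k$. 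Your scheme skips this entire layer.

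Second, the passage to the limit in $\int_\Om B(|\nabla u_n|)\varphi\,dx$ cannot be handled by Fatou: Fatou gives only a one-sided inequality, whereas the weak formulation requires equality (and $\varphi$ need not have a sign). One needs $B(|\nabla u_n|)\to B(|\nabla u|)$ strongly in $L^1(\Om)$, which the paper obtains from strong $\Wp$ convergence of $u_k$ and Vitali's theorem. Moreover, the ``Boccardo--Murat type argument, testing with $T_k(u_n-u)$'' does not go through as written, precisely because of $B$: testing produces $\int_\Om B(|\nabla u_n|)T_k(u_n-u)\,dx$, and on $\{u_n<u\}$ this term has the wrong sign and cannot be discarded. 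The paper's way around this is the exponential weight: it splits the test function into positive and negative parts and, for the negative part, uses $e^{-\hat C u_n}(w_n-u_k)^-$ (resp.\ $e^{-\hat C T_m(u_k)}(T_m(u_k)-T_m(u))^-$), so that the hypothesis $B(t)\leq\hat C t^2A(t)$ from \eqref{stimaBalto} lets the $B$ term be absorbed into the principal part. You flag the gradient convergence as ``delicate'' but do not supply this mechanism, and without it the argument does not close.
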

This result is proved in the Section 3.\\
Let us observe that one immediately recognized that the choices $A(t)=t^{p-2}$ and $B(t)=t^p$ leads to the supercritical problem
\begin{equation}
\begin{cases} \displaystyle
-\mbox{div}\left(|\nabla u|^{p-2}\nabla u\right)+|\nabla u|^p = \vartheta \frac{u^q}{|x|^p}+f &\mbox{ in } \Omega,\\
u \geq 0 &\mbox{ in } \Omega,\\
u = 0 &\mbox{ on } \partial \Omega,
\end{cases}
\end{equation}
for which existence and qualitative properties are proved in \cite{MontSciuPM}. Moreover taking into account \cite{Benetal, Dall'Aglio, DalMaso} we remark that our positive weak solutions are \emph{solutions obtained as limits of approximations} (SOLA) and therefore (in our case) equivalent to the entropy solutions.

\noindent The main elements of the proof are summarized in what follows.
\begin{itemize}
 \item In order to prove the existence of a solution $u_k$ for the truncated problem
 \begin{equation*}\displaystyle
	-\mbox{div} (A(|\nabla u_k|)\nabla u_k)+B(|\nabla u_k|) = \vartheta T_k\left(\frac{u_k^q}{|x|^p}\right)+T_k(f) \quad \mbox{ in } \Omega.
	\end{equation*} 
 we construct a sequence $(w_n)_{n\in\mathbb{N}}$ of solutions to the problem 
\begin{equation*}
-\mbox{div}\left(A(|\nabla w_n|)\nabla w_n\right)+\frac{B(|\nabla w_n|)}{1+\frac{1}{n}B(|\nabla w_n|)} = \vartheta T_k\left(\frac{w_{n-1}^q}{|x|^p}\right)+T_k(f) \mbox{ in } \Omega
\end{equation*}
and we prove that $(w_n)_{n\in\mathbb{N}}$ $W_0^{1,p}(\Om)-$converges to $u_k$.
\item We prove that the sequence $(u_k)_{k\in\mathbb{N}}$ of solutions of the family of truncated problem weak converges in $W_0^{1,p}(\Om)$ to a function $u$. This will imply the $W_0^{1,p}(\Om)-$convergence of $T_m(u_k)$ to $T_m(u)$ uniformly on $m$ and finally this will permit to get an a.e.- pointwise convergence of $\nabla u_k$ to $\nabla u$.
\item We prove that $(u_k)_{k\in\mathbb{N}}$ strongly converges to $u$ and moreover that $u$ is the positive weak solution to our problem.
\end{itemize}

Solvability of Problem \eqref{P} already in the $p-$Laplacian case depends of the position of the origin with respect to the domain. This is proved in \cite{DPCalcVar11, MPJMAA12} for $p=2$ and in \cite{APAM03, MMAMPA14} for $p>1$. It is proved in \cite{APAM03} that if the origin belongs to the interior of $\Omega$ we have no
solutions even in entropic sense; for this reason in \cite{MMAMPA14} the authors consider $0 \in \partial \Om$. In this case the existence of solutions turns to be depending on the presence of the source and then on the geometry of the domain.

In this paper we consider $0\in \Omega$ but the attendance of a suitable term involving the gradient, leads a regularizing effect that permits to prove the existence of a solution.\ \\ \ \\

The second part of the paper (developed in Section 4) is devoted to obtain some summability properties of the gradient as well as the second derivatives of the solutions of \eqref{P}. 
In this section we need a further hypothesis on $A(t)$ that is
\begin{equation}\label{1.10}
-1< \inf\limits_{t> 0} \frac{tA'(t)}{A(t)}=: m_A \leq M_A:= \sup\limits_{t> 0} \frac{tA'(t)}{A(t)} <+\infty.
\end{equation}
Therefore the main results of this section can be included in the following
\begin{thm} \label{thm1.3}
 	Assume $1<p<N$ and consider $u\in C^{1,\alpha}_{loc}(\bar{\Omega} \setminus \{0\})\cap C^2(\Om \setminus (Z_u \cup \{0\}))$ a solution to \eqref{ProbPuntuale},
	where $f \in W^{1,\infty}(\bar{\Om})$. We have 	
	
	\begin{equation*}
\int_{E \setminus Z_u} \frac{A(|\nabla u|)|\nabla u_i|^2}{|x-y|^{\gamma}|u_i|^{\beta}} \,dx \leq \mathcal{C} \quad \forall i=1,...,N
	\end{equation*}
	
	\noindent for any $E \Subset \Om \setminus \{0\}$ and uniformly for any $y \in E$, with 
	$$\mathcal{C}:= \mathcal{C}(\gamma,m_A,M_A, \beta, h,\|\nabla u\|_\infty,\rho)$$
	for $0 \leq \beta <1$ and $\gamma < (N-2)$ if $N \geq 3$ ($\gamma=0$ if $N=2$).\\
	
	\noindent Moreover, if we also assume that $\Om$ is a smooth domain and $f$ is nonnegative in $\Om$, we have that 
	
	\begin{equation*}
\int_{\Om \setminus (Z_u \cup \{0\})} \frac{A(|\nabla u|)|\nabla u_i|^2}{|x-y|^{\gamma}|u_i|^{\beta}} \,dx \leq \mathcal{C} \quad \forall i=1,...,N.
	\end{equation*}
\end{thm}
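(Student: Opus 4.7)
The plan is to derive the weighted $L^2$ estimate by testing the equation satisfied by the directional derivative $u_i:=\partial u/\partial x_i$ against a carefully chosen test function that incorporates both singular weights $|x-y|^{-\gamma}$ and $|u_i|^{-\beta}$, in the spirit of the Damascelli--Sciunzi approach adapted here to the general operator driven by $A(|\nabla u|)$. On the open set $\Om\setminus(Z_u\cup\{0\})$, where by hypothesis $u\in C^2$, differentiating \eqref{ProbPuntuale} in the $x_i$-direction produces a linearized equation of the form
\[
-\partial_j\bigl(a_{jk}(\nabla u)\,\partial_k u_i\bigr) + \frac{B'(|\nabla u|)}{|\nabla u|}\langle \nabla u,\nabla u_i\rangle = \partial_i f + \vartheta\,\partial_i\!\left(\frac{u^q}{|x|^p}\right),
\]
with $a_{jk}(\eta)=A(|\eta|)\delta_{jk}+A'(|\eta|)\eta_j\eta_k/|\eta|$. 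By \eqref{1.10} the eigenvalues of $a_{jk}(\eta)$ are comparable to $A(|\eta|)$ uniformly for $\eta\neq 0$, while \eqref{crescitaB'} bounds the zeroth-order-in-$u_i$ contribution by $\hat{C}A(|\nabla u|)|\nabla u_i|$, which will be absorbed via Young's inequality.

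As test function I would take
\[
\psi_{\varepsilon}=\frac{G_{\varepsilon}(u_i)\,\varphi^2}{|x-y|^{\gamma}},
\]
where $G_{\varepsilon}$ is a smooth odd truncation of $s\mapsto \mathrm{sign}(s)/|s|^{\beta}$ (for instance a regularization of $s/(\varepsilon^2+s^2)^{(1+\beta)/2}$, cut off so that $G_{\varepsilon}$ and $G_{\varepsilon}'$ both vanish on $\Zueps$), and $\varphi$ is a smooth cutoff supported in a slightly enlarged neighborhood of $E$ and vanishing on a small ball around $\{0\}$; the weight $|x-y|^{-\gamma}$ should be simultaneously regularized as $(|x-y|^2+\delta)^{-\gamma/2}$ in order to make $\psi_\varepsilon$ admissible. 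Plugging $\psi_{\varepsilon}$ into the weak form produces the positive principal term
\[
\int \frac{A(|\nabla u|)\,|\nabla u_i|^2\,G_{\varepsilon}'(u_i)\,\varphi^2}{|x-y|^{\gamma}}\,dx,
\]
which, by construction of $G_{\varepsilon}$, controls a positive multiple of the target integrand after passing to the limits $\delta,\varepsilon,\bar\varepsilon\to 0$.

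The hard part will be absorbing the error generated by $\nabla(|x-y|^{-\gamma})$: after Cauchy--Schwarz this produces a contribution of the form
\[
C\int \frac{A(|\nabla u|)\,|\nabla u|^2\,|G_{\varepsilon}(u_i)|^2\,\varphi^2}{|x-y|^{\gamma+2}}\,dx,
\]
which must be dominated by a small fraction of the good term plus a controlled constant. This is achieved via a weighted Hardy inequality in $\Real^N$, and is precisely where the restriction $\gamma<N-2$ (for $N\geq 3$) enters; the constant $\mathcal{C}$ picks up here its dependence on $\gamma$, $m_A$, $M_A$, $\beta$ and $\|\nabla u\|_\infty$. The remaining contributions are benign: the $\nabla\varphi$-term is supported where $|\nabla u|\geq \bar\varepsilon$ and is bounded using $\|\nabla u\|_\infty$ and the distance of $E$ from $\{0\}\cup\partial\Om$, while the right-hand side uses $f\in W^{1,\infty}(\bar\Om)$ together with the smoothness of $\vartheta u^q/|x|^p$ away from the origin. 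Passing to the limit first as $\delta,\varepsilon\to 0$ and then as $\bar\varepsilon\to 0$ via Fatou's lemma yields the claimed local bound, uniformly in $y\in E$ because the constants only depend on $y$ through $\mathrm{dist}(E,\{0\}\cup\partial\Om)$.

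For the global estimate, I would repeat the computation with a cutoff $\varphi$ allowed to reach $\partial\Om$, vanishing only near $\{0\}\cup\Zueps$. The additional hypotheses that $\Om$ is smooth and $f\geq 0$ enable a Hopf-type boundary lemma for \eqref{ProbPuntuale}, forcing $|\nabla u|>0$ in a uniform tubular neighborhood of $\partial\Om$ and thus keeping $Z_u$ uniformly separated from $\partial\Om$. The $C^{1,\alpha}$-regularity up to $\partial\Om\setminus\{0\}$ then ensures that the boundary integrals arising from the integration by parts are either absent (since $u=0$ and the tangential cutoff can be arranged to vanish there) or controlled by the Lipschitz bound on $\nabla u$; a finite covering of $\partial\Om$ by coordinate patches centered away from the origin, combined with the interior estimate, yields the stated global bound.
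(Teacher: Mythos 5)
Your overall strategy matches the paper's: differentiate the PDE to obtain the linearized equation for $u_i$, test with a function built from regularized weights $|u_i|^{-\beta}$ and $|x-y|^{-\gamma}$ times a spatial cutoff, isolate a positive principal term, absorb the remaining contributions with Young's inequality and the boundedness of $\|\nabla u\|_\infty$, pass to the limit via Fatou, cover $E$ by finitely many balls, and use Hopf to push the estimate to the boundary when $f\geq 0$. So far so good.

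However, there is a genuine error in the construction of the test function, and it is not cosmetic. You take $\psi_\varepsilon = G_\varepsilon(u_i)\varphi^2/|x-y|^\gamma$ with $G_\varepsilon$ a smooth odd truncation of $s\mapsto \operatorname{sign}(s)/|s|^\beta$ (your suggested model is $s/(\varepsilon^2+s^2)^{(1+\beta)/2}$). For such a $G_\varepsilon$ one has, away from the truncation region, $G_\varepsilon'(s)\approx -\beta|s|^{-\beta-1}$, which is \emph{negative} and carries the power $|s|^{-\beta-1}$, not $|s|^{-\beta}$. Consequently the "principal term" you display,
\[
\int \frac{A(|\nabla u|)\,|\nabla u_i|^2\,G_{\varepsilon}'(u_i)\,\varphi^2}{|x-y|^{\gamma}}\,dx,
\]
is (essentially) nonpositive and, even in magnitude, does not control the target integrand $A(|\nabla u|)|\nabla u_i|^2/(|x-y|^\gamma|u_i|^\beta)$. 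What is needed is a primitive of $|s|^{-\beta}$, i.e. a truncation of $s\mapsto \operatorname{sign}(s)|s|^{1-\beta}$, so that its derivative is a positive multiple of $|s|^{-\beta}$ for $\beta<1$. This is precisely what the paper does: it sets $T_\varepsilon(t)=G_\varepsilon(t)/|t|^\beta$ with $G_\varepsilon$ an odd truncation of the \emph{identity} (so $T_\varepsilon(t)\approx \operatorname{sign}(t)|t|^{1-\beta}$ for $|t|\ge 2\varepsilon$, and $T_\varepsilon'(t)\approx(1-\beta)|t|^{-\beta}>0$). Relatedly, the intermediate "bad" term you write, $\int A|\nabla u|^2|G_\varepsilon(u_i)|^2\varphi^2/|x-y|^{\gamma+2}$, has $|G_\varepsilon(u_i)|^2\sim|u_i|^{-2\beta}$ under your definition, which is unbounded as $u_i\to 0$ and neither absorbable into the good term (that carries only $|u_i|^{-\beta}$) nor integrable in terms of the stated constants; with the correct choice $T_\varepsilon(u_i)\sim |u_i|^{1-\beta}$ this factor becomes $|u_i|^{2-2\beta}\le\|\nabla u\|_\infty^{2-2\beta}$ and the term is bounded by $C\int|x-y|^{-(\gamma+2)}$, which is where $\gamma<N-2$ enters (no weighted Hardy inequality is actually needed; direct Young plus integrability suffices, as in the paper). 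Fix the truncation and the rest of your scheme goes through essentially as in the paper's proof.
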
\ \\
Here the set $Z_u$ represents the set of critical points of $u$ when $p>2$ or the set of degenerate point for $A$ when $p\in (1,2)$.\ \\
As a corollary of this result we will prove that the Lebesgue measure of $Z_u$ is zero, as in the natural case of $p-$Laplacian.
\begin{thm}\label{thm1.4}
	Let $u\in C^{1,\alpha}(\Omega\setminus \{0\})\cap C^{2}(\Omega\setminus (Z_u \cup \{0\}))$ be a solution to \eqref{ProbPuntuale} with $f\in W^{1,\infty}(\Omega)$ and 
	$f(x)>0$ in $B_{2 \rho}(x_0) \subset \Omega \setminus \{0\}$.  Then 
	\begin{equation*}
		\int_{B_{\rho}(x_0)} \frac{1}{(A(|\nabla u|))^{\alpha r}} \frac{1}{|x-y|^\gamma} dx\le \mathcal{C}
	\end{equation*}
	for any $y\in B_{\rho}(x_0)$, with $\displaystyle \alpha:=\frac{p-1}{p-2}$ if $p>2$ and $\displaystyle \alpha:=\frac{m_A+1}{m_A}$ if $p\in (1,2)$, $r\in (0,1)$, $\gamma < N-2$ if $N \ge 3$, $\gamma=0$ if $N=2$ and $$\mathcal{C}=\mathcal{C}(\gamma, \eta, h, ||\nabla u||_\infty, \rho, x_0, \alpha, M_A,c_2,\tau, \hat{C}).$$

	\noindent If $\Om$ is a smooth domain and $f$ is nonnegative in $\Om$
	\begin{equation*}
	\int_{\tOm} \frac{1}{(A(|\nabla u|))^{\alpha r}} \frac{1}{|x-y|^\gamma}dx \le \mathcal{C}.
	\end{equation*}
	where $\tOm \Subset \Om \setminus \{0\}$ and $y\in \tOm$.
\end{thm}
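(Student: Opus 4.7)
My plan is to derive Theorem~\ref{thm1.4} from Theorem~\ref{thm1.3} via a H\"older interpolation. The strict positivity of $f$ on $B_{2\rho}(x_0)$ provides a quantitative non-degeneracy mechanism for $|\nabla u|$, the structural bounds~\eqref{1.10} control the growth of $A$ near the origin, and H\"older's inequality transports the weighted $H^1$-estimate of Theorem~\ref{thm1.3} on a partial derivative $u_i$ into the sought integrability of $A(|\nabla u|)^{-\alpha r}$. I focus on the local bound and extend to the global statement at the end.

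Fix $y\in B_\rho(x_0)$ and, shrinking $\rho$ if necessary, assume $f\geq \eta>0$ on $B_{2\rho}(x_0)$. First I select a coordinate direction $i\in\{1,\dots,N\}$ and a subset $E_i\subset B_\rho(x_0)$ of quantitatively positive measure on which $u_i$ has a definite sign: if no such direction existed, then $\nabla u\equiv 0$ a.e.\ on $B_\rho(x_0)$, contradicting the equation since $\int_{B_\rho} f\geq \eta|B_\rho|>0$. A pigeonhole partition of $B_\rho(x_0)$ according to the signs and magnitudes of $u_1,\dots,u_N$ makes this quantitative, with $|E_i|$ bounded below in terms of $\eta,\vartheta,\|u\|_\infty,\|\nabla u\|_\infty$ and $\rho$.

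Applying Theorem~\ref{thm1.3} to this $i$ with parameters $\beta\in[0,1)$ (to be tuned) and the prescribed $\gamma<N-2$, I obtain
\begin{equation*}
\int_{E_i\setminus Z_u}\frac{A(|\nabla u|)\,|\nabla u_i|^2}{|x-y|^{\gamma}\,|u_i|^{\beta}}\,dx \leq C.
\end{equation*}
I then factor the target integrand by H\"older's inequality with conjugate exponents $1/r$, $1/(1-r)$:
\begin{equation*}
\frac{1}{A(|\nabla u|)^{\alpha r}\,|x-y|^{\gamma}} = \left[\frac{A(|\nabla u|)\,|\nabla u_i|^2}{|x-y|^{\gamma}\,|u_i|^{\beta}}\right]^{r}\cdot R(x)^{1-r}.
\end{equation*}
The choice $\alpha=(p-1)/(p-2)$ for $p>2$ (resp.\ $\alpha=(m_A+1)/m_A$ for $p\in(1,2)$) is calibrated precisely so that, after invoking the comparability $A(t)\sim t^{m_A}$ near $0$ from~\eqref{1.10}, the power of $A$ inside $R$ vanishes; the remaining factors in $R$ (namely $|u_i|$ and the singular weight $|x-y|^{-\gamma}$) are controlled by $\|\nabla u\|_\infty$ and the $L^1$-integrability of $|x-y|^{-\gamma}$ for $\gamma<N$. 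The residual power of $|\nabla u_i|$ in $R$ is absorbed by re-applying Theorem~\ref{thm1.3} with a shifted $\beta$, or equivalently by testing the differentiated equation for $u_i$ against a suitable weighted cutoff.

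The main obstacle is the degenerate regime $p\in(1,2)$, where $A(|\nabla u|)\to+\infty$ on the critical set: the hypothesis $m_A>-1$ in~\eqref{1.10} is essential to keep the H\"older dual exponent $1/(1-r)$ admissible, which is why $\alpha$ must be rewritten as $(m_A+1)/m_A$ in this case. The global bound over $\tOm\Subset\Omega\setminus\{0\}$ then follows by a finite cover by balls, using the $C^{1,\alpha}$ regularity of $u$ up to $\partial\Omega$ together with the Hopf-type non-degeneracy of $|\nabla u|$ near $\partial\Omega$ (guaranteed by $f\geq 0$ and the smoothness of $\Omega$) to ensure that the constants remain uniform across the cover.
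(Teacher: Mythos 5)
Your overall reading of the role of $f>0$ is on the right track — it is indeed what makes the estimate possible — but the mechanism you propose (H\"older interpolation starting from Theorem \ref{thm1.3}) does not work, and it is not the paper's argument.

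The decisive flaw is in the factorization. If you write
\begin{equation*}
\frac{1}{A(|\nabla u|)^{\alpha r}\,|x-y|^{\gamma}} = \left[\frac{A(|\nabla u|)\,|\nabla u_i|^2}{|x-y|^{\gamma}\,|u_i|^{\beta}}\right]^{r}\,R(x)^{1-r},
\end{equation*}
then necessarily
\begin{equation*}
R(x) = \frac{|u_i|^{\beta r/(1-r)}}{A(|\nabla u|)^{(\alpha+1)r/(1-r)}\,|\nabla u_i|^{2r/(1-r)}\,|x-y|^{\gamma}},
\end{equation*}
and this contains a \emph{negative} power of $|\nabla u_i|$, i.e.\ a negative power of the second derivatives of $u$. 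Neither Theorem \ref{thm1.3} nor any other estimate in the paper controls $\int |\nabla u_i|^{-q}\,dx$ for $q>0$; indeed second derivatives typically vanish on large sets and there is no a priori lower bound. Your remark that this factor ``is absorbed by re-applying Theorem \ref{thm1.3} with a shifted $\beta$'' cannot be made precise: shifting $\beta$ changes the power of $|u_i|$, not of $|\nabla u_i|$, and certainly never flips its sign. The pigeonhole step selecting a direction $i$ and a set $E_i$ where $u_i$ has definite sign is also misdirected: you need the integral over all of $B_\rho(x_0)$, and restricting to $E_i$ (which is not the whole ball) leaves the rest uncontrolled; moreover $f>0$ does not give a quantitative lower bound on $|E_i|$ of the kind you invoke.

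The actual proof in the paper is of a different nature. One does \emph{not} interpolate from Theorem \ref{thm1.3}; one tests the equation \eqref{ProbPuntuale} itself with
\begin{equation*}
\phi = \frac{H_\delta(|x-y|)\,\varphi_\rho^2}{(\varepsilon + A(|\nabla u|))^{\alpha r}},
\end{equation*}
so that the positive source $h(x,u)\geq c_h(\rho)>0$ produces the target integral on the left-hand side, while the right-hand side consists of four terms $I_1,I_2,I_3,I_B$. These are then bounded by Young's inequality (absorbing small multiples of the target integral), by splitting $B_{2\rho}$ into the regions $\{A\gtrless 1\}$ (for $p>2$) or into $\Zueps$ and its complement (for $p\in(1,2)$), and by invoking Theorem \ref{thm1.3} for the one term ($I_1$, and $I_B$) that genuinely involves second derivatives. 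So Theorem \ref{thm1.3} enters only to bound an \emph{auxiliary} quadratic second-derivative quantity, not through a direct H\"older interpolation towards $A^{-\alpha r}$. You should redo the proof along these lines; the key missing idea in your proposal is to test the PDE with a weight built from $(\varepsilon+A(|\nabla u|))^{-\alpha r}$ rather than to try to manufacture $A^{-\alpha r}$ out of the second-derivative estimate.
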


These results can be framed in the research topic introduced in \cite{DamSciu}; here the authors proved summability properties for the gradient and second derivative for positive solutions of $p-$Laplacian equations. The aim is obtain Sobolev and Poincar\'{e} type inequalities in weighted Sobolev spaces
with weight $A(|\nabla u|)$  and to apply them to the study of monotonicity and symmetry of solutions based on moving plane method.
Since then, these ideas have been exploited, developed and detailed in many papers as \cite{CanDeGS, EspST,MontSciuPM} and much more.
Our results can be easily compared with those in \cite{CanDeGS, EspST}. 

\noindent From Theorem \ref{thm1.3} and \ref{thm1.4}, we obtain this result:

\begin{thm}
	Let $\Om$ be a smooth domain, $u\in C^1(\bar{\Om} \setminus \{0\})$ be a weak solution to \eqref{ProbPuntuale}, $f \in W^{1,\infty}(\bar{\Om})$ and $f$ nonnegative function. Then if $1<p<3$, $u_{i} \in W^{1,2}_{loc}(\Om \setminus \{0\})$, while if $p\geq 3$ then $u_{i} \in W^{1,q}_{loc}(\Om \setminus \{0\})$, for every $i=1,...,N$ and for every $\displaystyle q< \frac{p-1}{p-2}$. Moreover the generalized derivatives of $u_{i}$ coincide with the classical ones, both denoted with $u_{ij}$ almost everywhere in $\Om$.
\end{thm}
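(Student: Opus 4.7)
The proof combines the a priori integral bounds of Theorems~\ref{thm1.3} and~\ref{thm1.4} through a carefully chosen H\"older splitting, and then identifies the classical second derivatives $u_{ij}$ (defined on $\Om\setminus(Z_u\cup\{0\})$) with the distributional derivatives of $u_i$ on $\Om\setminus\{0\}$.

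When $1<p\leq 2$, the polynomial growth forced by~\eqref{1.10} together with $u\in C^1(\bar{\Om}\setminus\{0\})$ gives $A(|\nabla u|)\geq A(\|\nabla u\|_\infty)>0$, and Theorem~\ref{thm1.3} with $\beta=\gamma=0$ immediately yields
\[
\int_E |\nabla u_i|^2\,dx \;\leq\; \frac{1}{A(\|\nabla u\|_\infty)} \int_{E\setminus Z_u} A(|\nabla u|)|\nabla u_i|^2\,dx \;<\;\infty
\]
for every $E\Subset \Om\setminus\{0\}$.

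For $p>2$, I fix $\beta\in [0,1)$ and use the pointwise factorization
\[
|\nabla u_i|^q \;=\; \Bigl[\tfrac{A(|\nabla u|)|\nabla u_i|^2}{|u_i|^\beta}\Bigr]^{q/2}\cdot\Bigl[\tfrac{|u_i|^\beta}{A(|\nabla u|)}\Bigr]^{q/2}.
\]
For $q\in[1,2)$, H\"older's inequality with conjugate exponents $2/q$ and $2/(2-q)$ controls the first factor by Theorem~\ref{thm1.3}; exploiting $|u_i|\leq |\nabla u|$ and the polynomial behavior $A(t)\sim t^{p-2}$ coming from~\eqref{1.10}, the second factor is dominated by $\int_E |\nabla u|^{(\beta-(p-2))q/(2-q)}\,dx$, which Theorem~\ref{thm1.4} makes finite whenever
\[
((p-2)-\beta)\,\frac{q}{2-q} \;<\; p-1, \qquad\text{i.e.,}\qquad q \;<\; \frac{2(p-1)}{2p-3-\beta}.
\]
Sending $\beta\to 1^{-}$ opens the full range $q<(p-1)/(p-2)$, yielding the claim for $p\geq 3$. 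When $2<p<3$ one has $(p-1)/(p-2)>2$; the endpoint $q=2$ is reached by picking instead $\beta=p-2\in[0,1)$, so that $|u_i|^\beta/A(|\nabla u|)\lesssim |\nabla u|^{\beta-(p-2)}=1$ is uniformly bounded and $\int_E |\nabla u_i|^2\,dx<\infty$ follows from Theorem~\ref{thm1.3} applied with this $\beta$.

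It remains to identify $u_{ij}$ with the weak derivative of $u_i$ on $\Om\setminus\{0\}$. Since $Z_u\cup\{0\}$ has zero Lebesgue measure (the corollary of Theorem~\ref{thm1.4} already recorded in the paper), I choose a cutoff $\eta_\epsilon\in C_c^\infty(\Om\setminus(Z_u\cup\{0\}))$ with $\eta_\epsilon\to \mathbf{1}$ pointwise and $|\nabla \eta_\epsilon|$ supported on a set of small measure. Integration by parts against $\varphi\in C_c^\infty(\Om\setminus\{0\})$ on $\mathrm{supp}\,\eta_\epsilon$ followed by $\epsilon\to 0$ produces the weak derivative identity: the interior terms converge by dominated convergence from the $L^q$-estimate above, and the cutoff remainder $\int u_i\,\varphi\,(\eta_\epsilon)_j\,dx$ vanishes because $u_i\in L^\infty$ and $\nabla \eta_\epsilon$ is concentrated on a negligible set. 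The global statement (smooth $\Om$, $f\geq 0$) follows identically from the global versions of Theorems~\ref{thm1.3}--\ref{thm1.4}. The main technical difficulty lies precisely in justifying this last limit, for which the sharp integrability of $1/A(|\nabla u|)$ from Theorem~\ref{thm1.4} is essential.
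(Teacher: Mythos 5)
Your overall architecture matches the paper's: a H\"older splitting of $|\nabla u_i|^q$ into a factor controlled by Theorem~\ref{thm1.3} and a factor involving the reciprocal of the weight, controlled by Theorem~\ref{thm1.4}, combined with the Lebesgue-negligibility of $Z_u\cup\{0\}$. The intermediate cases also line up: for $1<p<3$ the paper picks $\beta>p-2$ so that the weight $|\nabla u|^{p-2-\beta}$ is bounded below, which is essentially your $\beta=p-2$ endpoint trick plus the trivial $p\le 2$ observation. (Minor imprecision there: $A$ need not be monotone, so ``$A(|\nabla u|)\ge A(\|\nabla u\|_\infty)$'' is not a consequence of the hypotheses; what you actually need and have is $A(|\nabla u|)\ge c_2|\nabla u|^{p-2}\ge c_2\|\nabla u\|_\infty^{p-2}>0$ from \eqref{D20}.)

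The genuine gap is in the $p\ge 3$ branch, where you pass from the $A$-weighted integral to a pure power of $|\nabla u|$ and then invoke Theorem~\ref{thm1.4}. You replace $A(|\nabla u|)$ by $|\nabla u|^{p-2}$ \emph{in both directions}, i.e.\ you use a two-sided equivalence $A(t)\sim t^{p-2}$, but the hypotheses only give the one-sided bound $A(t)\ge c_2 t^{p-2}$ (i.e.\ \eqref{D20}). Condition \eqref{1.10} yields $A(t)\le A(1)\max\{t^{m_A},t^{M_A}\}$ with possibly $m_A<p-2$, so $A$ need not be controlled above by $t^{p-2}$; in that case the claim ``Theorem~\ref{thm1.4} makes $\int_E|\nabla u|^{-s}\,dx$ finite for $s<p-1$'' does not follow from $\int_E A(|\nabla u|)^{-\alpha r}\,dx<\infty$, since that would require $A(t)^{\alpha r}\lesssim t^s$, an \emph{upper} bound on $A$. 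The paper's proof dodges this precisely: in the second H\"older factor it uses $|\nabla u|\le (A(|\nabla u|)/c_2)^{1/(p-2)}$ to bound the numerator and keeps $A$ in the denominator, arriving at $\int A(|\nabla u|)^{-\frac{q}{2-q}(1-\frac{\eta}{p-2})}\,dx$ to which Theorem~\ref{thm1.4} applies directly, yielding the same condition $q<\frac{2(p-1)}{2p-3-\eta}$. Your final inequality coincides with the paper's, but the intermediate step is only valid in the model case $A(t)=t^{p-2}$; in the general quasilinear setting you must keep the weight $A$ intact.

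A secondary weakness is the identification of weak and classical second derivatives: the argument that the cutoff remainder $\int u_i\varphi\,(\eta_\epsilon)_j\,dx$ vanishes ``because $\nabla\eta_\epsilon$ is concentrated on a negligible set'' is not valid as stated, since $|\nabla\eta_\epsilon|$ can blow up on that set; one needs the quantitative smallness coming from the integral estimate on $u_{ij}$, as in the argument of Damascelli--Sciunzi that the paper cites. This is a gap of rigor rather than of idea, but it is exactly the point the paper flags as the technical content of this step.
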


\noindent The additional hypothesis \eqref{1.10} is well known and we can find it, for instance, in \cite{CanDeGS, Cia11, Cia14, Cia18,  EspST}. Let us remark that our setting  \eqref{D10}-\eqref{D2}-\eqref{1.10} on $A(t)$ is satisfied, as for example, by $A(t)=t^{p-2}+bt^{q-2}$ with $1<q<p$ and $b>0$; 
this particular choice for the operator $A(t)$ finds application to the study of double-phase equations
$$
-\mbox{div}\left(|\nabla u|^{p-2}\nabla u+b|\nabla u|^{q-2}\nabla u\right)+B(|\nabla u|) = \vartheta \frac{u^q}{|x|^p}+f \qquad \mbox{ in } \Omega.\\
$$

\section{Preliminaries}
In this section, briefly, we enclose some definitions, results and remarks that it will be useful in the rest of the paper. From now on, in order to get a readable notation, generic numerical constant will be denoted by $c$ and will be allowed to vary within a single line or formula. Moreover, Moreover we denote with $f^+:=\max\{f,0\}$ and $f^-:=\min\{f,0\}$.

\begin{rem}
	\noindent With respect to the setting \eqref{crescenzaA}-\eqref{crescitaB'}, we observe that:
	
	\begin{itemize}
		\item 	If $\eta'=0$, the inequality \eqref{D2} becomes
		\begin{align}
		& A( |\eta|)|\eta|^2 \geq c_2|\eta|^p \label{D20}.
		\end{align}
		
		\noindent Moreover, if $\eta,\eta' \in \mathbb{R}^N$, since $ |\eta-\eta '|\leq |\eta|+|\eta '|$, by 
		\eqref{D2} it follows
		\begin{equation}\label{Dp2}
		[A(|\eta|)\eta-A(|\eta'|)\eta']\cdot[\eta-\eta ']\geq c_2 |\eta-\eta '|^p \qquad\mbox{if}\quad p\geq 2.
		\end{equation} 
		
		\item By \eqref{crescenzaA}, we get that 
		\begin{equation} \label{tA lim}
		\lim\limits_{t \rightarrow 0^+} tA(t) < +\infty.
		\end{equation}
		
		\noindent Then, if $0<t<K$ there exists a constant $C_K:=C(K)>0$ such that 
		\begin{equation}\label{tA<K}
		|tA(t)| \leq C_K.
		\end{equation}

		\item  By \eqref{crescenzaA}, \eqref{B0} and \eqref{crescitaB'}, we have that for $\xi \in [0,t]$ 
		\begin{equation}\label{stimaBalto}
		B(t) = \int_0^t B'(s)\,ds \leq  \hat{C}t\xi A(\xi)\leq \hat{C}t^2A(t).
		\end{equation} 
	\end{itemize}
\end{rem}

\vskip 15pt

\vskip 10pt

\begin{defn}
	We say that $u$ is a weak supersolution to Problem \eqref{P}
	if $u\in W_0^{1,p}(\Omega)$ and 
	\begin{equation*}
	\begin{split}
	\int_{\Omega} A(|\nabla u|)(\nabla u, \nabla \varphi )\,dx +\int_{\Om} B(|\nabla u|)\varphi\,dx  & \geq  \, \vartheta \int_{\Om} \frac{u^q}{|x|^p}\varphi\,dx  + \int_{\Om} f \varphi\,dx  \\ & \ \forall
	\varphi \in W_0^{1,p}(\Om) \cap L^{\infty}(\Omega), \  \varphi \geq 0. 
	\end{split}
	\end{equation*}
	We say that $u$ is a weak subsolution
	if $u\in W_0^{1,p}(\Omega)$ and 
	\begin{equation*}
	\begin{split}
	\int_{\Omega} A(|\nabla u|)(\nabla u, \nabla \varphi )\,dx  +\int_{\Om} B(|\nabla u|)\varphi \,dx  &\leq  \ \vartheta \int_{\Om} \frac{u^q}{|x|^p}\varphi \,dx + \int_{\Om} f \varphi \,dx  \\ & \ \forall
	\varphi \in W_0^{1,p}(\Om) \cap L^{\infty}(\Omega), \  \varphi \geq 0. 
	\end{split}
	\end{equation*}
\end{defn}

\noindent A main tool will be the following

\begin{lem}\label{DHardy}(Hardy-Sobolev inequality)
	Suppose $1<p<N$ and $u\in W^{1,p}(\Real ^N)$. Then we have
	$$\int_{\Real^N} \frac{|u|^p}{|x|^p}\,dx \leq C_{N,p}\int_{\Real^N} |\nabla u|^p\,dx $$
	with $C_{N,p}=\left(\frac{p}{N-p}\right)^p$ optimal and not achieved constant.
\end{lem}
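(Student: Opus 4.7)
The plan is to establish the inequality first for test functions and then extend by density, with the constant's sharpness obtained by an explicit minimizing sequence.

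\textbf{Step 1 (Reduction to smooth functions).} First I would work with $u\in C_c^\infty(\mathbb{R}^N\setminus\{0\})$. Since $1<p<N$, the singleton $\{0\}$ has zero $W^{1,p}$-capacity, so $C_c^\infty(\mathbb{R}^N\setminus\{0\})$ is dense in $W^{1,p}(\mathbb{R}^N)$ with respect to both sides of the inequality, modulo a standard truncation and mollification argument.

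\textbf{Step 2 (Divergence identity and integration by parts).} The key computation is the elementary identity, valid for $x\neq 0$,
$$\operatorname{div}\!\left(\frac{x}{|x|^p}\right)=\frac{N-p}{|x|^p}.$$
Multiplying by $|u|^p$ and integrating over $\mathbb{R}^N$, an integration by parts (legitimate because $u$ is compactly supported away from the origin) gives
$$(N-p)\int_{\mathbb{R}^N}\frac{|u|^p}{|x|^p}\,dx=-p\int_{\mathbb{R}^N}|u|^{p-2}u\,\frac{x\cdot\nabla u}{|x|^p}\,dx.$$
Estimating the right-hand side by $|x\cdot\nabla u|\leq|x|\,|\nabla u|$ and applying H\"older's inequality with conjugate exponents $p$ and $p/(p-1)$ yields
$$(N-p)\int_{\mathbb{R}^N}\frac{|u|^p}{|x|^p}\,dx\le p\left(\int_{\mathbb{R}^N}\frac{|u|^p}{|x|^p}\,dx\right)^{\!(p-1)/p}\!\left(\int_{\mathbb{R}^N}|\nabla u|^p\,dx\right)^{\!1/p}.$$
Dividing both sides by the first factor on the right and raising to the $p$-th power gives exactly the claimed inequality with constant $C_{N,p}=(p/(N-p))^p$.

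\textbf{Step 3 (Density extension).} Given $u\in W^{1,p}(\mathbb{R}^N)$, I would approximate it by $\varphi_\varepsilon\in C_c^\infty(\mathbb{R}^N\setminus\{0\})$ in $W^{1,p}$ norm; Fatou's lemma on the left-hand side combined with continuity of $\|\nabla\cdot\|_p$ on the right transfers the inequality to all of $W^{1,p}(\mathbb{R}^N)$.

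\textbf{Step 4 (Sharpness and non-attainment).} For optimality, the standard trial functions $u_\varepsilon(x)=(\varepsilon+|x|^{p/(p-1)})^{-(N-p)/p}\chi(x)$ (where $\chi$ is a cutoff) give a minimizing sequence for the ratio as $\varepsilon\to 0$. The main obstacle is showing non-attainment: if equality held for some $u\in W^{1,p}(\mathbb{R}^N)$, then equality must hold in both the Cauchy--Schwarz-type bound $|x\cdot\nabla u|\le|x|\,|\nabla u|$ and in H\"older, forcing $u$ to be radial and proportional to $|x|^{-(N-p)/p}$; but this function is not in $W^{1,p}(\mathbb{R}^N)$, a contradiction. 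This is the technically delicate step, but the statement (for the authors' purposes) only needs the inequality itself, so it is essentially an invocation of the classical Hardy result.
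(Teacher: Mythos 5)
The paper states Lemma~2.3 as a classical fact and provides no proof, so there is no internal argument to compare against; you have supplied what the paper omitted. Your Steps~1--3 constitute the standard textbook proof of the Hardy inequality with the sharp constant: the divergence identity $\operatorname{div}(x/|x|^p)=(N-p)/|x|^p$, integration by parts for $u\in C_c^\infty(\mathbb{R}^N\setminus\{0\})$, Cauchy--Schwarz and H\"older to absorb the singular weight, then density of $C_c^\infty(\mathbb{R}^N\setminus\{0\})$ in $W^{1,p}(\mathbb{R}^N)$ for $1<p<N$ (a point has zero $p$-capacity) plus Fatou to close. This is all correct, and the chain of inequalities produces exactly $C_{N,p}=(p/(N-p))^p$.

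One small caveat on Step~4: the trial functions you wrote, $(\varepsilon+|x|^{p/(p-1)})^{-(N-p)/p}$, are the Aubin--Talenti extremals for the \emph{Sobolev} inequality, not a natural minimizing family for Hardy. For Hardy the would-be extremal is $|x|^{-(N-p)/p}$ itself, which barely fails to be in $W^{1,p}(\mathbb{R}^N)$, and the usual minimizing sequence is obtained by perturbing this profile, e.g.\ $u_\varepsilon(x)=|x|^{-(N-p)/p+\varepsilon}\eta(|x|)$ with a fixed cutoff $\eta$, or a truncation $\min(|x|,1)^{-(N-p)/p+\varepsilon}$. Your non-attainment argument (equality in both Cauchy--Schwarz and H\"older forces $u$ proportional to $|x|^{-(N-p)/p}$, which is not admissible) is the right reasoning. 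Since you explicitly flag that the paper only needs the inequality and treats sharpness as classical, this does not affect the substance of the proof, but the specific trial family should be replaced if you were to actually carry out the computation.
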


\section{Existence of an energy solution}
\subsection{Existence of solutions to the truncated problem.}\ \\
\noindent In this subsection, we are going to study the existence of a solution $u_k \in W_0^{1,p}(\Omega)$ to the truncated problem
\begin{equation}\label{P1_k} \displaystyle
-\mbox{div}(A(|\nabla u_k|)\nabla u_k)+B(|\nabla u_k|) = \vartheta T_k\left(\frac{u_k^q}{|x|^p}\right)+T_k(f) \quad \mbox{ in } \Omega,
\end{equation}

\noindent where 
\begin{equation}\label{deftroncata}
T_k(s)=\max\{\min\{k,s\},-k\}, \quad k>0.
\end{equation}

\noindent We observe that,  since \eqref{B0} holds, $\phi \equiv 0$ is a subsolution to \eqref{P1_k}. \\
\indent By \cite[Theorem 4.1]{BocCroce} and by classical Stampacchia argument, there exists a solution $\psi \in W_0^{1,p}(\Omega) \cap L^{\infty}(\Om)$ to the following problem
\begin{equation}
\begin{cases}\displaystyle
-\mbox{div}(A(|\nabla \psi|)\nabla \psi)= \vartheta k+T_k(f) &\mbox{ in } \Omega,\\
\psi=0 &\mbox{ on } \partial \Omega,
\end{cases}
\end{equation}

\noindent that turns to be a supersolution to the problem \eqref{P1_k}.
In fact, let $\varphi \in W_0^{1,p}(\Om) \cap L^{\infty}(\Omega) \mbox{ and } \varphi \geq 0$, we have
\begin{eqnarray*}
\displaystyle
\int_{\Om} A(|\nabla \psi|)(\nabla \psi , \nabla \varphi)\,dx +\int_{\Om} B(|\nabla \psi|)\varphi\,dx&\geq&\int_{\Om} A(|\nabla \psi|)(\nabla \psi , \nabla \varphi)\,dx\\
= \vartheta \int_{\Om}k\varphi \,dx + \int_{\Om} T_k(f)\varphi \,dx &\geq& \vartheta \int_{\Om}T_k\left(\frac{\psi^q}{|x|^p}\right) \varphi\,dx + \int_{\Om} T_k(f)\varphi\,dx
\end{eqnarray*}

In order to prove the existence of solutions to the problem \eqref{P} is useful to consider a sequence of approximated problems. We take as starting point $w_0=0$ and consider iteratively the problems

\begin{equation}\label{Pniterato}
\begin{cases} \displaystyle
-\mbox{div}\left(A(|\nabla w_n|)\nabla w_n\right)+\frac{B(|\nabla w_n|)}{1+\frac{1}{n}B(|\nabla w_n|)} = \vartheta T_k\left(\frac{w_{n-1}^q}{|x|^p}\right)+T_k(f) &\mbox{ in } \Omega,\\
w_n=0 & \mbox{ on } \partial \Omega.
\end{cases}
\end{equation}

\noindent We observe that $\phi \equiv 0$ and the supersolution $\psi$ to problem \eqref{P1_k} are respectively subsolution and supersolution to the problem \eqref{Pniterato}.

\noindent Then, using the same argument in \cite[Theorem 2.1]{BocMur}, it can be proved the following: 
\begin{prop}\label{Solproblemait}
	There exists $w_n \in W_0^{1,p}(\Omega) \cap L^{\infty}(\Omega)$ solution of the problem \eqref{Pniterato}. Moreover, $0 \leq w_n \leq \psi$ \ $\forall n \in \mathbb{N}$.
\end{prop}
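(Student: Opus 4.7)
The plan is to argue by induction on $n$, with base case $w_0 \equiv 0$ trivial. For the inductive step, assume $w_{n-1}\in W_0^{1,p}(\Omega) \cap L^\infty(\Omega)$ with $0 \leq w_{n-1} \leq \psi$ has already been produced. The point of the $1/n$-regularization is that
\[
b_n(\eta) := \frac{B(|\eta|)}{1 + \frac{1}{n}B(|\eta|)}
\]
is a continuous nonnegative Carath\'eodory function bounded by $n$, while the source
\[
g_n(x) := \vartheta T_k\!\left(\frac{w_{n-1}^q}{|x|^p}\right) + T_k(f)
\]
lies in $L^\infty(\Omega)$ with $0 \leq g_n \leq (\vartheta+1)k$. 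Hence \eqref{Pniterato} is a Leray--Lions problem with bounded lower-order perturbation and bounded data, to which the sub/super\-solution existence scheme of \cite[Theorem 2.1]{BocMur} applies.

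I first exhibit admissible sub- and supersolutions. Since $b_n(0) = 0$ and $g_n \geq 0$, the trivial function $\phi \equiv 0$ is a subsolution of \eqref{Pniterato}. For the supersolution, recall that $\psi$ satisfies $-\mbox{div}(A(|\nabla\psi|)\nabla\psi) = \vartheta k + T_k(f)$. Adding the nonnegative term $b_n(\nabla\psi)$ and using $T_k(w_{n-1}^q/|x|^p) \leq k$ yields
\[
-\mbox{div}(A(|\nabla\psi|)\nabla\psi) + b_n(\nabla\psi) \;\geq\; \vartheta k + T_k(f) \;\geq\; g_n,
\]
so $\psi$ is a supersolution of \eqref{Pniterato}. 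Then \cite[Theorem 2.1]{BocMur} delivers $w_n \in W_0^{1,p}(\Omega)$ solving \eqref{Pniterato} and trapped between $0$ and $\psi$ a.e.; membership in $L^\infty(\Omega)$ follows automatically from the upper bound by $\psi$, and the induction step is complete.

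The delicate ingredient inside the Boccardo--Murat machinery is the pseudomonotonicity of the operator $w \mapsto -\mbox{div}(A(|\nabla w|)\nabla w) + b_n(\nabla w)$, since $b_n$ depends on the gradient. Coercivity and boundedness are immediate from \eqref{D20}, \eqref{D10} and $\|b_n\|_\infty \leq n$. For pseudomonotonicity one uses the classical monotonicity trick, based on \eqref{D2} applied to the principal part, to extract an a.e.\ gradient-convergent subsequence from any weakly convergent one in $W_0^{1,p}(\Omega)$; dominated convergence applied to the uniformly bounded $b_n(\nabla w_j)$ then closes the argument and permits passage to the limit. This is precisely where the $1/n$-regularization pays off: without it, $B(|\nabla w|)$ would be only natural-growth in the gradient and no such dominated-convergence step would be available.
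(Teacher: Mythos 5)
Your proposal is correct and follows essentially the same route as the paper: the paper also observes that $\phi \equiv 0$ is a subsolution (by \eqref{B0}) and that the Stampacchia-bounded solution $\psi$ of the unperturbed comparison problem is a supersolution, and then invokes \cite[Theorem~2.1]{BocMur} to conclude; you have simply spelled out the inductive structure on $n$ (which is implicit in the paper since the source of \eqref{Pniterato} depends on $w_{n-1}$), the verification that $b_n$ and $g_n$ are bounded, and a sketch of why the Leray--Lions/Boccardo--Murat machinery applies. The one small inaccuracy is in your closing remark: you suggest the $1/n$-regularization is \emph{needed} to make \cite[Theorem~2.1]{BocMur} applicable because otherwise $B$ would only have natural growth, but that theorem is designed precisely to handle lower-order terms with natural growth in the gradient between ordered $L^\infty$ sub/supersolutions, so existence at this stage would not break down without the regularization. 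The regularization's real payoff appears later, in the compactness argument used to pass $w_n \to u_k$, where the boundedness of $H_n(B(\cdot))$ lets the authors run a uniform-integrability/Vitali argument. This is a side comment, not a gap in your proof of the proposition itself.
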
 

\noindent Next, we prove the following theorem:
\begin{thm}
	There exists a positive solution to the problem 
	\begin{equation*}\displaystyle
	-\operatorname{div} (A(|\nabla u_k|)\nabla u_k)+B(|\nabla u_k|) = \vartheta T_k\left(\frac{u_k^q}{|x|^p}\right)+T_k(f) \quad \mbox{ in } \Omega.
	\end{equation*} 
\end{thm}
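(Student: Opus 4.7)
The plan is to pass to the limit $n\to\infty$ in the sequence $(w_n)$ provided by Proposition \ref{Solproblemait} and identify the limit as the desired $u_k$.

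\textbf{Step 1 (uniform $W^{1,p}$ bound).} Since $0\leq w_n\leq \psi\in L^{\infty}(\Om)$, I can test \eqref{Pniterato} with $w_n$ itself. Dropping the nonnegative $B$-term on the left and using \eqref{D20}, the left-hand side bounds $c_2\|\nabla w_n\|_p^p$ from below, while the right-hand side is at most $(\vartheta+1)k\|\psi\|_\infty|\Om|$. This gives a uniform bound $\|w_n\|_{W_0^{1,p}(\Om)}\leq C$, so up to a subsequence $w_n\rightharpoonup u_k$ in $W_0^{1,p}(\Om)$, $w_n\to u_k$ a.e. and in $L^r(\Om)$ for $r<p^\ast$, and $0\leq u_k\leq \psi$.

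\textbf{Step 2 (a.e.\ convergence of gradients).} This is the main obstacle, because the nonlinearity $B(|\nabla w_n|)/(1+\tfrac{1}{n}B(|\nabla w_n|))$ has natural $p$-growth in the gradient and is not a priori equi-integrable. I would follow the Boccardo--Murat strategy tailored to the bounded setting: exploit the $L^\infty$ bound on $w_n-u_k$ to choose a test function of exponential type, for instance $T_\delta(w_n-u_k)e^{\gamma(w_n-u_k)^+}$ with $\gamma$ large enough that, using \eqref{crescitaB'}--\eqref{stimaBalto}, the derivative of the exponential absorbs the $B$-contribution into the elliptic part. After standard manipulations this yields
$$
\limsup_{n\to\infty}\int_{\Om}\bigl[A(|\nabla w_n|)\nabla w_n-A(|\nabla u_k|)\nabla u_k\bigr]\cdot\nabla(w_n-u_k)\,\chi_{\{|w_n-u_k|\leq \delta\}}\,dx\leq 0,
$$
and then \eqref{D2} (or \eqref{Dp2} when $p\geq 2$) forces $\nabla w_n\to\nabla u_k$ in measure on $\Om$, hence pointwise a.e.\ along a further subsequence.

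\textbf{Step 3 (passage to the limit).} With a.e.\ convergence of the gradients at hand I pass to the limit term by term in \eqref{Pniterato}. The diffusion $A(|\nabla w_n|)\nabla w_n\to A(|\nabla u_k|)\nabla u_k$ in $L^{p'}(\Om)$ by Vitali's theorem, using \eqref{D10} and \eqref{tA<K} together with the uniform $W^{1,p}$-bound to get equi-integrability. The zero-order term converges by dominated convergence since $T_k(w_{n-1}^q/|x|^p)\to T_k(u_k^q/|x|^p)$ a.e.\ and is dominated by $k$. For the nonlinear lower-order term, the integrand $B(|\nabla w_n|)/(1+\tfrac{1}{n}B(|\nabla w_n|))\to B(|\nabla u_k|)$ a.e., and equi-integrability follows from \eqref{stimaBalto} combined with the uniform $W^{1,p}$-bound and \eqref{D10}; Vitali then gives the desired limit when paired with any $\varphi\in W_0^{1,p}(\Om)\cap L^{\infty}(\Om)$. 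Since $w_n\geq 0$, also $u_k\geq 0$, so $u_k\in W_0^{1,p}(\Om)\cap L^\infty(\Om)$ is the required positive weak solution of \eqref{P1_k}.
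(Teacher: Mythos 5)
Your plan follows the standard Boccardo--Murat route (a.e.\ convergence of gradients obtained via a truncated test function, then Vitali to pass to the limit), which is a genuinely different strategy from the paper's. The paper exploits the uniform $L^\infty$-bound $0\leq w_n\leq \psi$ to test directly with the untruncated functions $(w_n-u_k)^+$ and $e^{-\hat C w_n}(w_n-u_k)^-$, and deduces \emph{strong} $W_0^{1,p}$-convergence of $w_n$ to $u_k$, which is stronger than a.e.\ convergence of gradients. Two problems arise with your version.

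First, the exponential weight $e^{\gamma(w_n-u_k)^+}$ is placed on the wrong side: its derivative vanishes identically on $\{w_n<u_k\}$, which is exactly the region where $\int H_n(B(|\nabla w_n|))T_\delta(w_n-u_k)e^{\gamma(w_n-u_k)^+}\,dx$ is negative and needs to be dominated. Compare with the paper's weight $e^{-\hat C w_n}$, whose gradient produces the term $-\hat C e^{-\hat C w_n}A(|\nabla w_n|)|\nabla w_n|^2(w_n-u_k)^-$, nonnegative and, by \eqref{stimaBalto}, large enough to absorb the bad $B$-contribution. In your setup the exponential absorbs nothing; the step can still be made to close because the truncation $T_\delta$ makes the bad $B$-term $O(\delta)$ uniformly in $n$, but the stated justification is incorrect.

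Second, and more seriously, the equi-integrability asserted in Step~3 is not substantiated. Both $|A(|\nabla w_n|)\nabla w_n|^{p'}$ and $H_n(B(|\nabla w_n|))$ are dominated pointwise (via \eqref{D10}, \eqref{tA<K}, \eqref{stimaBalto}) by $C+C|\nabla w_n|^p$, so their equi-integrability hinges on equi-integrability of $\{|\nabla w_n|^p\}_n$. A uniform $L^1$ bound together with a.e.\ convergence does \emph{not} yield equi-integrability: the sequence could concentrate. The paper fills precisely this hole by first proving $\|w_n-u_k\|_{W_0^{1,p}(\Om)}\to 0$, then extracting (by the partial converse of dominated convergence) a dominating $v\in L^1(\Om)$ with $|\nabla w_n|^p\leq v$ a.e.\ along a subsequence, after which Vitali applies. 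Your plan, which stops at a.e.\ convergence of gradients, leaves this open. To repair it along the same lines, test directly with $(w_n-u_k)^+$ and with $e^{-\hat C w_n}(w_n-u_k)^-$ (both admissible since $w_n,u_k\in L^\infty$) so as to obtain strong convergence, exactly as the paper does.
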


\begin{proof}
	We proceed in two steps.
	\vskip5pt
	\noindent\textit{Step 1:} We show the weak convergence of $\{w_n\}_{n\in \mathbb{N}}$ in $W_0^{1,p}(\Omega)$.\\
	Let us set  $$H_n\left(B(|\nabla w_n|)\right) := \frac{B(|\nabla w_n|)}{1+\frac{1}{n}B(|\nabla w_n|)}. $$
	
	\noindent 
	Considering that $w_n \in W_0^{1,p}(\Omega)\cap L^{\infty}(\Omega)$, we can take $w_n$ as a test function in the approximated problems \eqref{Pniterato} obtaining
	\begin{equation*}
	\begin{split}
	&\int_{\Om} A(|\nabla w_n|)|\nabla w_n|^2\,dx + \int_{\Om}H_n(B(|\nabla w_n|))w_n \,dx \\
	&= \vartheta \int_{\Om} T_k\left(\frac{w_{n-1}^q}{|x|^p}\right)w_n \,dx + \int_{\Om} T_k(f)w_n\,dx \\
	& \leq \vartheta\int_{\Om}kw_n\,dx + \int_{\Om} fw_n \,dx \leq C(k,f,\psi,\vartheta, \Omega)
	\end{split}
	\end{equation*}
	\noindent where $C(k,f,\psi, \vartheta, \Omega)$ is a positive constant.\\
	\noindent Since $H_n(B(|\nabla w_n|))$ is a positive function, from inequality \eqref{D20}, we have that 
	\begin{equation}
	c_2\int_{\Om} |\nabla w_n|^p\,dx \leq \int_{\Om} A(|\nabla w_n|)|\nabla w_n|^2\,dx \leq C(k,f,\psi,\vartheta, \Omega).
	\end{equation}
	
	\noindent Then $\norm{w_n}_{W_0^{1,p}(\Om)}$ is uniformly bounded so, up to a subsequence, we have that 
	\begin{align*}
	&w_n \rightharpoonup \bar{u}_k \ \mbox{ in } W_0^{1,p}(\Omega) \mbox{ and } w_n \rightharpoonup \bar{u}_k \ \mbox{ in }L^{p}(\Omega),\\
	&w_n \shortstack{*\\ [-0.9 ex]$\rightharpoonup$} \  u_k \ \mbox{ in } L^{\infty}(\Omega).
	\end{align*}
	
	\noindent We observe that $u_k= \bar{u}_k$: in fact, by definition of weak  and weak-* convergence we get that 
	
	$$\int_{\Om} w_n \varphi \,dx \ \xrightarrow{n \rightarrow +\infty } \int_{\Om}\bar{u}_k\varphi \,dx \quad \forall \varphi \in L^{p'}(\Omega) $$
	$$\int_{\Om} w_n \varphi \,dx \xrightarrow{n \rightarrow +\infty } \int_{\Om}u_k\varphi \,dx \quad \forall \varphi \in L^1(\Omega) $$
 but, since $p'=\frac{p}{p-1}>1$, the last limit holds also for $\varphi \in L^{p'}(\Om)$, so $u_k= \bar{u}_k$  and  $u_k \in W_0^{1,p}(\Om) \cap L^{\infty}(\Om)$.\\

	\noindent \textit{Step 2:} We show the strong convergence of $\{w_n\}_{n\in \mathbb{N}}$ in $W_0^{1,p}(\Om)$.\\
	To get the strong convergence in $W_0^{1,p}(\Om)$, using Minkowski  inequality, it follows that
	\begin{equation}\label{eq20}
	\norm{w_n-u_k}_{\Wp} \leq \norm{(w_n-u_k)^+}_{\Wp}+\norm{(w_n-u_k)^-}_{\Wp}.
	\end{equation}
\vskip 10pt
	\noindent We first consider the asymptotic behaviour of $\norm{(w_n-u_k)^+}_{\Wp}$.\\
	\noindent Choosing $(w_n-u_k)^+$ as test function in \eqref{Pniterato}, we have
	
	\begin{equation}\label{eq16}
	\begin{split}
	&\int_{\Om} A(|\nabla w_n|)(\nabla w_n, \nabla(w_n-u_k)^+)\,dx + \int_{\Om} H_n(B(|\nabla w_n|))(w_n-u_k)^+ \,dx \\
	& \ \ \ = \vartheta \int_{\Om} T_k\left(\frac{w_{n-1}^q}{|x|^p}\right)(w_n-u_k)^+ \,dx + \int_{\Om} T_k(f)(w_n-u_k)^+\,dx.
	\end{split}
	\end{equation}

	\noindent Since $w_n \rightharpoonup u_k$ in $\Wp$, using the compact Sobolev embedding we obtain $w_n \rightarrow u_k$ in $L^p(\Om)$ con $p\in [1,p^*)$ and then, up to subsequence we obtain that $w_n \rightarrow u_k$ a.e. in $\Om$.  Moreover $(w_n-u_k)^+ \rightarrow 0$ a.e. in $\Omega$. \\
	
	\indent By dominated convergence theorem , the right-hand side of \eqref{eq16} goes to zero when $n$ goes to infinity. Since $\displaystyle \int_{\Om} H_n(B(|\nabla w_n|))(w_n-u_k)^+ \,dx \geq 0$, \eqref{eq16} becomes
	
 \begin{equation}\label{eq19}
  \int_{\Om}A(|\nabla w_n|)(\nabla w_n,\nabla(w_n-u_k)^+)  \,dx \leq o(1) \ \mbox{ as } \  n\rightarrow +\infty.
 \end{equation}
	\\
	\noindent Furthermore, let us observe that $(w_n-u_k)^+ \rightharpoonup 0$ in $\Wp$ implies that 
	
	\begin{equation}\label{eq102}
	\int_{\Om} A(|\nabla u_k|)(\nabla u_k, \nabla(w_n-u_k)^+)\,dx = o(1) \ \mbox{ as } \ n\rightarrow +\infty.
	\end{equation}
	
	\noindent In fact, the operator 
	$$v \mapsto \int_{\Om} A(|\nabla u_k|)(\nabla u_k, \nabla v) \,dx$$ 
	is linear and continuous because, by \eqref{D10} and \eqref{tA<K}, we have

\begin{equation*}
\begin{split}
 &\Bigg|\int_{\Om}  (A(|\nabla u_k|)\nabla u_k, \nabla v) \,dx\Bigg|  \leq \left(\int_{\Om} |A(|\nabla u_k|)\nabla u_k|^{\frac{p}{p-1}}\,dx\right)^{\frac{p-1}{p}} \left(\int_{\Om}|\nabla v|^p\,dx\right)^{\frac{1}{p}} \\
 & \leq \left(C_K^{\frac{p}{p-1}}|\Om \cap \{|\nabla u_k|<K\}| + c\int_{\Om\cap \{|\nabla u|\geq K\}} |\nabla u_k|^p\,dx\right)^{\frac{p-1}{p}}\left(\int_{\Om}|\nabla v|^p\,dx\right)^{\frac{1}{p}} <+\infty. 
\end{split}
\end{equation*}

	\noindent Then, using \eqref{eq102}, we write \eqref{eq19} as
	
	$$\int_{\Om} \left[A(|\nabla w_n|)\nabla w_n-A(|\nabla u_k|)\nabla u_k\right] \cdot  \nabla(w_n-u_k)^+ \,dx\leq o(1). $$
	
	\noindent Using \eqref{D2}, we get 
	\begin{equation*}
	\begin{split}
	0 &\leq c_2 \int_{\Om}  (|\nabla w_n|+|\nabla u_k|)^{p-2}|\nabla(w_n-u_k)^+|^2   \,dx \\
	&\leq \int_{\Om} \left[A(|\nabla w_n|)\nabla w_n-A(|\nabla u_k|)\nabla u_k\right] \cdot  \nabla(w_n-u_k)^+ \,dx \leq o(1), 
	\end{split}
	\end{equation*}

	\noindent hence 
	$$\int_{\Om} \left[A(|\nabla w_n|)\nabla w_n-A(|\nabla u_k|)\nabla u_k\right] \cdot  \nabla(w_n-u_k)^+ \,dx = o(1) \mbox{ as } n\rightarrow +\infty. $$
	
	
	\noindent By \eqref{D2} and \eqref{Dp2}, we obtain
	\begin{equation}\label{eq17}
	\begin{split}
	\displaystyle
	o(1) &=  \int_{\Om} \left[A(|\nabla w_n|)\nabla w_n-A(|\nabla u_k|)\nabla u_k\right] \cdot  \nabla(w_n-u_k)^+ \,dx \\
	&\geq \begin{cases} \displaystyle
	c_2\int_{\Om} \frac{|\nabla(w_n-u_k)^+|^2}{\left(|\nabla w_n|+|\nabla u_k|\right)^{2-p}}\,dx \quad & \mbox{ if } 1<p  < 2,\\
	\displaystyle
	c_2\int_{\Om} |\nabla (w_n-u_k)^+|^p\,dx  & \mbox{ if } p\geq 2. 
	\end{cases}
	\end{split}
	\end{equation}

	\noindent We consider two different case:
	\begin{itemize}
		\item If $p\geq 2$, by \eqref{eq17} we have 
		
		\begin{equation}\label{norma+}
		\norm{(w_n-u_k)^+}_{\Wp} =o(1) \mbox{ as } n\rightarrow +\infty.
		\end{equation}

		\item If $1<p < 2$,
		\begin{equation*}
		\qquad \begin{split}\displaystyle
		\int_{\Om} |\nabla (w_n-u_k)^+|^p\,dx = \displaystyle \bigintssss_{\Om} \frac{|\nabla(w_n-u_k)^+|^p}{(|\nabla w_n|+|\nabla u_k|)^{\frac{p(2-p)}{2}}} (|\nabla w_n|+|\nabla u_k|)^{\frac{p(2-p)}{2}}\,dx.	
		\end{split}
		\end{equation*}
	\noindent By H\"{o}lder inequality and \eqref{eq17}, we obtain 
		
		\begin{equation*}
		\qquad \begin{split}
		&\int_{\Om} |\nabla (w_n-u_k)^+|^p\,dx \\ 
		&\leq \left(\bigintssss_{\Om} \frac{|\nabla(w_n-u_k)^+|^2}{\left(|\nabla w_n|+|\nabla u_k|\right)^{2-p}} \,dx \right)^{\frac{p}{2}} \Bigg(\int_{\Om} (|\nabla w_n|+|\nabla u_k|)^p\,dx\Bigg)^{\frac{2-p}{p}} =o(1).
		\end{split}
		\end{equation*}
	\end{itemize}
Then 
\begin{equation}\label{norma2+}
\norm{(w_n-u_k)^+}_{\Wp}=o(1) \mbox{ as } n\rightarrow +\infty.
\end{equation}

\noindent In order to complete the asymptotic behaviour of \eqref{eq20},  let us consider $e^{-\hat{C}w_n}[(w_n-u_k)^-]$ as test function in \eqref{Pniterato}, where $\hat{C}$ is the positive constant in \eqref{crescitaB'}. 
\begin{equation*}
\begin{split}
&\int_{\Om} A(|\nabla w_n|) (\nabla w_n, \nabla (e^{-\hat{C}w_n}[(w_n-u_k)^-]))\,dx \\
& \ \ \ + \int_{\Om} H_n(B(|\nabla w_n|))e^{-\hat{C}w_n}[(w_n-u_k)^-] \,dx \\
&= \vartheta \int_{\Om} e^{-\hat{C}w_n} T_k\left(\frac{w_{n-1}^q}{|x|^p}\right)(w_n-u_k)^- \,dx + \int_{\Om} e^{-\hat{C}w_n} T_k(f)(w_n-u_k)^-\,dx.
\end{split}
\end{equation*}

\noindent Then 
\begin{equation}\label{eq18}
\begin{split}
&\int_{\Om} e^{-\hat{C}w_n}A(|\nabla w_n|)(\nabla w_n , \nabla (w_n-u_k)^-)\,dx \\
& \ \ \ + \int_{\Om} e^{-\hat{C}w_n} \left[H_n(B(|\nabla w_n|))-\hat{C}A(|\nabla w_n|)|\nabla w_n|^2\right](w_n-u_k)^-\,dx\\
&= \vartheta \int_{\Om} e^{-\hat{C}w_n} T_k\left(\frac{w_{n-1}^q}{|x|^p}\right)(w_n-u_k)^- \,dx + \int_{\Om} e^{-\hat{C}w_n} T_k(f)(w_n-u_k)^-\,dx.
\end{split}
\end{equation}

\noindent By \eqref{stimaBalto}, we have that 
\begin{equation*}
H_n(B(|\nabla w_n|))-\hat{C}A(|\nabla w_n|)|\nabla w_n|^2 \leq B(|\nabla w_n|)-\hat{C}A(|\nabla w_n|)|\nabla w_n|^2 \leq 0 
\end{equation*}

\noindent then 
\begin{equation*}
\int_{\Om} e^{-\hat{C}w_n} \left[H_n(B(|\nabla w_n|))-\hat{C}A(|\nabla w_n|)|\nabla w_n|^2\right](w_n-u_k)^-\,dx  \geq 0.
\end{equation*}

\noindent As above, since $(w_n-u_k)^- \rightarrow 0$, by dominated convergence theorem, the right-hand side of \eqref{eq18} tends to zero when $n$ goes to infinity.\\

\noindent So, the equation \eqref{eq18} becomes 
 \begin{equation}\label{e105}
 \int_{\Om} e^{-\hat{C}w_n}A(|\nabla w_n|)(\nabla w_n , \nabla (w_n-u_k)^-) \,dx \leq o(1).
 \end{equation}
 
 \noindent Since $(w_n-u_k)^-\rightharpoonup 0$ for $n \rightarrow +\infty$, 
 \begin{equation}\label{eq106}
 \begin{split}
 &\int_{\Om} A(|\nabla w_n|)(\nabla w_n , \nabla (w_n-u_k)^-) \,dx  \\
 &= \int_{\Om} (A(|\nabla w_n|)\nabla w_n - A(|\nabla u_k|)\nabla u_k , \nabla (w_n-u_k)^-) \,dx \\
 & \ \ \ + \int_{\Om} A(|\nabla u_k|)(\nabla u_k , \nabla (w_n-u_k)^-) \,dx \\ 
 &= \int_{\Om} e^{\hat{C}w_n}e^{-\hat{C}w_n}(A(|\nabla w_n|)\nabla w_n - A(|\nabla u_k|)\nabla u_k , \nabla (w_n-u_k)^-) \,dx + o(1).
 \end{split}
 \end{equation}

\noindent By Proposition \ref{Solproblemait}, $0\leq w_n \leq \psi$, therefore one has that $e^{\hat{C}w_n} \leq e^{\hat{C}\psi}=: \gamma >0$ uniformly on $n$ being $\psi \in L^{\infty}(\Omega)$. Then, using \eqref{D2} and \eqref{e105}, we estimate \eqref{eq106} as  

\begin{equation}\label{eq107}
\begin{split}
&\int_{\Om} A(|\nabla w_n|)(\nabla w_n , \nabla (w_n-u_k)^-) \,dx  \\
&= \int_{\Om} e^{\hat{C}w_n}e^{-\hat{C}w_n}(A(|\nabla w_n|)\nabla w_n - A(|\nabla u_k|)\nabla u_k , \nabla (w_n-u_k)^-) \,dx + o(1)\\
&\leq  \gamma \int_{\Om} e^{-\hat{C}w_n}(A(|\nabla w_n|)\nabla w_n - A(|\nabla u_k|)\nabla u_k , \nabla (w_n-u_k)^-) \,dx + o(1)\\
& \leq -\gamma \int_{\Om} e^{-\hat{C}w_n} A(|\nabla u_k|)(\nabla u_k , \nabla (w_n-u_k)^-) \,dx + o(1).
\end{split}
\end{equation}

\noindent Now, we estimate the right hand-side of \eqref{eq107} in this way: 

\begin{equation}\label{eq900}
\begin{split}
 &\int_{\Om} e^{-\hat{C}w_n} A(|\nabla u_k|)(\nabla u_k , \nabla (w_n-u_k)^-) \,dx\\ 
 &=  \int_{\Om} e^{-\hat{C}(w_n-u_k)^-} e^{-\hat{C}u_k}A(|\nabla u_k|)(\nabla u_k , \nabla (w_n-u_k)^-) \,dx
 \\
 &= -\frac{1}{\hat{C}}  \int_{\Om}e^{-\hat{C}u_k}A(|\nabla u_k|)(\nabla u_k , \nabla (e^{-\hat{C}(w_n-u_k)^-}-1)) \,dx
 \end{split}
\end{equation}

\noindent Since $\{ e^{-\hat{C}(w_n-u_k)^-}-1\}_{n\in \mathbb{N}}$ is uniformly bounded in $W_0^{1,p}(\Om)$, up to a subsequence there exists $g\in W_0^{1,p}(\Om)$ such that

\begin{equation}\label{convg}
e^{-\hat{C}(w_n-u_k)^-}-1  \rightharpoonup g \mbox{ as } n\rightarrow +\infty.
\end{equation}

\noindent In particular we note that $g \equiv 0$ since $w_n \rightarrow u_k$ a.e. in $\Om$ as $n\rightarrow +\infty.$

\noindent Hence, by \eqref{convg}, for $n\rightarrow +\infty$

\begin{equation}\label{eq108}
\int_{\Om}e^{-\hat{C}u_k}A(|\nabla u_k|)(\nabla u_k , \nabla ( e^{-\hat{C}(w_n-u_k)^-}-1)) \,dx \rightarrow 0. 
\end{equation}

\noindent Then, using \eqref{eq900} and \eqref{eq108} in \eqref{eq107}, one has
\begin{equation*}
\int_{\Om} A(|\nabla w_n|)(\nabla w_n , \nabla (w_n-u_k)^-) \,dx \leq o(1).
\end{equation*}

\noindent Arguing in the same way as we have done from equation \eqref{eq19} to \eqref{norma2+}, we obtain 
that 
\begin{equation}\label{norma-}
\norm{(w_n-u_k)^-}_{\Wp} \rightarrow 0 \mbox{ as } n\rightarrow +\infty.
\end{equation}

\noindent Summarizing up, from \eqref{eq20} we get that 
 \begin{equation}\label{convnorma}
\norm{w_n-u_k}_{\Wp} \rightarrow 0 \mbox{ as } n\rightarrow +\infty
 \end{equation}

\noindent Up to a subsequence, $|\nabla w_n| \rightarrow |\nabla u_k|$  a.e. in $\Omega$ and there exists $v(x)\in L^1(\Omega)$ such that $|\nabla w_n|^p \leq v(x)$ a.e. in $\Omega$. 

\noindent Since $B$ is continuous, we get that 
$$\lim\limits_{n \rightarrow +\infty} H_n(B(|\nabla w_n|)) = \lim\limits_{n \rightarrow +\infty} \frac{B(|\nabla w_n|)}{1+\frac{1}{n}B(|\nabla w_n|)} = B(|\nabla u_k|) \mbox{ a.e. in } \Omega.$$

\noindent Moreover, if $E \subset \Omega$ is a measurable set, by \eqref{stimaBalto}, \eqref{D10} and \eqref{tA<K}, we have 
\begin{equation*}
\begin{split}
\int_{E} H_n(B(|\nabla w_n|))\,dx & \leq \int_{E} B(|\nabla w_n|) \,dx \leq \hat{C}\int_{E} |A(|\nabla w_n|)||\nabla w_n|^2 \,dx \\
& \leq KC_K\hat{C}|E\cap \{|\nabla w_n| < K\}| + c_1\hat{C} \int_{E\cap \{|\nabla w_n| \geq K\}} |\nabla w_n|^p\,dx \\
& \leq KC_K\hat{C}|E| + c_1\hat{C} \int_E v(x)\,dx
\end{split}
\end{equation*}

\noindent and, since $v(x)\in L^1(\Omega)$, we get that $H_n(B(|\nabla w_n|))$ is uniformly integrable.\\

\noindent Using Vitali Theorem we have
$$ H_n(B(|\nabla w_n|)) \rightarrow B(|\nabla u_k|)  \mbox{ in } L^1(\Omega).$$

To conclude the proof, passing to the limit in weak formulation of problem \eqref{Pniterato}, we obtain that $u_k \in \Wp \cap L^{\infty}(\Omega)$ satisfies 
\begin{equation*}
\begin{split}
\int_{\Om} & A(|\nabla u_k|)(\nabla u_k,\nabla \varphi)\,dx \  +\int_{\Om} B(|\nabla u_k|)\varphi \,dx \\
& \ =\vartheta \int_{\Om} T_k\left(\frac{u_k^q}{|x|^p}\right)\varphi\,dx +\int_{\Om} T_k(f)\varphi \,dx \quad \forall \varphi \in \Wp \cap L^{\infty}(\Omega),
\end{split}
\end{equation*}

\noindent hence $u_k$ is a positive weak solution to the problem \eqref{P1_k}.
\end{proof}

\subsection{Existence of solutions to the problem \eqref{P}.}\ \\
\noindent In order to prove the existence of solution to the problem \eqref{P}, we exploit the following Lemma (see \cite{MontSciuPM}). 

\begin{lem}\label{lemmastima}
	Let $\psi_n(s)$ defined as
	\begin{equation}\label{defpsi1}
	\psi_n(s)= \int_0^s T_n(t)^{\frac{1}{p}}\,dt.
	\end{equation}
	
	\noindent For fixed $q \in [p-1, p), \forall \varepsilon >0$ and $\forall n>0$, there exists $C_\varepsilon$ such that
	\begin{equation}\label{eqstima}
	s^qT_n(s) \leq \varepsilon \psi_n^p(s) +C_{\varepsilon}, \qquad s\geq 0.
	\end{equation}
\end{lem}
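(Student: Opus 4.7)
The plan is to reduce the desired inequality to a comparison between powers of $s$ by first establishing a pointwise, $n$-independent lower bound
\[
\psi_n(s)\;\geq\;\frac{p}{p+1}\,s\,T_n(s)^{1/p},\qquad s\geq 0,\ n>0.
\]
Once this is in hand, raising to the $p$-th power gives $\psi_n^p(s)\geq\bigl(\tfrac{p}{p+1}\bigr)^p s^p T_n(s)$, and the inequality $s^q T_n(s)\leq \varepsilon\psi_n^p(s)+C_\varepsilon$ will follow essentially from the fact that $q<p$.

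To prove the key lower bound I will split the analysis according to whether $s\leq n$ or $s>n$, using the definition of $T_n$. For $s\leq n$ we have $T_n(t)=t$ on $[0,s]$, so that $\psi_n(s)=\tfrac{p}{p+1}s^{(p+1)/p}$ and the inequality is in fact an equality. For $s>n$ a direct computation yields
\[
\psi_n(s)=\frac{p}{p+1}n^{(p+1)/p}+n^{1/p}(s-n)=n^{1/p}\Bigl(s-\frac{n}{p+1}\Bigr),
\]
and since $n\leq s$ gives $\tfrac{n}{s(p+1)}\leq\tfrac{1}{p+1}$, we deduce $\psi_n(s)\geq \tfrac{p}{p+1}s\,n^{1/p}=\tfrac{p}{p+1}s\,T_n(s)^{1/p}$, as claimed.

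To conclude, I will use the key bound to write
\[
s^q T_n(s)=s^{q-p}\cdot s^p T_n(s)\leq \Bigl(\tfrac{p+1}{p}\Bigr)^{\!p}s^{q-p}\,\psi_n^p(s).
\]
Set $s_0:=\bigl(\tfrac{1}{\varepsilon}\bigl(\tfrac{p+1}{p}\bigr)^{p}\bigr)^{1/(p-q)}$, which is finite because $q<p$. If $s\geq s_0$ then $\bigl(\tfrac{p+1}{p}\bigr)^p s^{q-p}\leq\varepsilon$ and the displayed line gives $s^q T_n(s)\leq\varepsilon\psi_n^p(s)$. If instead $s<s_0$, then using $T_n(s)\leq s$ I obtain $s^q T_n(s)\leq s^{q+1}\leq s_0^{q+1}$, so the choice $C_\varepsilon:=s_0^{q+1}$ handles this range. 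Adding the two estimates yields \eqref{eqstima} uniformly in $n$.

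The whole argument is essentially computational; the only non-obvious point is the key lower bound for $\psi_n$, whose proof requires noticing that for $s>n$ the function $\psi_n$ still grows at least like $\tfrac{p}{p+1}sT_n(s)^{1/p}$ because $n/(p+1)$ is absorbed into $s$. With that observation the rest is an application of the inequality $q<p$, and the constant $C_\varepsilon$ depends only on $\varepsilon$, $p$ and $q$, as required.
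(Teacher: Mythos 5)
The paper does not supply a proof of this lemma: it is stated with a pointer to \cite{MontSciuPM}, so there is no in-text argument to compare against. Your proof is correct and self-contained. The key lower bound $\psi_n(s)\ge\frac{p}{p+1}\,s\,T_n(s)^{1/p}$ is verified cleanly in both regimes (equality for $s\le n$; for $s>n$ one gets $\psi_n(s)=n^{1/p}(s-\tfrac{n}{p+1})\ge\tfrac{p}{p+1}s\,n^{1/p}$), the threshold $s_0$ is chosen so that $(\tfrac{p+1}{p})^p s^{q-p}\le\varepsilon$ for $s\ge s_0$, and on $[0,s_0)$ you use $T_n(s)\le s$ together with $q+1>0$ (guaranteed by $q\ge p-1>0$) to bound $s^qT_n(s)$ by $s_0^{q+1}$. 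The resulting $C_\varepsilon$ depends only on $\varepsilon$, $p$, $q$, hence is uniform in $n$, exactly as the lemma requires. This is the standard argument and matches what one would expect from the cited source.
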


\noindent Next we prove our main Theorem \ref{ExistenceM}.\ \\ \ \\
 {\bf Theorem 1.2.}
	\emph{Let $f \in L^1(\Omega)$ a positive function; then for every $\vartheta >0$ there exists a weak solution $u \in \Wp $ to \eqref{P}.}

\begin{proof}Again our proof is divided in steps.\ \\
	\textit{Step 1:} We show the weak convergence of $\{u_k\}_{k\in \mathbb{N}}$ in $\Wp$.\\
	Taking as function test $T_n(u_k)$ in the truncated problem \eqref{P1_k}, we obtain
	\begin{equation*}
	\begin{split}
	\int_{\Om} &A(|\nabla u_k|)(\nabla u_k , \nabla T_n(u_k)) \,dx + \int_{\Om} B(|\nabla u_k|)T_n(u_k)\,dx \\
	& = \vartheta \int_{\Om} T_k\left(\frac{u_k^q}{|x|^p}\right)T_n(u_k)\,dx +\int_{\Om} T_k(f)T_n(u_k)\,dx.
	\end{split}
	\end{equation*}
\noindent By \eqref{defpsi1} we have that $|\nabla \psi_n(u_k)|^p= T_n(u_k)|\nabla u_k|^p$ and since $u_k$ is a positive function, we get $$T_k\left(\frac{u_k^q}{|x|^p}\right) \leq \frac{u_k^q}{|x|^p}.$$

\noindent Then
\begin{equation}\label{eq21}
\begin{split}
\int_{\Om}  A(|& \nabla u_k|) (\nabla u_k , \nabla T_n(u_k)) \,dx \ + \ \int_{\Om} B(|\nabla u_k|)T_n(u_k)\,dx \\
& \ \ \ \ \ \leq \vartheta \int_{\Om} \frac{u_k^q}{|x|^p}T_n(u_k) \,dx +n\int_{\Om} f \,dx.
\end{split}
\end{equation}

\noindent Using Lemma \ref{lemmastima} and Lemma \ref{DHardy}, the equation \eqref{eq21} becomes
\begin{equation}\label{e33}
\begin{split}
&\int_{\Om} A(|\nabla T_{n}(u_k)|)|\nabla T_{n}(u_k)|^2 \,dx \ + \ \int_{\Om} B(|\nabla u_k|)T_n(u_k)\,dx \\
&\leq \vartheta \int_{\Om} (\varepsilon \psi^p_n(u_k)+C_{\varepsilon})\frac{1}{|x|^p}\,dx + n\norm{f}_1\\
& \leq \vartheta \varepsilon C_{N,p} \int_{\Om} |\nabla \psi_n(u_k)|^p\,dx
+\vartheta C_{\varepsilon}\int_{\Om} \frac{1}{|x|^p}\,dx + n\norm{f}_1.
\end{split}
\end{equation}
\\
Using \eqref{stimaBbasso} and \eqref{D20}, equation \eqref{e33} becomes
\begin{equation*}
\begin{split}
&c_2\int_{\Om} |\nabla T_{n}(u_k)|^p \,dx \ + \sigma\int_{\Om} |\nabla \psi_n(u_k)|^p\,dx\\
& \leq \vartheta \varepsilon C_{N,p} \int_{\Om} |\nabla \psi_n(u_k)|^p\,dx
+\vartheta C_{\varepsilon}\int_{\Om} \frac{1}{|x|^p}\,dx + n\norm{f}_1
\end{split}
\end{equation*}

\noindent and since $\Omega$ is a bounded set and $p<N$ we get
\begin{equation*}
\begin{split}
c_2\int_{\Om} |\nabla T_{n}(u_k)|^p \,dx \ + \ (\sigma- \vartheta \varepsilon C_{N,p}) \int_{\Om} |\nabla \psi_n(u_k)|^p\,dx \leq C(n, \varepsilon,p, f, \vartheta, \Omega).
\end{split}
\end{equation*}
\\

\noindent If we choose $\displaystyle  0< \varepsilon < \frac{\sigma}{\vartheta C_{N,p}}$, by setting $\tilde{C} :=  \sigma-\vartheta\varepsilon C_{N,p}> 0$, we get

\begin{equation}\label{stima3}
\begin{split}
\int_{\Om} |\nabla T_{n}(u_k)|^p \,dx \ + \ \frac{\tilde{C}}{c_2} \int_{\Om} |\nabla \psi_n(u_k)|^p\,dx \leq C(n,f, p, c_2, \sigma, \vartheta, C_{N,p}, \Omega).
\end{split}
\end{equation}
\\
\noindent Fixed  $\displaystyle l \geq \frac{c_2}{\tilde{C}}$, since $|\nabla \psi_l(u_k)|^p= T_l(u_k)|\nabla u_k|^p= l |\nabla u_k|^p$ on $\Om \cap \{u_k \geq l\}$ and using \eqref{stima3}, one has
\begin{equation} \label{u_kunifb}
\begin{split}
\int_{\Om} |\nabla u_k|^p\,dx & = \int_{\Om \cap \{u_k \leq l\}} |\nabla u_k|^p\,dx + \int_ {\Om \cap \{u_k \geq l\}} |\nabla u_k|^p\,dx\\
&= \int_{\Om} |\nabla T_l(u_k)|^p\,dx + \frac{1}{l}\int_ {\Om \cap \{u_k \geq l\}} |\nabla \psi_l(u_k)|^p\,dx\\
& \leq \int_{\Om} |\nabla T_l(u_k)|^p\,dx + \frac{1}{l}\int_ {\Om} |\nabla \psi_l(u_k)|^p\,dx \\
& \leq \int_{\Om} |\nabla T_{l}(u_k)|^p \,dx \ + \ \frac{\tilde{C}}{c_2} \int_{\Om} |\nabla \psi_l(u_k)|^p\,dx \\ & \leq C(l,f, p, c_2, \sigma, \vartheta, C_{N,p}, \Omega).
\end{split}
\end{equation}

\noindent Then, $\norm{u_k}_{\Wp}$ is uniformly bounded on $k$. Therefore, up to a subsequence, it follows that $u_k\rightharpoonup u$ in $\Wp$ and a.e. in $\Omega$.\\

\noindent \textit{Step 2:} We show the strong convergence in $L^1(\Omega)$ of the singular term.\\
\noindent By H\"{o}lder and Hardy inequalities we have
\begin{equation*}
\begin{split}
&\int_{\Om}T_k\left(\frac{u_k^q}{|x|^p}\right)\,dx \leq \int_{\Om} \frac{u_k^q}{|x|^p}\,dx = \int_{\Om} \frac{u_k^q}{|x|^q}\frac{1}{|x|^{p-q}}\,dx \\
& \leq \left( \int_{\Om}  \frac{u_k^p}{|x|^p}\,dx \right)^{\frac{q}{p}} \left(\int_{\Om}\frac{1}{|x|^{p}} \,dx\right)^{\frac{p-q}{p}}\leq c  \left( \int_{\Om}  |\nabla u_k|^p\,dx\, \right)^{\frac{q}{p}} \leq \bar{C},
\end{split}
\end{equation*}

\noindent where $\bar{C}$ is a positive constant that does not depend on $k$. Hence we deduce that $T_k\left(\frac{u_k^q}{|x|^p}\right)$ is bounded in $L^1(\Om)$.\\
\\
 Since $u_k \rightarrow u$ a.e. in $\Om$, by definition of $T_k(s)$, we get that 

$$T_k\left(\frac{u_k^q}{|x|^p}\right) \rightarrow \frac{u^q}{|x|^p} \mbox{ a.e. in } \Om.$$

\noindent Noting that $T_k\left(\frac{u_k^q}{|x|^p}\right) \geq 0$, we can use Fatou Lemma and obtain

$$\int_{\Om} \frac{u^q}{|x|^p}\, dx \leq \liminf\limits_{k \rightarrow +\infty} \int_{\Om} T_k\left(\frac{u_k^q}{|x|^p}\right)\,dx, $$

\noindent then $\displaystyle\frac{u^q}{|x|^p} \in L^1(\Om)$.
\vskip 2pt
\noindent We now consider a measurable set $E \subset \Omega$. Using again Fatou Lemma and H\"{o}lder inequality we have

\begin{equation*}
\begin{split}
&\int_{E} T_k\left(\frac{u_k^q}{|x|^p}\right)\,dx \leq \int_{E} \frac{u_k^q}{|x|^p} \,dx  \leq \liminf\limits_{n \rightarrow +\infty} \int_{E} \frac{w_n^q}{|x|^p}\,dx \leq \int_{E} \frac{\psi^q}{|x|^p}\,dx\\
& = \int_{E} \frac{\psi^q}{|x|^q}\frac{1}{|x|^{p-q}}\,dx \leq   \left( \int_{E}  \frac{\psi^p}{|x|^p}\, \,dx \right)^{\frac{q}{p}} \left(\int_{E }\frac{1}{|x|^{p}}\, dx\right)^{\frac{p-q}{p}} \\
& \leq c \norm{\nabla \psi}_p^{\frac{q}{p}} \left(\int_{E }\frac{1}{|x|^{p}}\, dx\right)^{\frac{p-q}{p}} < \delta(\mu(E))
\end{split}
\end{equation*}

\noindent uniformly on $k$, where $\lim\limits_{s \rightarrow 0} \delta(s)=0$.\\
Thus, from Vitali Theorem, we get
\begin{equation}\label{conL1T_k}
T_k\left(\frac{u_k^q}{|x|^p}\right) \rightarrow \frac{u^q}{|x|^p} \mbox{ in } L^1(\Om).
\end{equation}

\vskip 10pt
\noindent \textit{Step 3:} We show the strong convergence of $|\nabla u_k|^p \rightarrow |\nabla u|^p$ in $L^1(\Om)$.

\noindent We need two preliminary results.

\begin{lem}\label{lim0}
	Let $u_k$ be a weak solution to the problem \eqref{P1_k}. Then 
	
\begin{equation*}
\lim\limits_{n \rightarrow +\infty} \int_{\{u_k \geq n\}} |\nabla u_k|^p \,dx=0
\end{equation*}
uniformly on $k$.

\begin{proof}
	Let us consider the functions
	$$G_n(s)= s-T_n(s) \quad  \mbox{ and } \quad \tau_{n-1}(s)=T_1(G_{n-1}(s)).$$
	
	\noindent We get that 
	\begin{equation}\label{defpsin-1}
	\tau_{n-1}(u_k)=T_1(u_k-T_{n-1}(u_k))=\begin{cases}
	0 & \mbox{if } u_k <n-1,\\
	u_k-(n-1) & \mbox{if } n-1 \leq u_k < n,\\
	1 & \mbox{if } u_k \geq n.
	\end{cases} 
	\end{equation}
	
	\noindent Using $\tau_{n-1}(u_k)$ as a function test in \eqref{P1_k}, we get
	\begin{equation}\label{eq25}
	\begin{split}
	& \int_{\Om} A(|\nabla u_k|)(\nabla u_k, \nabla \tau_{n-1}(u_k))\,dx +  \int_{\Om} B(|\nabla u_k|)\tau_{n-1}(u_k)\,dx \\
	&= \vartheta \int_{\Om} T_k\left(\frac{u_k^q}{|x|^p}\right)\tau_{n-1}(u_k)\,dx + \int_{\Om} T_k(f)\tau_{n-1}(u_k)\,dx.
	\end{split}
	\end{equation}

     \noindent We want to estimate the left-hand side of \eqref{eq25}. Using the definition of $\tau_{n-1}(u_k)$ and \eqref{D20}, we get
	\begin{equation}\label{eq23}
	\begin{split}
	&\int_{\Om} A(|\nabla u_k|)(\nabla u_k, \nabla \tau_{n-1}(u_k))\,dx = \int_{\{n-1 \leq u_k < n\}} A(|\nabla u_k|)(\nabla u_k, \nabla u_k)\,dx \\
	&=\int_{\{n-1 \leq u_k < n\}} A(|\nabla u_k|)|\nabla u_k|^2\,dx \geq c_2 \int_{\{n-1 \leq u_k < n\}} |\nabla u_k|^p\,dx \\
	&= c_2 \int_{\Om} |\nabla \tau_{n-1}(u_k)|^p\,dx
	\end{split}
	\end{equation}
	while from \eqref{stimaBbasso} one has
	\begin{equation}\label{eq24}
  \int_{\Om} B(|\nabla u_k|)\tau_{n-1}(u_k)\,dx \geq 	\sigma \int_{\Om}|\nabla u_k|^p\tau_{n-1}(u_k)\,dx.
	\end{equation}
	
	\noindent Then, using \eqref{eq23} and \eqref{eq24} in \eqref{eq25}, one has 
	
	\begin{equation}\label{e103}
	\begin{split}
	c_2 \int_{\Om} |\nabla \tau_{n-1}(u_k)|^p\,dx  &+ \sigma \int_{\Om}|\nabla u_k|^p\tau_{n-1}(u_k)\,dx \\
	&\leq \vartheta \int_{\Om} T_k\left(\frac{u_k^q}{|x|^p}\right)\tau_{n-1}(u_k)\,dx + \int_{\Om} T_k(f)\tau_{n-1}(u_k)\,dx.
	\end{split}
	\end{equation}
	
	\noindent Moreover, from \eqref{defpsin-1} 
	\begin{equation}\label{eq22}
	\tau_{n-1}(u_k)|\nabla u_k|^p \geq |\nabla u_k|^p \chi_{\{u_k \geq n\}},
	\end{equation}
	
	\noindent hence \eqref{e103} becomes 
	\begin{equation}\label{e104}
	\begin{split}
	\sigma\int_{\{u_k \geq n\}} |\nabla u_k|^p\,dx  &\leq c_2 \int_{\Om} |\nabla \tau_{n-1}(u_k)|^p\,dx  + \sigma \int_{\Om}|\nabla u_k|^p\tau_{n-1}(u_k)\,dx \\
	&\leq \vartheta \int_{\Om} T_k\left(\frac{u_k^q}{|x|^p}\right)\tau_{n-1}(u_k)\,dx + \int_{\Om} T_k(f)\tau_{n-1}(u_k)\,dx.
	\end{split}
	\end{equation}

	\noindent By \eqref{u_kunifb}, $\{u_k\}_{k\in \mathbb{N}}$ is uniformly bounded in $\Wp$ on $k$ so, up to a subsequence, $u_k$ weakly converges in $\Wp$, strongly converges in $L^p(\Om)$ with $1\leq p < p^*$ and a.e. in $\Om$.\\ Then 
	\begin{equation}\label{misuranulla}
	\begin{split}
	&|\{x \in \Om: n-1 \leq u_k(x) < n \} | \rightarrow 0 \ \mbox{ if } n\rightarrow +\infty,\\
	&|\{x \in \Om: u_k(x) \geq  n \} | \rightarrow 0 \qquad \qquad \mbox{ if }
	n\rightarrow +\infty
	\end{split}
	\end{equation}
	
	\noindent uniformly on $k$.\\
	\noindent By \eqref{defpsin-1} and \eqref{e104} we get
	
	\begin{equation*}
	\begin{split}
	& \sigma\int_{\{u_k \geq n\}} |\nabla u_k|^p\,dx  \leq \vartheta \int_{\{n-1 \leq u_k < n\}} T_k\left(\frac{u_k^q}{|x|^p}\right)\tau_{n-1}(u_k)\,dx \\
	& \ +  \vartheta \int_{\{u_k \geq n\}} T_k\left(\frac{u_k^q}{|x|^p}\right)\tau_{n-1}(u_k)\,dx+ \int_{\{n-1 \leq u_k < n\}} T_k(f)\tau_{n-1}(u_k)\,dx \\
	& \ + \int_{\{u_k \geq n\}}T_k(f)\tau_{n-1}(u_k)\,dx.
	\end{split}
	\end{equation*}
	
	\noindent Then, by \eqref{misuranulla} and dominated convergence theorem, we have
	
	\begin{equation*}
	\lim\limits_{n \rightarrow +\infty} \int_{\{u_k \geq n\}} |\nabla u_k|^p\,dx =0.
	\end{equation*}
\end{proof}

\end{lem}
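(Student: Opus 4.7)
My plan is to test the truncated equation \eqref{P1_k} against $\tau_{n-1}(u_k):=T_1(G_{n-1}(u_k))$, which vanishes below height $n-1$, ramps linearly from $0$ to $1$ on $[n-1,n]$, and equals $1$ above height $n$. With this choice, $\nabla\tau_{n-1}(u_k)=\nabla u_k\,\chi_{\{n-1\leq u_k<n\}}$, so the diffusion term contributes a nonnegative integral supported in the thin layer $\{n-1\leq u_k<n\}$, which I discard. The zero-order part, thanks to the coercivity bound \eqref{stimaBbasso} and the fact that $\tau_{n-1}(u_k)\geq\chi_{\{u_k\geq n\}}$, satisfies
$$\int_\Omega B(|\nabla u_k|)\tau_{n-1}(u_k)\,dx \ \geq\ \sigma\int_{\{u_k\geq n\}}|\nabla u_k|^p\,dx.$$
Substituting in the weak formulation and using $\tau_{n-1}(u_k)\leq\chi_{\{u_k\geq n-1\}}$ on the right-hand side yields the key inequality
$$\sigma\int_{\{u_k\geq n\}}|\nabla u_k|^p\,dx \ \leq\ \vartheta\int_{\{u_k\geq n-1\}}\frac{u_k^q}{|x|^p}\,dx+\int_{\{u_k\geq n-1\}}f\,dx.$$

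Next I must show each integral on the right vanishes as $n\to+\infty$, uniformly in $k$. By \eqref{u_kunifb} the sequence $\{u_k\}$ is uniformly bounded in $W_0^{1,p}(\Omega)$, hence in $L^{p^*}(\Omega)$ via Sobolev embedding, so Chebyshev's inequality gives $|\{u_k\geq n-1\}|\to 0$ as $n\to+\infty$ uniformly in $k$. The $f$-piece then tends to $0$ uniformly in $k$ by absolute continuity of the Lebesgue integral, since $f\in L^1(\Omega)$. For the singular piece, I drop the truncation $T_k$ and apply H\"older with the conjugate exponents $p/q$ and $p/(p-q)$ together with Hardy-Sobolev (Lemma \ref{DHardy}):
$$\int_{\{u_k\geq n-1\}}\frac{u_k^q}{|x|^p}\,dx \ \leq\ \left(\int_\Omega\frac{u_k^p}{|x|^p}\,dx\right)^{\!q/p}\!\left(\int_{\{u_k\geq n-1\}}\frac{dx}{|x|^p}\right)^{\!(p-q)/p}.$$
The first factor is uniformly bounded by $(C_{N,p}\|\nabla u_k\|_p^p)^{q/p}$; the second tends to $0$ with the measure of the level set since $|x|^{-p}\in L^1(\Omega)$, because $p<N$.

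The main obstacle is securing \emph{uniformity in $k$}: each individual integral easily vanishes for fixed $k$ by dominated convergence, but the conclusion only becomes useful for the subsequent passage $k\to+\infty$ if the decay is independent of the truncation index. This uniformity is entirely supplied by the $k$-independent Dirichlet bound \eqref{u_kunifb}, which propagates through Sobolev embedding, Chebyshev, and Hardy-Sobolev to the displayed estimates above; dividing by $\sigma$ then concludes the proof.
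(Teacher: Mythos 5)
Your proof is essentially the same as the paper's: you use the same test function $\tau_{n-1}(u_k)=T_1(G_{n-1}(u_k))$ in the truncated weak formulation, drop the nonnegative diffusion contribution, and use the coercivity $B(t)\ge\sigma t^p$ together with $\tau_{n-1}(u_k)\ge\chi_{\{u_k\ge n\}}$ to arrive at the same key inequality bounding $\sigma\int_{\{u_k\ge n\}}|\nabla u_k|^p\,dx$ by the source integrals restricted to $\{u_k\ge n-1\}$.

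Where you improve on the paper's presentation is the endgame. The paper invokes ``\eqref{misuranulla} and dominated convergence theorem'' to deduce that the right-hand side vanishes \emph{uniformly in $k$}, which is vague: dominated convergence on its own gives convergence for each fixed $k$, and the uniformity is really being borrowed from the equi-integrability of $T_k(u_k^q/|x|^p)$ established elsewhere (Step~2 of the proof of Theorem~\ref{ExistenceM}). You instead make the uniformity self-contained: Chebyshev applied to the $k$-independent $W_0^{1,p}$ bound \eqref{u_kunifb} yields $|\{u_k\ge n-1\}|\to0$ uniformly in $k$; absolute continuity of $\int f$ then handles the $L^1$ datum; and H\"older with exponents $p/q$, $p/(p-q)$ combined with the Hardy--Sobolev inequality \eqref{DHardy} bounds the singular term by a $k$-uniform constant times $\left(\int_{\{u_k\ge n-1\}}|x|^{-p}\,dx\right)^{(p-q)/p}$, which vanishes since $|x|^{-p}\in L^1(\Omega)$ for $p<N$. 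This is a cleaner and more transparent justification of the uniformity in $k$ than what the paper writes.

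One small stylistic note: when you drop the truncation $T_k$ to bound the right-hand side, you are implicitly using $0\le T_k(g)\le g$ for $g\ge0$, which is valid here because $f\ge0$ and $u_k\ge0$; it is worth saying so.
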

\begin{lem}\label{lemmaconvTm}
	Consider $u_k\rightharpoonup u \mbox{ in } \Wp$. Then one has for every m
	$$T_m(u_k) \rightarrow T_m(u) \mbox{ in } \Wp \quad \mbox{ for }  k\rightarrow +\infty.$$
\end{lem}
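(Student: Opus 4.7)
The plan is to reduce the claim to strong $L^p(\Om)$ convergence of the gradients of the positive and negative parts of $v_k := T_m(u_k) - T_m(u)$. Since $T_m$ is a contraction on $\Wp$ and $\{u_k\}$ is uniformly bounded in $\Wp$ by \eqref{u_kunifb}, the sequence $\{T_m(u_k)\}$ is uniformly bounded in $\Wp \cap L^\infty(\Om)$. Using the a.e.\ convergence $u_k \to u$ (from the weak convergence in $\Wp$ and compact embedding), $T_m(u_k) \to T_m(u)$ a.e.\ and in $L^p(\Om)$, hence $T_m(u_k) \rightharpoonup T_m(u)$ weakly in $\Wp$. Applied to $v_k^{\pm}$ (bounded in $\Wp$ and converging to $0$ in $L^p$) the same reasoning gives $v_k^{\pm} \rightharpoonup 0$ weakly in $\Wp$. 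It thus suffices to establish $\nabla v_k^+ \to 0$ and $\nabla v_k^- \to 0$ in $L^p(\Om)$.

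For the positive part, I would test \eqref{P1_k} against $v_k^+$: since $v_k^+ \geq 0$ and $B(|\nabla u_k|) \geq 0$, the $B$-term is discarded, and the right-hand side $\vartheta \int T_k(u_k^q/|x|^p) v_k^+\, dx + \int T_k(f) v_k^+\, dx$ vanishes in the limit by Vitali's theorem, using $|v_k^+|\leq 2m$ together with the uniform integrability of $T_k(u_k^q/|x|^p)$ and $T_k(f)$ (both inherited from the estimates of Step~2 in the proof of Theorem~\ref{ExistenceM}). A splitting on $\{u_k\leq m\}$ and $\{u_k>m\}$, with the tail on $\{u_k>m\}$ estimated via Lemma~\ref{lim0} and H\"older, rewrites the resulting inequality as
\[
\int_\Om A(|\nabla T_m(u_k)|)\, \nabla T_m(u_k) \cdot \nabla v_k^+ \, dx \leq o(1).
\]
Subtracting $\int_\Om A(|\nabla T_m(u)|)\nabla T_m(u) \cdot \nabla v_k^+\, dx$, which tends to $0$ since $v_k^+ \rightharpoonup 0$ in $\Wp$ and $A(|\nabla T_m(u)|)\nabla T_m(u) \in L^{p/(p-1)}(\Om)$ by \eqref{D10} and \eqref{tA<K}, and then exploiting the monotonicity \eqref{D2} exactly as in the passage from \eqref{eq17} to \eqref{norma2+}, I conclude $\nabla v_k^+ \to 0$ in $L^p(\Om)$.

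For the negative part the direct choice $v_k^-$ fails because $\int B(|\nabla u_k|)v_k^-\, dx$ has the wrong sign. My plan is therefore to import the exponential device used in Step~2 of the existence proof and test \eqref{P1_k} against $\varphi = e^{-\hat{C} u_k} v_k^-$. Differentiating the weight produces the extra term $-\hat{C} e^{-\hat{C} u_k} A(|\nabla u_k|) |\nabla u_k|^2 v_k^-$, which combines with the $B$-contribution into
\[
e^{-\hat{C} u_k}\bigl[B(|\nabla u_k|) - \hat{C} A(|\nabla u_k|)|\nabla u_k|^2\bigr] v_k^- ;
\]
by \eqref{stimaBalto} the bracket is nonpositive and since $v_k^- \leq 0$ the whole term is nonnegative, so it can be dropped. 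The right-hand side is again $o(1)$ by Vitali (with $e^{-\hat{C} u_k}\leq 1$), and since $\{v_k<0\}\subseteq\{u_k<m\}$ no tail correction is needed. Dominated convergence yields $e^{-\hat{C} u_k} A(|\nabla T_m(u)|)\nabla T_m(u) \to e^{-\hat{C} u} A(|\nabla T_m(u)|)\nabla T_m(u)$ strongly in $L^{p/(p-1)}(\Om)$, which paired with $\nabla v_k^- \rightharpoonup 0$ in $L^p(\Om)$ closes the monotonicity step as before.

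The principal obstacle will be controlling the natural-growth term $B(|\nabla u_k|)$ when testing against the sign-indefinite pieces of $v_k$: for the negative part the naive choice gives only a one-sided estimate in the wrong direction, and the whole role of the weight $e^{-\hat{C} u_k}$ is to exchange this bad sign for a good one via \eqref{stimaBalto}, completely parallel to what was done for the sequence $\{w_n\}$. Once this Boccardo--Murat-type coupling is set up, the remaining work is the routine monotonicity--H\"older machinery already used in the existence proof.
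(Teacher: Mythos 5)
Your proposal is correct and follows the paper's overall strategy quite closely: Minkowski split into $(T_m(u_k)-T_m(u))^{\pm}$, test \eqref{P1_k} against the positive part directly and against an exponentially weighted negative part, kill the source term by Vitali, pass through the $\{u_k\le m\}$ / $\{u_k>m\}$ decomposition, and finish with the monotonicity machinery of \eqref{eq17}--\eqref{norma2+}. Two remarks on where you depart from, or slip relative to, the paper.

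First, your treatment of the negative part is genuinely simpler. To kill the cross term $\int e^{-\hat C T_m(u_k)}\,A(|\nabla T_m(u)|)\nabla T_m(u)\cdot\nabla(T_m(u_k)-T_m(u))^-\,dx$, the paper rewrites $e^{-\hat C T_m(u_k)}=e^{-\hat C(T_m(u_k)-T_m(u))^-}e^{-\hat C T_m(u)}$ on the support, integrates by parts to produce $\nabla\bigl(e^{-\hat C(T_m(u_k)-T_m(u))^-}-1\bigr)$, and then invokes weak convergence of $\{e^{-\hat C(T_m(u_k)-T_m(u))^-}-1\}$ to $0$ in $\Wp$. You observe instead that $e^{-\hat C u_k}A(|\nabla T_m(u)|)\nabla T_m(u)$ converges \emph{strongly} in $L^{p'}(\Om)$ by dominated convergence (uniformly bounded by $A(|\nabla T_m(u)|)|\nabla T_m(u)|\in L^{p'}$), and that pairing a strongly $L^{p'}$-convergent vector field against $\nabla(T_m(u_k)-T_m(u))^-\rightharpoonup 0$ in $L^p$ already gives the vanishing. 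This is shorter and avoids introducing the auxiliary sequence. (Note also that $e^{-\hat C u_k}v_k^-$ coincides with the paper's test function $e^{-\hat C T_m(u_k)}v_k^-$ on the support of $v_k^-$, which lies in $\{u_k\le m\}$, so your choice of weight is harmless.)

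Second, a minor but genuine misattribution: to dispose of the tail $\int_{\{u_k>m\}}A(|\nabla u_k|)(\nabla u_k,\nabla v_k^+)\,dx$ you invoke Lemma \ref{lim0}. That lemma controls $\int_{\{u_k\ge n\}}|\nabla u_k|^p$ as $n\to\infty$ uniformly in $k$; it says nothing for a \emph{fixed} truncation level $m$ as $k\to\infty$, which is what you need. The right mechanism (used in the paper in \eqref{eq34}) is that on $\{u_k>m\}$ one has $\nabla v_k^+=-\chi_{\{u<m\}}\nabla u$, H\"older gives a bound by $\|A(|\nabla u_k|)\nabla u_k\|_{L^{p'}}\,\|\chi_{\{u_k>m,\,u<m\}}\nabla u\|_{L^p}$, and the second factor tends to $0$ by dominated convergence since $\chi_{\{u_k>m,\,u<m\}}\to 0$ a.e.\ and $|\nabla u|^p\in L^1(\Om)$. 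With that substitution your argument is complete.
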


\begin{proof}
	Notice that	
	\begin{equation}\label{normTm}
	\begin{split}
	\norm{T_m(u_k)-T_m(u)}_{\Wp} &\leq \norm{(T_m(u_k)-T_m(u))^+}_{\Wp}\\
	& \ \ +\norm{(T_m(u_k)-T_m(u))^-}_{\Wp}.
	\end{split}
	\end{equation}
	\noindent First of all we study the asymptotic behaviour of $\norm{(T_m(u_k)-T_m(u))^+}_{\Wp}$.\\ If we take $(T_m(u_k)-T_m(u))^+$ as a test function in \eqref{P1_k} we get
	\begin{equation}\label{eq26}
	\begin{split}
	&\int_{\Om} A(|\nabla u_k|)(\nabla u_k, \nabla (T_m(u_k)-T_m(u))^+)\,dx +\int_{\Om} B(|\nabla u_k|)(T_m(u_k)-T_m(u))^+\,dx \\
	&= \int_{\Om} \left(\vartheta T_k\left(\frac{u_k^q}{|x|^p}\right) +T_k(f) \right)(T_m(u_k)-T_m(u))^+\,dx.
	\end{split}
	\end{equation}
	\noindent Since $T_m(u_k) \rightharpoonup T_m(u)$ in $\Wp$ and $T_m(u_k) \rightarrow T_m(u)$ a.e. in $\Omega$, we have $(T_m(u_k)-T_m(u))^+ \rightarrow 0$ a.e. in $\Om$ and $(T_m(u_k)-T_m(u))^+\rightharpoonup 0$ in $\Wp$. Thus, the right- hand side of \eqref{eq26}, using \eqref{conL1T_k} and dominated convergence theorem, tends to zero as $k$ goes to infinity. \\ Then, for $ k\rightarrow +\infty$
	\begin{equation}\label{eq27}
	\begin{split}
	&\int_{\Om} A(|\nabla u_k|)(\nabla u_k, \nabla (T_m(u_k)-T_m(u))^+)\,dx \\
	& \ + \int_{\Om} B(|\nabla u_k|)(T_m(u_k)-T_m(u))^+\,dx = o(1).
	\end{split}
	\end{equation}
	
	\noindent Let us note that equation \eqref{eq27} is equivalent to 
	\begin{equation}\label{eq28}
	\begin{split}
	&\int_{\Om} \big(A(|\nabla u_k|)\nabla u_k-A(|\nabla u|)\nabla u, \nabla (T_m(u_k)-T_m(u))^+\big)\,dx \\
	& \ \ \ + \int_{\Om} B(|\nabla u_k|)(T_m(u_k)-T_m(u))^+\,dx \\
	&= -\int_{\Om}A(|\nabla u|)(\nabla u, \nabla(T_m(u_k)-T_m(u))^+)\,dx +  o(1) =o(1),
	\end{split}
	\end{equation}
	since by $(T_m(u_k)-T_m(u))^+\rightharpoonup 0$ in $\Wp$ one has that
	
	\begin{equation*}
	\int_{\Om}A(|\nabla u|)(\nabla u, \nabla(T_m(u_k)-T_m(u))^+)\,dx =o(1) \mbox{ as } k\rightarrow +\infty.
	\end{equation*}
	
	\noindent We have that 
	\begin{equation}\label{gradTm+}
	\qquad \qquad (T_m(u_k)-T_m(u))^+=
	\begin{cases}
	(u_k - T_m(u))^+  &\mbox{ if } 0< u_k \leq m,\\
	m - T_m(u)  & \mbox{ if } u_k > m, 
	\end{cases}
	\end{equation}
	$$ \ \ \nabla (T_m(u_k)-T_m(u))^+ =\begin{cases}
	\nabla(u_k - u)^+  &\mbox{ if } u_k \leq m, u\leq m,\\
	0  &\mbox{ if } u_k \leq m, u>m,\\
	0  & \mbox{ if } u_k > m, u\geq m, \\
	- \nabla u  & \mbox{ if } u_k > m, u<m. 
	\end{cases}$$
	
	\noindent So, using \eqref{gradTm+} and the fact that $B$ is a positive function, \eqref{eq28} becomes
	\begin{equation}\label{eq29}
	\begin{split}
	& \int_{\Om \cap \{u_k\leq m, \  u\leq m\}} \Big(A(|\nabla u_k|)\nabla u_k-A(|\nabla u|)\nabla u, \nabla (u_k-u)^+\Big)\,dx   \\
	& \ \ \ + \int_ {\Om \cap \{u_k> m, u< m\}} \Big(A(|\nabla u_k|)\nabla u_k-A(|\nabla u|)\nabla u, -\nabla u\Big)\,dx \\
	&=\int_{\Om} \Big(A(|\nabla u_k|)\nabla u_k-A(|\nabla u|)\nabla u, \nabla (T_m(u_k)-T_m(u))^+\Big) \,dx \leq o(1).
	\end{split}
	\end{equation}
	\\
	
	\noindent We set:
	\begin{align*}\displaystyle
	&D:= \int_{\Om} \Big(A(|\nabla u_k|)\nabla u_k-A(|\nabla u|)\nabla u, \nabla (T_m(u_k)-T_m(u))^+\Big) \,dx,\\
	&D_1:= \int_{\Om \cap \{u_k\leq m, \  u\leq m\}} \Big(A(|\nabla u_k|)\nabla u_k-A(|\nabla u|)\nabla u, \nabla (u_k-u)^+\Big)\,dx, \\
	&D_2:=  \int_ {\Om \cap \{u_k> m, u< m\}} \Big(A(|\nabla u_k|)\nabla u_k-A(|\nabla u|)\nabla u, -\nabla u\Big)\,dx.  
	\end{align*}
	
	\begin{itemize}
		\item 
		
		For the term $D_1$, using \eqref{D2}, we have
		\begin{equation}\label{eq30}
		\begin{split}\displaystyle
		\qquad \quad D_1&= \int_{\Om \cap \{u_k\leq m, \  u\leq m\}} \Big(A(|\nabla u_k|)\nabla u_k-A(|\nabla u|)\nabla u, \nabla (u_k-u)^+\Big)\,dx \\
		&\geq c_2 \int_{\Om \cap \{u_k\leq m, \  u\leq m\}} (|\nabla u_k|+|\nabla u|)^{p-2}|\nabla(u_k-u)^+|^2 \,dx \geq 0.
		\end{split}
		\end{equation}
		
		\item 
		We write $D_2$ as 
		
		\begin{equation*}
		\begin{split}
		D_2&=D_{2,1}+D_{2,2}\\
		&= \int_{\Om \cap \{u_k> m, u< m\}} A(|\nabla u|)|\nabla u|^2\,dx \\
		& \ \ \ +\int_ {\Om \cap \{u_k> m, u< m\}} A(|\nabla u_k|) \left(\nabla u_k, -\nabla u\right)\,dx.
		\end{split}
		\end{equation*}
		
		\noindent Then, by \eqref{tA<K} and \eqref{D10}, we have
		\begin{equation*}
		\begin{split}
		|D_{2,1}| & \leq \int_{\Om \cap \{u_k> m, u< m\}} |A(|\nabla u|)||\nabla u|^2\,dx \\
		& \leq KC_K \displaystyle \left|\Om \cap \{u_k> m, u< m\} \cap \{|\nabla u| < K\} \right| \\
		& \ \ \ + c_1 \int_{\Om} |\nabla u|^p \chi_{\Om \cap \{u_k> m, u< m\} \cap \{|\nabla u| \geq K\}} \,dx.
		\end{split}
		\end{equation*}
		
		\noindent Since $u_k \rightarrow u$ a.e. in $\Om$ for $k\rightarrow +\infty$, we get that $\chi_{\Om \cap \{u_k> m, \ u< m\}} \rightarrow 0$. Moreover, $|\nabla u|^p\in L^1(\Om)$, then using Dominated Convergence, one has that 
		
		$$|D_{2,1}| =o(1). $$
		\vskip 15pt
		\noindent Analogously, using H\"{o}lder inequality, \eqref{tA<K}, \eqref{D10}, dominated convergence theorem and \eqref{u_kunifb}
		
		\begin{equation*}
		\begin{split}
		&|D_{2,2}|= \Bigg| \int_{\Om \cap \{u_k> m, \ u< m\}} \left( A(|\nabla u_k|)\nabla u_k, \nabla u \right) \,dx \Bigg|  \\
		&\leq \left(\int_{\Om} |A(|\nabla u_k|)\nabla u_k|^{\frac{p}{p-1}}\,dx\right)^{\frac{p-1}{p}}\left(\int_{\Om \cap \{u_k> m, \ u< m\}}|\nabla u|^p\,dx\right)^{\frac{1}{p}} \\
		&  \leq \left( C_K^{\frac{p}{p-1}}|\Om \cap \{|\nabla u_k| < K\}| + c\int_{\Om\cap \{|\nabla u_k| \geq K\}} |\nabla u_k|^p\,dx\right)^{\frac{p-1}{p}}\\
		& \ \ \ \cdot \left(\int_{\Om \cap \{u_k> m, \ u< m\}}|\nabla u|^p\,dx\right)^{\frac{1}{p}}
		\end{split}
		\end{equation*}
		
		\noindent Arguing for that last integral as in $|D_{2,1}|$ and using the fact that $\Om$ is a bounded set and $u_k$ is uniformly bounded in $\Wp$, we get that 
		
		$$|D_{2,2}| =o(1) \mbox{ as } k\rightarrow +\infty.$$
		
		\noindent Then 
		\begin{equation}\label{eq31}
		|D_2| \leq |D_{2,1}|+|D_{2,2}| = o(1) \quad \Rightarrow \quad D_2 =o(1).
		\end{equation}
		
	\end{itemize}
	
	\noindent By \eqref{eq29}, \eqref{eq30}, \eqref{eq31} we obtain $$o(1)=D_2 < D_1 + D_2 = D \leq o(1),$$
	that is 
	\begin{equation}\label{=o(1)Tm+}
	D= \int_{\Om} \Big(A(|\nabla u_k|)\nabla u_k-A(|\nabla u|)\nabla u, \nabla (T_m(u_k)-T_m(u))^+\Big) \,dx = o(1).
	\end{equation}
	
	\noindent Since $(T_m(u_k)-T_m(u))^+\rightharpoonup 0$ in $\Wp$, from \eqref{=o(1)Tm+} we get 
	\begin{equation}\label{eq32}
	\int_{\Om} A(|\nabla u_k|)(\nabla u_k, \nabla (T_m(u_k)-T_m(u))^+) \,dx = o(1) \mbox{ as } k\rightarrow +\infty. 
	\end{equation}
	\\
	\noindent Since $T_m(u_k)=u_k$ on $\{0<u_k \leq m\}$, we estimate the left-hand side of \eqref{eq32} as 
	
	\begin{equation}\label{eq33}
	\begin{split}
	&o(1) = \int_{\Om} A(|\nabla u_k|)(\nabla u_k, \nabla (T_m(u_k)-T_m(u))^+) \,dx\\
	&= \int_{\Om \cap \{u_k > m\}} A(|\nabla u_k|)(\nabla u_k, \nabla (T_m(u_k)-T_m(u))^+)\,dx \\
	& + \int_{\Om \cap \{u_k \leq  m\}} A(|\nabla T_{m}(u)|)(\nabla T_{m}(u), \nabla (T_m(u_k)-T_m(u))^+)\,dx                      \\
	& + \int_{\Om \cap \{u_k \leq m\}}  \Big(A(|\nabla T_m(u_k)|)\nabla T_m(u_k)-A(|\nabla T_m(u)|)\nabla T_m(u), \nabla (T_m(u_k)-T_m(u))^+\Big) \,dx.
	\end{split}
	\end{equation}
	
	\noindent Let us denote 
	\begin{equation}\label{Chim}
	\chi_m:= \chi_{\Om \cap \{u_k> m\}}
	\end{equation}
	
	\noindent and consider the first term of the right-hand side of \eqref{eq33}. Using \eqref{gradTm+}, H\"{o}lder inequality \eqref{tA<K} and \eqref{D10}, we get
	
	\begin{equation}\label{eq34}
	\begin{split}
	&\Bigg|\int_{\Om} ( A(|\nabla u_k|)\nabla u_k, \chi_m \nabla T_m(u) )\,dx \Bigg| \\
	&\leq 
	\left(\int_{\Om} |A(|\nabla u_k|)\nabla u_k|^{\frac{p}{p-1}}\,dx\right)^{\frac{p-1}{p}}\left(\int_{\Om}|\chi_m\nabla T_m(u)|^p\,dx\right)^{\frac{1}{p}} \\
	&\leq \left( C_K^{\frac{p}{p-1}}|\Om \cap \{|\nabla u_k| < K\}| + c\int_{\Om\cap \{|\nabla u_k| \geq K\}} |\nabla u_k|^p\,dx\right)^{\frac{p-1}{p}} \norm{\chi_m\nabla T_m(u)}_p \\
	&\leq c(|\Om|+\norm{u_k}_{\Wp}^{p})^{\frac{p}{p-1}}\norm{\chi_m\nabla T_m(u)}_p.
	\end{split}
	\end{equation}
	
	\noindent Since $ \lim\limits_{k \rightarrow +\infty} \chi_m\nabla T_m(u) =0$ a.e. in $\Omega$, using dominated convergence theorem, we get 
	$$ \norm{\chi_m\nabla T_m(u)}_p \rightarrow 0 \mbox{ as } k\rightarrow +\infty$$
	and since $u_k$ is uniformly bounded in $\Wp$ on $k$ by \eqref{u_kunifb}, we obtain 
	$$ \Bigg|\int_{\Om} ( A(|\nabla u_k|)\nabla u_k, \chi_m \nabla T_m(u) )\,dx \Bigg| = o(1) \ \mbox{ as } k\rightarrow +\infty$$
	
	\noindent and in particular 
	\begin{equation}\label{1}
	\int_{\Om} ( A(|\nabla u_k|)\nabla u_k, \chi_m \nabla T_m(u) )\,dx  = o(1)\qquad \qquad \  \mbox{ as } k\rightarrow +\infty.
	\end{equation}

	\noindent For the second term of the right-hand side of \eqref{eq33}, we rewrite it as 
	\begin{equation}\label{eq35}
	\begin{split}
	&\int_{\Om \cap \{u_k \leq  m\}} A(|\nabla T_{m}(u)|)(\nabla T_{m}(u), \nabla (T_m(u_k)-T_m(u))^+)\,dx \\
	& = \int_{\Om} A(|\nabla T_{m}(u)|)(\nabla T_{m}(u), \nabla (T_m(u_k)-T_m(u))^+)\,dx \\
	& \ \ \  -\int_{\Om \cap \{u_k >  m\}} A(|\nabla T_{m}(u)|)(\nabla T_{m}(u), \nabla (T_m(u_k)-T_m(u))^+)\,dx.
	\end{split} 
	\end{equation}
	
	\noindent Since $(T_m(u_k)-T_m(u))^+\rightharpoonup 0$ in $\Wp$, the first term of the right-hand side goes to zero when $k \rightarrow +\infty$. Instead, for the second term, arguing as in \eqref{eq34}, we obtain that for $k\rightarrow +\infty$
	\begin{equation}
	\begin{split}
	&\Bigg|\int_{\Om \cap \{u_k > m\}} A(|\nabla T_{m}(u)|)(\nabla T_{m}(u), \nabla (T_m(u_k)-T_m(u))^+)\,dx \Bigg| \\
	& \leq c(|\Om|+\norm{T_m(u)}_{\Wp}^{p})^{\frac{p}{p-1}}\norm{\chi_m\nabla T_m(u)}_p = o(1) \quad \mbox{ as } k\rightarrow +\infty.
	\end{split}
	\end{equation}
	
	\noindent Then
	$$ \int_{\Om \cap \{u_k > m\}} A(|\nabla T_{m}(u)|)(\nabla T_{m}(u), \nabla (T_m(u_k)-T_m(u))^+)\,dx = o(1)$$
	
	\noindent as $k\rightarrow +\infty$ and equation \eqref{eq35} becomes
	\begin{equation}\label{2}
	\int_{\Om \cap \{u_k \leq  m\}} A(|\nabla T_{m}(u)|)(\nabla T_{m}(u), \nabla (T_m(u_k)-T_m(u))^+)\,dx =o(1).
	\end{equation}

	\noindent 
	Now, we have to estimate the last term in the right-hand side of \eqref{eq33}. We write

	\begin{equation}\label{eq36}
	\begin{split}
	&\int_{\Om \cap \{u_k \leq  m\}}  \Big(A(|\nabla T_m(u_k)|)\nabla T_m(u_k)-A(|\nabla T_m(u)|)\nabla T_m(u), \nabla (T_m(u_k)-T_m(u))^+\Big) \,dx \\
	& = \int_{\Om}  \Big(A(|\nabla T_m(u_k)|)\nabla T_m(u_k)-A(|\nabla T_m(u)|)\nabla T_m(u), \nabla (T_m(u_k)-T_m(u))^+\Big) \,dx\\
	&-\int_{\Om \cap \{u_k >  m\}}  \Big(A(|\nabla T_m(u_k)|)\nabla T_m(u_k)-A(|\nabla T_m(u)|)\nabla T_m(u), \nabla (T_m(u_k)-T_m(u))^+\Big) \,dx
	\end{split} 
	\end{equation}
	
	\noindent By \eqref{gradTm+}, the second term of the right-hand side of \eqref{eq36} becomes
	
	\begin{equation}\label{eq200}
	\begin{split}
	&\int_{\Om \cap \{u_k >  m\}} A(|\nabla T_m(u)|)|\nabla T_m(u)|^2\,dx
	\end{split}
	\end{equation}	
	
	\noindent We know that
	$$\int_{\Om \cap \{u_k >  m\}} |\nabla T_m(u)|^p\,dx = o(1) \mbox{ for } k\rightarrow +\infty,$$
	
   \noindent while using \eqref{tA<K} and \eqref{D10} we get
   
   \begin{equation}
   \begin{split}
   &\int_{\Om \cap \{u_k >  m\}} (A(|\nabla T_m(u)|)|\nabla T_m(u)|)^{\frac{p}{p-1}}\,dx \\
   & \leq C_K^{\frac{p}{p-1}} |\Om|+ c_1^{\frac{p}{p-1}} \int_{\Om \cap \{u_k >m\} \cap \{|\nabla T_m(u)| \geq  K\}}|\nabla T_m(u)|^p\,dx <+\infty
   \end{split}
   \end{equation}
	
	\noindent Then using Holder inequality in \eqref{eq200}, we obtain

	\begin{equation*}
	\begin{split}
	&\Bigg|\int_{\Om \cap \{u_k >  m\}} A(|\nabla T_m(u)|)|\nabla T_m(u)|^2\,dx \Bigg| \\
	& \leq  \left( \int_{\Om \cap \{u_k >  m\}} (A(|\nabla T_m(u)|)|\nabla T_m(u)|)^{\frac{p}{p-1}}\,dx \right)^{\frac{p-1}{p}} \left(\int_{\Om \cap \{u_k >  m\}} |\nabla T_m(u)|^p\,dx \right)^{\frac{1}{p}}      = o(1)
	\end{split}
	\end{equation*}	
	
 \noindent as  $k\rightarrow +\infty.$

	\noindent Therefore 
	\begin{equation}\label{eq37}
	\int_{\Om \cap \{u_k >  m\}} A(|\nabla T_m(u)|)|\nabla T_m(u)|^2\,dx = o(1) \quad  \mbox{ as } k\rightarrow +\infty.
	\end{equation}
	
	\noindent Using \eqref{eq37}, equation \eqref{eq36} becomes
	
	\begin{equation}\label{3}
	\begin{split}
	&\int_{\Om \cap \{u_k \leq  m\}}\Big(A(|\nabla T_m(u_k)|)\nabla T_m(u_k)-A(|\nabla T_m(u)|)\nabla T_m(u), \nabla (T_m(u_k)-T_m(u))^+\Big)\,dx \\
	& = \int_{\Om}\Big(A(|\nabla T_m(u_k)|)\nabla T_m(u_k)-A(|\nabla T_m(u)|)\nabla T_m(u), \nabla (T_m(u_k)-T_m(u))^+\Big)\,dx  +o(1)
	\end{split}
	\end{equation}
	
	\noindent Then, using  \eqref{1}, \eqref{2}, \eqref{3}, the equation \eqref{eq33} becomes
	\begin{equation*}
	\begin{split}
	&o(1)= \int_{\Om} A(|\nabla u_k|)(\nabla u_k, \nabla (T_m(u_k)-T_m(u))^+) \,dx \\
	&= \int_{\Om} \Big(A(|\nabla T_m(u_k)|)\nabla T_m(u_k)-A(|\nabla T_m(u)|)\nabla T_m(u), \nabla (T_m(u_k)-T_m(u))^+\Big) \,dx +o(1) \\
	& \mbox{ as } k \rightarrow +\infty.
	\end{split}
	\end{equation*}
	\\
	\noindent Using equations \eqref{D2} and \eqref{Dp2},  	
	\begin{equation}
	\begin{split}\displaystyle
	o(1) &=  \int_{\Om}  \Big(A(|\nabla T_m(u_k)|)\nabla T_m(u_k)-A(|\nabla T_m(u)|)\nabla T_m(u), \nabla (T_m(u_k)-T_m(u))^+\Big) \,dx \\ &\geq \begin{cases}\displaystyle
	c_2\int_{\Om} \frac{|\nabla(T_m(u_k)-T_m(u))^+|^2}{\left(|\nabla T_m(u_k)|+|\nabla T_m(u)|\right)^{2-p}}\,dx \quad & \mbox{ if } 1<p < 2,\\\displaystyle
	c_2\int_{\Om} |\nabla (T_m(u_k)-T_m(u))^+|^p\,dx & \mbox{ if } p\geq 2. 
	\end{cases}
	\end{split}
	\end{equation}
	\noindent Arguing as we have done from \eqref{eq17} to \eqref{norma2+}, we obtain
	\begin{equation}\label{Tm+}
	\norm{(T_m(u_k)-T_m(u))^+}_{\Wp} \rightarrow 0 \mbox{ as } k\rightarrow +\infty.
	\end{equation}	
	\vskip 4pt	
	\noindent For the study of the asymptotic behaviour of $\norm{(T_m(u_k)-T_m(u))^-}_{\Wp}$, we use $e^{-\hat{C}T_m(u_k)}(T_m(u_k)-T_m(u))^-$ as a test function in weak formulation of \eqref{P1_k} obtaining
	\begin{equation}\label{eq38}
	\begin{split}
	&\int_{\Om} e^{-\hat{C}T_m(u_k)}(A(|\nabla u_k|)\nabla u_k , \nabla (T_m(u_k)-T_m(u))^-)\,dx \\
	& \ \ \ - \int_{\Om} \hat{C}e^{-\hat{C}T_m(u_k)} \Big(A(|\nabla u_k|)\nabla u_k , \nabla T_m(u_k)\Big) (T_m(u_k)-T_m(u))^-  \,dx \\
	& \ \ \ + \int_{\Om}B(|\nabla u_k|) e^{-\hat{C}T_m(u_k)}(T_m(u_k)-T_m(u))^-  \,dx \\
	&= \int_{\Om}  \left(\vartheta T_k\left(\frac{u_{k}^q}{|x|^p}\right) +T_k(f)      \right )e^{-\hat{C}T_m(u_k)}(T_m(u_k)-T_m(u))^- \,dx 
	\end{split}
	\end{equation}
	
	\noindent As above, since $(T_m(u_k)-T_m(u))^- \rightharpoonup 0$ in $\Wp$ and $(T_m(u_k)-T_m(u))^-\rightarrow 0$ a.e. in $\Omega$, by dominated convergence theorem, the right-hand side of \eqref{eq38} tends to zero when $k$ goes to infinity. Moreover, splitting the domains of the integrals in the left-hand side of \eqref{eq38}, we get \\
	\begin{equation}\label{eq39}
	\begin{split}
	&\int_{\Om} e^{-\hat{C}T_m(u_k)}(A(|\nabla u_k|)\nabla u_k , \nabla (T_m(u_k)-T_m(u))^-)\,dx \\
	& \ \ \ - \int_{\Om \cap \{u_k > m\}} \hat{C}e^{-\hat{C}T_m(u_k)} \Big(A(|\nabla u_k|)\nabla u_k , \nabla T_m(u_k)\Big) (T_m(u_k)-T_m(u))^-  \,dx \\
	& \ \ \ - \int_{\Om \cap \{u_k \leq m\}} \hat{C}e^{-\hat{C}T_m(u_k)} \Big(A(|\nabla u_k|)\nabla u_k , \nabla T_m(u_k)\Big) (T_m(u_k)-T_m(u))^-  \,dx\\
	&\ \ \ + \int_{\Om \cap \{u_k > m\}}B(|\nabla u_k|) e^{-\hat{C}T_m(u_k)}(T_m(u_k)-T_m(u))^-  \,dx \\
	&\ \ \ + \int_{\Om \cap \{u_k \leq m\}}B(|\nabla u_k|) e^{-\hat{C}T_m(u_k)}(T_m(u_k)-T_m(u))^-  \,dx = o(1) \ \mbox{ as } k\rightarrow +\infty.
	\end{split}
	\end{equation}
	\\
	\noindent We have that
	\begin{equation}\label{gradTm-}
	\qquad \quad  (T_m(u_k)-T_m(u))^-=
	\begin{cases}
	(u_k - T_m(u))^-  &\mbox{ if } 0< u_k \leq m,\\
	0  & \mbox{ if } u_k > m, 
	\end{cases}
	\end{equation}
	$$\nabla (T_m(u_k)-T_m(u))^-=\begin{cases}
	\nabla(u_k - u)^-  &\mbox{ if } u_k \leq m, u\leq m,\\
	\nabla u_k  &\mbox{ if } u_k \leq m, u>m,\\
	0  & \mbox{ if } u_k > m.
	\end{cases}$$

	\noindent Recalling the definition of $\chi_m$ given in $\eqref{Chim}$, one has that $(T_m(u_k)-T_m(u))^-\chi_m=0$  and $T_m(u_k)=u_k$ on $\{0<u_k \leq m\}$, hence \eqref{eq39} becomes 
	
	\begin{equation}\label{eq40}
	\begin{split}
	&\int_{\Om} e^{-\hat{C}T_m(u_k)}(A(|\nabla u_k|)\nabla u_k , \nabla (T_m(u_k)-T_m(u))^-)\,dx \\
	&+ \int_{\Om \cap \{u_k \leq m\}} B(|\nabla T_m(u_k)|) e^{-\hat{C}T_m(u_k)}(T_m(u_k)-T_m(u))^- \,dx\\
	& - \int_{\Om \cap \{u_k \leq m\}}\hat{C}A(|\nabla T_m(u_k)|)|\nabla T_m(u_k)|^2 e^{-\hat{C}T_m(u_k)}(T_m(u_k)-T_m(u))^- \,dx\\
	& = o(1) \quad \mbox{ as } k\rightarrow +\infty.
	\end{split}
	\end{equation}

	\noindent Moreover, using \eqref{stimaBalto}, we have that
	
	\begin{equation}\label{eq201}
	\begin{split}
	&\int_{\Om} e^{-\hat{C}T_m(u_k)}\left(A(|\nabla u_k|)\nabla u_k , \nabla (T_m(u_k)-T_m(u))^-\right )\,dx \leq  o(1) \mbox{ as } k\rightarrow +\infty.
	\end{split}
	\end{equation}
	
	\noindent Recalling \eqref{gradTm-}, we define $A_1:= \{x\in \Om: u_k\leq m, u\leq m\}$ and $A_2:= \{x\in \Om: u_k\leq m, u>m\}$ and we split the integral in \eqref{eq201} as follows:
	\begin{equation}\label{*}
	\begin{split}
	&\int_{\Om} e^{-\hat{C}T_m(u_k)}\left(A(|\nabla u_k|)\nabla u_k , \nabla (T_m(u_k)-T_m(u))^-\right )\,dx \\
	&=  \int_{\Om \cap {A_1}} e^{-\hat{C}T_m(u_k)} \left(A(|\nabla u_k|)\nabla u_k , \nabla (u_k-u)^-\right ) \,dx \\
	& \ \ \ + \int_{\Om \cap A_2}e^{-\hat{C}T_m(u_k)} A(|\nabla u_k|)|\nabla u_k|^2 \,dx \leq o(1).
	\end{split}
	\end{equation}

 \noindent Since $(T_m(u_k)-T_m(u))^- \rightharpoonup 0$ in $\Wp$ for $k \rightarrow +\infty$, 
\begin{equation}\label{eq106.}
\begin{split}
&\int_{\Om} \left(A(|\nabla u_k|)\nabla u_k , \nabla (T_m(u_k)-T_m(u))^-\right )\,dx  \\
&= \int_{\Om} \left(A(|\nabla u_k|)\nabla u_k- A(|\nabla T_m(u)|)\nabla T_m(u), \nabla (T_m(u_k)-T_m(u))^-\right ) \,dx \\
&\ \ \ + \int_{\Om}\left(A(|\nabla T_m(u)|)\nabla T_m(u), \nabla (T_m(u_k)-T_m(u))^-\right ) \,dx \\ 
&= \int_{\Om} \left(A(|\nabla u_k|)\nabla u_k- A(|\nabla T_m(u)|)\nabla T_m(u), \nabla (T_m(u_k)-T_m(u))^-\right ) \,dx + o(1).\\
& = \int_{\Om \cap {A_1}} \left(A(|\nabla u_k|)\nabla u_k- A(|\nabla u|)\nabla u , \nabla (u_k-u)^-\right ) \,dx \\
& \ \ \ + \int_{\Om \cap A_2} A(|\nabla u_k|)|\nabla u_k|^2 \,dx + o(1)
\end{split}
\end{equation}

\noindent Moreover, one has that $e^{\hat{C}T_m(u_k)} \leq e^{m\hat{C}}=: \gamma_m $ uniformly on $k$.  Then, using \eqref{D2} and \eqref{*}, we estimate \eqref{eq106.} as  

\begin{equation}\label{eq107.}
\begin{split}
&\int_{\Om} \left(A(|\nabla u_k|)\nabla u_k , \nabla (T_m(u_k)-T_m(u))^-\right )\,dx   \\
&= \int_{\Om \cap {A_1}} e^{\hat{C}T_m(u_k)}e^{-\hat{C}T_m(u_k)}\left(A(|\nabla u_k|)\nabla u_k- A(|\nabla u|)\nabla u , \nabla (u_k-u)^-\right ) \,dx \\
& \ \ \ + \int_{\Om \cap A_2}  e^{\hat{C}T_m(u_k)}e^{-\hat{C}T_m(u_k)} A(|\nabla u_k|)|\nabla u_k|^2 \,dx + o(1)\\
&\leq \gamma_m \int_{\Om \cap {A_1}} e^{-\hat{C}T_m(u_k)}\left(A(|\nabla u_k|)\nabla u_k- A(|\nabla u|)\nabla u , \nabla (u_k-u)^-\right ) \,dx \\
& \ \ \ + \gamma_m \int_{\Om \cap A_2} e^{-\hat{C}T_m(u_k)} A(|\nabla u_k|)|\nabla u_k|^2 \,dx + o(1)\\
&\leq \gamma_m \Bigg[ \int_{\Om \cap {A_1}} e^{-\hat{C}T_m(u_k)}\left(A(|\nabla u_k|)\nabla u_k, \nabla (u_k-u)^-\right ) \,dx \\
& \ \ \ + \int_{\Om \cap A_2} e^{-\hat{C}T_m(u_k)} A(|\nabla u_k|)|\nabla u_k|^2 \,dx \Bigg]\\
& \ \ \ - \gamma_m \int_{\Om \cap A_1} e^{-\hat{C}T_m(u_k)}\left(A(|\nabla u|)\nabla u , \nabla (u_k-u)^-\right) \,dx + o(1)\\
& \leq - \gamma_m \int_{\Om \cap A_1} e^{-\hat{C}T_m(u_k)}\left(A(|\nabla u|)\nabla u , \nabla (u_k-u)^-\right) \,dx + (\gamma_m+1)o(1).
\end{split}
\end{equation}

\noindent Now, we estimate the right hand-side of \eqref{eq107.} in this way: 

\begin{equation}\label{eq900.}
\begin{split}
&\int_{\Om \cap A_1} e^{-\hat{C}T_m(u_k)}\left(A(|\nabla u|)\nabla u , \nabla (u_k-u)^-\right) \,dx\\ 
&=\int_{\Om} e^{-\hat{C}T_m(u_k)}\left(A(|\nabla T_m(u)|)\nabla T_m(u) , \nabla (T_m(u_k)-T_m(u))^-\right) \,dx\\
&=  \int_{\Om} e^{-\hat{C}(T_m(u_k)-T_m(u))^-} e^{-\hat{C}T_m(u)}\left(A(|\nabla T_m(u)|)\nabla T_m(u) , \nabla (T_m(u_k)-T_m(u))^-\right) \,dx
\\
&= -\frac{1}{\hat{C}}  \int_{\Om}e^{-\hat{C}T_m(u)}A(|\nabla T_m(u)|)\big(\nabla T_m(u), \nabla( e^{-\hat{C}(T_m(u_k)-T_m(u))^-}-1)\big) \,dx.
\end{split}
\end{equation}

\noindent Since $\{ e^{-\hat{C}(T_m(u_k)-T_m(u))^-}-1\}_{k\in \mathbb{N}}$ is uniformly bounded in $W_0^{1,p}(\Om)$, up to a subsequence there exists $g\in W_0^{1,p}(\Om)$ such that

\begin{equation}\label{convg.}
e^{-\hat{C}(T_m(u_k)-T_m(u))^-}-1 \rightharpoonup g \mbox{ as } n\rightarrow +\infty.
\end{equation}

\noindent In particular we note that $g \equiv 0$ since $T_m(u_k) \rightarrow T_m(u)$ a.e. in $\Om$ as $k\rightarrow +\infty.$

\noindent Hence, by \eqref{convg.}, for $k\rightarrow +\infty$

\begin{equation}\label{eq108.}
\int_{\Om}e^{-\hat{C}T_m(u)}A(|\nabla T_m(u)|)(\nabla T_m(u), \nabla (e^{-\hat{C}(T_m(u_k)-T_m(u))^-}-1)) \,dx \rightarrow 0. 
\end{equation}

\noindent Then, using \eqref{eq900.} and \eqref{eq108.} in \eqref{eq107.}, one has that 

	\vskip 20pt
	\begin{equation}\label{3<o(1)}
	\begin{split}
	&\int_{\Om} \left(A(|\nabla u_k|)\nabla u_k , \nabla (T_m(u_k)-T_m(u))^-\right )\,dx \leq  o(1) \mbox{ as } k\rightarrow +\infty.
	\end{split}
	\end{equation}
	
	\noindent Arguing in the similar way as we have done from \eqref{eq27} to \eqref{eq32}, we get
	\begin{equation}\label{3=o(1)}
	\begin{split}
	&\int_{\Om} \left(A(|\nabla u_k|)\nabla u_k , \nabla (T_m(u_k)-T_m(u))^-\right )\,dx =  o(1) \mbox{ as } k\rightarrow +\infty. 
	\end{split}
	\end{equation}

	\noindent In order to obtain the desired result, we proceed writing the left-hand side of \eqref{3=o(1)} as
	\begin{equation}\label{eq41}
	\begin{split}
	&o(1)  = \displaystyle \int_{\Om} A(|\nabla u_k|)\left(\nabla u_k , \nabla (T_m(u_k)-T_m(u))^-\right )\,dx\\
	&= \int_{\Om \cap \{u_k > m\}} A(|\nabla u_k|)(\nabla u_k, \nabla (T_m(u_k)-T_m(u))^-)\,dx \\
	& + \int_{\Om \cap \{u_k \leq  m\}} A(|\nabla T_{m}(u)|)(\nabla T_{m}(u), \nabla (T_m(u_k)-T_m(u))^-)\,dx                      \\
	&+ \int_{\Om \cap \{u_k \leq m\}}  \Big(A(|\nabla T_m(u_k)|)\nabla T_m(u_k)-A(|\nabla T_m(u)|)\nabla T_m(u), \nabla (T_m(u_k)-T_m(u))^-\Big) \,dx.
	\end{split}
	\end{equation}
	
	\noindent The first term of the right-hand side of \eqref{eq41}, by \eqref{gradTm-}, is zero since $(T_m(u_k)-T_m(u))^-\chi_m=0$.
	
	\noindent The second term of the right-hand side of \eqref{eq41} can be rewritten as 
	\begin{equation}\label{eq42}
	\begin{split}
	\displaystyle
	&\int_{\Om \cap \{u_k \leq  m\}} A(|\nabla T_{m}(u)|)(\nabla T_{m}(u), \nabla (T_m(u_k)-T_m(u))^-)\,dx  \\
	& = \int_{\Om} A(|\nabla T_{m}(u)|)(\nabla T_{m}(u), \nabla (T_m(u_k)-T_m(u))^-)\,dx \\
	& \ \ \ -\int_{\Om \cap \{u_k >  m\}} A(|\nabla T_{m}(u)|)(\nabla T_{m}(u), \nabla (T_m(u_k)-T_m(u))^-)\,dx.
	\end{split} 
	\end{equation}
	
	\noindent Since $(T_m(u_k)-T_m(u))^-\rightharpoonup 0$ in $\Wp$, the first term of the right-hand side goes to zero when $k \rightarrow +\infty$. While, the second term, is zero since $(T_m(u_k)-T_m(u))^-\chi_m=0$.\\
	
	\noindent Then, the equation \eqref{eq41} becomes
	\begin{equation}\label{eq43}
	\begin{split}
	o(1) &= \int_{\Om \cap \{u_k \leq m\}}  \Big(A(|\nabla T_m(u_k)|)\nabla T_m(u_k)-A(|\nabla T_m(u)|)\nabla T_m(u), \\
	& \ \ \ ...\nabla (T_m(u_k)-T_m(u))^-\Big) \,dx.
	\end{split}
	\end{equation}

	\noindent Now, we estimate the the right-hand side of \eqref{eq43}.\\ We write
	\begin{equation}\label{eq44}
	\begin{split}
	&\int_{\Om \cap \{u_k \leq  m\}}  \Big(A(|\nabla T_m(u_k)|)\nabla T_m(u_k)-A(|\nabla T_m(u)|)\nabla T_m(u), \nabla (T_m(u_k)-T_m(u))^-\Big) \,dx \\
	& = \int_{\Om}  \Big(A(|\nabla T_m(u_k)|)\nabla T_m(u_k)-A(|\nabla T_m(u)|)\nabla T_m(u), \nabla (T_m(u_k)-T_m(u))^-\Big) \,dx\\
	&-\int_{\Om \cap \{u_k >  m\}}  \Big(A(|\nabla T_m(u_k)|)\nabla T_m(u_k)-A(|\nabla T_m(u)|)\nabla T_m(u), \nabla (T_m(u_k)-T_m(u))^-\Big) \,dx.
	\end{split} 
	\end{equation}
	
	\noindent By \eqref{gradTm-}, the second term of the right-hand side is zero then equation \eqref{eq43} becomes
	\begin{equation}
	o(1) = \int_{\Om}  \Big(A(|\nabla T_m(u_k)|)\nabla T_m(u_k)-A(|\nabla T_m(u)|)\nabla T_m(u), \nabla (T_m(u_k)-T_m(u))^-\Big) \,dx.
	\end{equation}
	
	\noindent From equation \eqref{D2} and \eqref{Dp2},  	
	\begin{equation}
	\begin{split}
	o(1) &= \int_{\Om}  \Big(A(|\nabla T_m(u_k)|)\nabla T_m(u_k)-A(|\nabla T_m(u)|)\nabla T_m(u), \nabla (T_m(u_k)-T_m(u))^-\Big) \,dx \\ &\geq \begin{cases}
	\displaystyle
	c_2\int_{\Om} \frac{|\nabla(T_m(u_k)-T_m(u))^-|^2}{\left(|\nabla T_m(u_k)|+|\nabla T_m(u)|\right)^{2-p}}\,dx \quad & \mbox{ if } 1<p < 2,\\
	\displaystyle c_2\int_{\Om} |\nabla (T_m(u_k)-T_m(u))^-|^p\,dx & \mbox{ if } p\geq 2. 
	\end{cases}
	\end{split}
	\end{equation}
	
	\noindent Arguing as we have done from \eqref{eq17} to \eqref{norma2+}, we obtain
	\begin{equation}\label{Tm-}
	\norm{(T_m(u_k)-T_m(u))^-}_{\Wp} \rightarrow 0 \mbox{ as } k\rightarrow +\infty.
	\end{equation}

	\noindent Summarizing up, from \eqref{normTm} we get
	\begin{equation}\label{eq300}
	\norm{T_m(u_k)-T_m(u)}_{\Wp} \rightarrow 0 \mbox{ as } k\rightarrow +\infty.
	\end{equation}	
\end{proof}

\noindent Using Lemma \ref{lemmaconvTm}, up to a subsequence, we get

\begin{equation}\label{eq202}
|\nabla T_m(u_k)|^p \rightarrow |\nabla T_m(u)|^p \mbox{ a.e. in  } \Om, \mbox{ for } k\rightarrow +\infty.
\end{equation}

\noindent For almost everywhere $x$ fixed in $\Om$, we choose $\eta >> |u(x)|$. Since $u_k \rightarrow u$ a.e. in $\Om$ for $k\rightarrow +\infty$, we get that $|u_k(x)|<< \eta$ definitely. \\ Choosing $m=\eta$ in \eqref{eq202}, one has
\begin{equation*}
|\nabla u_k|^p \rightarrow |\nabla u|^p \mbox{ a.e. in } \Omega \mbox{ for } k\rightarrow +\infty.
\end{equation*}

%
%

\noindent In order to use Vitali Theorem we need to prove the uniform-integrability of $|\nabla u_k|^p$. Let $E \subset \Omega$ be a measurable set, then 
\begin{equation}\label{eq45}
\begin{split}
\int_{E}|\nabla u_k|^p\,dx &= \int_{E \cap \{u_k > m\}} |\nabla u_k|^p\,dx + \int_{E \cap \{u_k \leq m\}} |\nabla u_k|^p\,dx \\
& =  \int_{E \cap \{u_k > m\}} |\nabla u_k|^p\,dx +  \int_{E} |\nabla T_m(u_k)|^p\,dx.
\end{split}
\end{equation}

\noindent For fixed $\varepsilon >0$, using Lemma \ref{lim0} in the first term of \eqref{eq45}, there exists $\bar{m}$ that does not depend on $k$, such that 
\begin{equation*}
\int_{E \cap \{u_k > m\}} |\nabla u_k|^p\,dx \leq \int_{\Omega \cap \{u_k > m\}} |\nabla u_k|^p\,dx \leq \frac{\varepsilon}{2}.
\end{equation*}

\noindent By $\eqref{eq300}$ and up to a subsequence, there exists $v(x)\in L^1(\Omega)$ such that $|\nabla T_{\bar{m}}(u_k)|^p \leq v(x)$ a.e. in $\Omega$. So for $|E|$ small we get 
\begin{equation*}
\int_{E} |\nabla T_{\bar{m}}(u_k)|^p\,dx \leq \frac{\varepsilon}{2}.
\end{equation*}

\noindent Therefore, \begin{equation*}
\int_{E}|\nabla u_k|^p\,dx \leq \varepsilon.
\end{equation*}
is uniformly integrable on $k$.

Collecting the previous result, the left-hand side of \eqref{eq45} is uniformly integrable on $k$.\\

\noindent Then Vitali Theorem implies that
\begin{equation}\label{ukconvl1}
|\nabla u_k|^p \rightarrow |\nabla u|^p \mbox{ in } L^1(\Omega),
\end{equation}
that is equivalent to 
\begin{equation}\label{Cnorme}
\norm{u_k}_{\Wp} \rightarrow \norm{u}_{\Wp}.
\end{equation}

\vskip 20pt
\noindent \textit{Step 4:} In order to obtain a solution to the problem \eqref{P}, we note that by \cite[Theorem 1]{BrezisLieb}
%
%
%
$$\lim\limits_{k\rightarrow +\infty}\norm{u_k-u}_{\Wp} =0.$$

\noindent Moreover we observe that $B(|\nabla u_k|) \rightarrow B(|\nabla u|)$ in $L^1(\Omega)$. \\ Since B is a continuous function 
$$B(|\nabla u_k|) \rightarrow B(|\nabla u|) \mbox{ a.e. in } \Omega.$$

\noindent From \eqref{ukconvl1}, up to a subsequence, there exists $v(x)\in L^1(\Omega)$ such that $|\nabla u_k|^p \leq v(x)$ a.e. in $\Omega$.

\noindent Then, if $E \subset \Omega$ is a measurable set, using \eqref{stimaBalto}, \eqref{tA<K} and \eqref{D10}, we have that
\begin{equation*}
\begin{split}
&\int_{E} B(|\nabla u_k|)\,dx  \leq \hat{C}\int_{E} |A(|\nabla u_k|)||\nabla u_k|^2 \,dx \\
 &\leq KC_K\hat{C}|E\cap \{|\nabla u_k| < K\}| + c_1\hat{C} \int_{E\cap \{|\nabla u_k| \geq K\}} |\nabla u_k|^p\,dx \\
 & \leq KC_K|E| + c_1\hat{C} \int_E v(x)\,dx
\end{split}
\end{equation*}

\noindent and, since $v(x)\in L^1(\Omega)$, we get that $B(|\nabla u_k|)$ is uniformly integrable.\\

\noindent Using Vitali Theorem we have
$$ B(|\nabla u_k|) \rightarrow B(|\nabla u|)  \mbox{ in } L^1(\Omega).$$

\noindent Then, collecting all the previous results, by dominate convergence, passing to the limit for $k\rightarrow +\infty$ in 
\begin{equation*}
\begin{split}
\int_{\Omega} A(|\nabla u_k|)(\nabla u_k, \nabla \varphi )\,dx +\int_{\Om} B(|\nabla u_k|)\varphi\,dx  = \vartheta \int_{\Om} T_k&\left(\frac{u_k^q}{|x|^p}\right)\varphi \,dx+ \int_{\Om} T_k(f)\varphi\,dx \\& \forall \varphi \in W_0^{1,p}(\Om) \cap L^{\infty}(\Omega). 
\end{split}
\end{equation*}

\noindent we obtain
\begin{equation*}
\begin{split}
\int_{\Omega} A(|\nabla u|)(\nabla u, \nabla \varphi )\,dx +\int_{\Om} B(|\nabla u|)\varphi  \,dx= \vartheta \int_{\Om} &\frac{u^q}{|x|^p}\varphi \,dx + \int_{\Om} f \varphi\,dx \\
& \forall \varphi \in W_0^{1,p}(\Om) \cap L^{\infty}(\Omega). 
\end{split}
\end{equation*}

\end{proof}

\section{Regularity of the energy solution}
 \noindent Taking into account that our solution $u\in \Wp$, by \cite{Di,T} we get $C_{loc}^{1,\alpha}(\Omega\setminus \{0\})$ regularity for our solution; supposing that $\Omega$ is smooth, $C_{loc}^{1,\alpha}(\bar{\Omega}\setminus \{0\})$ regularity follows by \cite{Lieberman} while, from \cite{GT}, $u\in C^2(\Om \setminus (Z_u \cup \{0\}))$ follows. For this reason in what follows we will indicate by 
\begin{equation}\label{DerSeconde}\tilde{u}_{ij}(x):=
\begin{cases} \displaystyle
{u}_{x_ix_j}(x) & x\in \Om \setminus (Z_u \cup \{0\}),\\
0 & x\in Z_u
\end{cases}
\end{equation}

\noindent and $\tilde{\nabla}u_i$ stands for ''gradient'' of $(\tilde{u}_{i1}, ..., \tilde{u}_{iN})$.\\
\indent Next results give us the summability properties of the second derivative, main tool for the analysis of the qualitative properties of solutions of elliptic problems. The proofs follow the ideas introduced in \cite{ CanDeGS, DamSciu,EspST} therefore we include only some details.\\\\
\noindent We will assume that $A$ satisfies \eqref{D10}, \eqref{D2} and also 
\begin{equation}\label{hpregolarita}
-1< \inf\limits_{t> 0} \frac{tA'(t)}{A(t)}=: m_A \leq M_A:= \sup\limits_{t> 0} \frac{tA'(t)}{A(t)} <+\infty.
\end{equation}
\begin{rem}\label{teAlim}
  Recalling \cite[Proposition 4.1]{Cia14} it is already proved that
  $$
   A(1)\min\{t^{m_A},t^{M_A}\}\leq A(t)\leq A(1)\max\{t^{m_A},t^{M_A}\}.
  $$
Since $m_A>-1$ there exists $\eta\in[0,1)$ such that $m_A+\eta>0$; hence
$$
 \lim_{t\to 0}t^\eta A(t)=0
$$
and, moreover, it easy to check that $tA(t)$ is non-decreasing on $[0,+\infty)$.
\end{rem}

\noindent The following results deal with the study of regularity of solutions to the following problem
\begin{equation}\label{ProbPuntuale}
\begin{cases}
-\mbox{div}(A(|\nabla u|)\nabla u)+B(|\nabla u|)=h(x,u), &\mbox{ in } \Om,\\
u>0 & \mbox{ in } \Om,\\
u=0& \mbox{ on } \partial \Om,
\end{cases}
\end{equation}
 
\noindent where we set $ \displaystyle h(x,u):=\vartheta \frac{u^q}{|x|^p}+f(x)$ and $\displaystyle u_i:=\frac{\partial u}{\partial x_i}$.\\
\begin{thm}\label{Stima_D2_pt1}
	Assume that $\Om$ is a bounded smooth domain and $1<p<N$. Consider $u\in C^{1,\alpha}_{loc}(\bar{\Omega} \setminus \{0\})\cap C^2(\Om \setminus (Z_u \cup \{0\}))$ a solution to \eqref{ProbPuntuale},
	where $f \in W^{1,\infty}(\bar{\Om})$. We have 	
	
	\begin{equation}\label{reg1}
 \int_{E \setminus Z_u} \frac{A(|\nabla u|)|\nabla u_i|^2}{|x-y|^{\gamma}|u_i|^{\beta}} \,dx \leq \mathcal{C} \quad \forall i=1,...,N
	\end{equation}
	
	\noindent for any $E \Subset \Om \setminus \{0\}$ and uniformly for any $y \in E$, with 
	$$\mathcal{C}:= \mathcal{C}(\gamma,m_A,M_A, \beta, h,\|\nabla u\|_\infty,\rho)$$
	for $0 \leq \beta <1$ and $\gamma < (N-2)$ if $N \geq 3$ ($\gamma=0$ if $N=2$).\\
	
	\noindent Moreover, if we also assume that $f$ is nonnegative in $\Om$, we have that 
	
	\begin{equation}\label{reg1.2}
	\int_{\Om \setminus (Z_u \cup \{0\})} \frac{A(|\nabla u|)|\nabla u_i|^2}{|x-y|^{\gamma}|u_i|^{\beta}} \,dx \leq \mathcal{C} \quad \forall i=1,...,N
	\end{equation}
\end{thm}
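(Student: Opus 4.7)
The plan is to apply the weighted--testing technique introduced in \cite{DamSciu} and extended to general $A(t)$ in \cite{CanDeGS, EspST}, incorporating the first--order term $B(|\nabla u|)$. On the open set $\Om\setminus(Z_u\cup\{0\})$, where $u$ is $C^2$, differentiating \eqref{ProbPuntuale} with respect to $x_i$ yields the linearised equation
\begin{equation*}
-\partial_j\bigl[a_{jk}(\nabla u)\,u_{ik}\bigr]+\partial_i B(|\nabla u|)=h_{x_i}(x,u)+h_u(x,u)\,u_i,
\end{equation*}
with $a_{jk}(\xi):=A(|\xi|)\delta_{jk}+A'(|\xi|)\xi_j\xi_k/|\xi|$. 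By hypothesis \eqref{hpregolarita} the eigenvalues of $(a_{jk}(\xi))$ are $A(|\xi|)$ and $A(|\xi|)\bigl(1+|\xi|A'(|\xi|)/A(|\xi|)\bigr)$, both comparable to $A(|\xi|)$ with constants depending only on $m_A,M_A$. In particular the linearised principal part is uniformly elliptic with weight $A(|\nabla u|)$.

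\textbf{Test function and coercive bound.} Fix $y\in E$ and a cutoff $\eta\in C^\infty_c(\Om\setminus\{0\})$ with $\eta\equiv 1$ on a neighbourhood of $E$. For $\varepsilon>0$ set
\begin{equation*}
G_\varepsilon(t):=\frac{t}{(\varepsilon+|t|)^{\beta}},\qquad \varphi_\varepsilon:=\frac{G_\varepsilon(u_i)}{|x-y|^{\gamma}}\,\eta^{2},
\end{equation*}
and test the linearised equation against $\varphi_\varepsilon$ on the approximation $\Om_\delta:=\Om\setminus(Z_u^\delta\cup B_\delta(0))$, where $Z_u^\delta$ denotes the $\delta$--neighbourhood of $Z_u$; there $\varphi_\varepsilon$ is Lipschitz and admissible. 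A direct computation gives $G_\varepsilon'(t)=(\varepsilon+|t|)^{-\beta-1}[\varepsilon+(1-\beta)|t|]\ge (1-\beta)(\varepsilon+|t|)^{-\beta}$ for $\beta\in[0,1)$, so the leading divergence term produces the coercive contribution
\begin{equation*}
c(1-\beta)\int_{\Om_\delta}\frac{A(|\nabla u|)\,|\nabla u_i|^{2}}{(\varepsilon+|u_i|)^{\beta}\,|x-y|^{\gamma}}\,\eta^{2}\,dx,
\end{equation*}
which is the target quantity. The remainder contributions -- derivatives of $|x-y|^{-\gamma}$ and $\eta^{2}$, the first--order term (which by \eqref{crescitaB'} satisfies $|\partial_i B(|\nabla u|)|\le \hat C\,|\nabla u|\,A(|\nabla u|)\,|\nabla u_i|$ away from $Z_u$), and the source $(h_{x_i}+h_u u_i)\varphi_\varepsilon$ -- are at most linear in $|\nabla u_i|$. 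A Cauchy--Schwarz splitting absorbs one half of the coercive term and leaves expressions of the form
\begin{equation*}
\int_{\Om_\delta}A(|\nabla u|)\,\frac{(\varepsilon+|u_i|)^{-\beta}}{|x-y|^{\gamma+2}}\,\eta^{2}\,dx,
\end{equation*}
together with lower--order counterparts involving $|\nabla\eta|^{2}$. Thanks to $\|\nabla u\|_\infty<+\infty$ and Remark \ref{teAlim} one has $A(|\nabla u|)\in L^\infty(E)$, while $\gamma<N-2$ guarantees $|x-y|^{-\gamma-2}\in L^{1}_{\mathrm{loc}}$ uniformly in $y\in E$; the hypothesis $h\in W^{1,\infty}(\bar\Om)$ bounds the source. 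These bounds are uniform in $\delta$ and $\varepsilon$; passing $\delta\to 0$ by monotone convergence on the nonnegative coercive integrand, then $\varepsilon\to 0$ by Fatou on the left and dominated convergence on the right, gives \eqref{reg1}.

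\textbf{Global estimate and main obstacle.} For the global bound \eqref{reg1.2}, the assumption $f\ge 0$ together with the strong maximum principle and a Hopf--type boundary lemma for operators as in \eqref{ProbPuntuale} provides $|\nabla u|\ge c_0>0$ in a neighbourhood of $\partial\Om$; in that collar $Z_u$ is empty, $A(|\nabla u|)$ is bounded above and below, and a boundary--adapted cutoff $\eta$ (constructed as in \cite{CanDeGS, EspST}) can be pushed up to $\partial\Om$, yielding the global estimate when combined with \eqref{reg1}. The main technical obstacle is the admissibility of $\varphi_\varepsilon$ near $Z_u$: when $p>2$ the weight $A(|\nabla u|)$ degenerates there, while for $p\in(1,2)$ the bound $M_A<+\infty$ in \eqref{hpregolarita} is essential to control $A(|\nabla u|)$ from above via Remark \ref{teAlim}. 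In both regimes the $\delta$--approximation, combined with the strict inequality $\beta<1$ that keeps the coercive integrand locally integrable, is what allows the limit to be taken.
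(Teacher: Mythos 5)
Your strategy — differentiate the equation, test the linearised PDE against a truncation of $u_i$ weighted by $|x-y|^{-\gamma}$ and a cutoff, absorb the remainder by Cauchy--Schwarz, then cover $E$ by balls and use a Hopf argument near $\partial\Om$ — is the same one the paper follows (and the same as \cite{DamSciu,CanDeGS,EspST}). The global step and the role of the source term are handled correctly in outline.

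There is, however, a genuine gap in the choice of truncation. You take $G_\varepsilon(t)=t/(\varepsilon+|t|)^{\beta}$, which is smooth but does \emph{not} vanish in a neighbourhood of $t=0$; since $Z_u\subset\{u_i=0\}$, the resulting test function $\varphi_\varepsilon$ is not compactly supported in $\Om\setminus(Z_u\cup\{0\})$, which is exactly the class for which the linearised identity \eqref{linequ} of \cite[Lemma 2.1]{DamSciu} is established. To compensate you restrict to $\Om_\delta:=\Om\setminus(Z_u^\delta\cup B_\delta(0))$, but $\varphi_\varepsilon$ does not vanish on $\partial Z_u^\delta$, so integrating the (strong) linearised equation by parts on $\Om_\delta$ produces boundary terms of the type
\begin{equation*}
\int_{\partial Z_u^{\delta}} a_{jk}(\nabla u)\,u_{ik}\,\nu_j\,\varphi_\varepsilon\,d\sigma ,
\end{equation*}
and you give no argument that these vanish as $\delta\to0$. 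One cannot invoke $|Z_u|=0$ here, since that fact (Remark \ref{Rem|Zu|=0}) is a \emph{consequence} of the theorem being proved; and for $p>2$ the second derivatives may blow up on approach to $Z_u$ while $A(|\nabla u|)\to0$, so the product is not obviously controlled. The paper sidesteps this entirely by using the piecewise-linear truncation $G_{\varepsilon }(t)= (2t-2\varepsilon )\chi _{[\varepsilon,2\varepsilon ]}(t)+t\chi _{[2\varepsilon ,\infty )}(t)$, which vanishes identically on $\{|u_i|\le\varepsilon\}$; then $\psi=T_\varepsilon(u_i)H_\delta(|x-y|)\varphi_\rho^2$ has compact support away from $Z_u$ automatically and no excision of $Z_u$, nor any boundary term, is needed. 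The second regularisation $H_\delta$ is likewise a truncation near $y$, not a ball removed around the origin as you wrote. To repair your argument you should adopt the piecewise-linear $G_\varepsilon$ and the $H_\delta$ truncation; the rest of your computation then goes through with the same absorptions.

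A secondary slip: after Cauchy--Schwarz the remainder you display carries the factor $(\varepsilon+|u_i|)^{-\beta}$, which is not uniformly bounded as $\varepsilon\to0$. What actually comes out is a factor $|u_i|^{2-2\beta}(\varepsilon+|u_i|)^{\beta}\le \|\nabla u\|_\infty^{2-\beta}$, which is bounded and makes the remainder controllable by $\int A(|\nabla u|)|x-y|^{-\gamma-2}\eta^2$; this is what makes the restriction $\gamma<N-2$ and $\beta<1$ bite. This is easily corrected, but as written the remainder estimate does not close.
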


\begin{proof}	
	\noindent \textbf{I step:} Since $u>0$ in $\Om$, we have that $h(x,u)$ is locally Lipschtz with respect to $u$, uniformly in $x$ and $h(\cdot,u)$ is locally Lipschtz. Following \cite[Lemma 2.1]{DamSciu},  we can state that $u_i$ solves the linearized problem
	\begin{equation}  \label{linequ}
	\begin{split}
	&\int_\Omega A(|\nabla u|)(\tilde{\nabla} u_{i},\nabla \psi )\,dx+
	\int_\Omega \frac{A^{\prime }\left( \left\vert \nabla u\right\vert \right) }{\left\vert \nabla u\right\vert }\left( \nabla u,\tilde{\nabla} u_{i}\right) \left( \nabla u,\nabla \psi \right) dx\\
	&+\int_\Omega \frac{B^{\prime }\left( \left\vert \nabla u\right\vert \right) }{\left\vert \nabla u\right\vert }\left( \nabla u,\tilde{\nabla} u_{i}\right) \psi \,dx-\int_\Omega [h_{i}(x,u)+h_u(x,u)u_i] \, \psi \,dx=0,
	\end{split}
	\end{equation}
	for any $\psi\in C_c^\infty(\Om \setminus (Z_u \cup \{0\}))$.\\

	\noindent For every $\varepsilon, \delta >0$, we consider the following function:
	\begin{eqnarray*}
		&&G_{\varepsilon }(t):= (2t-2\varepsilon )\chi _{\lbrack \varepsilon
			,2\varepsilon ]}(t)+t\chi _{\lbrack 2\varepsilon ,\infty )}(t)  \quad \text{for } t>0,\\
		&&T_{\varepsilon }(t):=\frac{G_{\varepsilon }(t)}{|t|^{\beta }}\ , \ \ H_{\delta}(t):= \frac{G_{\delta}(t)}{|t|^{\gamma+1}}
	\end{eqnarray*}
	
	\vskip 10pt
	\noindent Then, if we consider $x_0 \in \Om \setminus \{0\}$, $B_{2\rho}(x_0)\subset \Om \setminus \{0\}$ and $\varphi_\rho \in C_{c}^{\infty }(B_{2\rho }(x_{0}))$ such that $\varphi_\rho =1\text{ in }B_{\rho }(x_{0})\text{ and }|\nabla \varphi_\rho |\leq \frac{%
		2}{\rho }$, arguing as in \cite[Theorem 1.2]{EspST} and 
	using the test function
	\begin{equation*}
	\psi= T_{\varepsilon}(u_i)H_{\delta}(|x-y|)\varphi_{\rho}^2,
	\end{equation*}
	we get that
	
	\begin{equation}\label{stimaprec}
	\begin{split}
	\min\{1,&1+m_{A}\}\int_{B_{2\rho}(x_0)} \frac{A(|\nabla u|)|\tilde{\nabla} u_{i}|^{2}}{|x-y|^{\gamma}} \left(\frac{G'_{\varepsilon}(u_{i})}{|u_{i}|^{\beta}}-\beta 
	\frac{G_{\varepsilon}(u_{i})}{|u_{i}|^{1+\beta}}\right) \varphi_\rho^2\,dx \\
	&-3\tilde{\vartheta} \int_{B_{2\rho}(x_0)} \frac{A(|\nabla u|)|\tilde{\nabla} u_{i}|^{2}}{|x-y|^\gamma |u_{i}|^\beta} \chi_{\{|u_{i}|\geq \varepsilon\}}\varphi_\rho^2\,dx \leq C_{3,4,5,6,7} \\
	& +\limsup_{\delta\to 0}\int_{B_{2\rho}(x_0)}[h_{i}(x,u)+h_u(x,u)u_i]T_\varepsilon(u_i)H_\delta(|x-y|)\varphi_\rho^2\,dx.
	\end{split}
	\end{equation}

	\noindent Exploiting the definition of $T_{\varepsilon}(t)$ and $H_{\delta}(t)$, we can estimate the last term of the right-hand side of \eqref{stimaprec} as 
	
	\begin{equation*}
	\begin{split}
	& \left|\int_{B_{2\rho}(x_0)}[h_{i}(x,u)+h_u(x,u)u_i] \ T_\varepsilon(u_i) \ H_\delta(|x-y|)\varphi_\rho^2\,dx \right|\\
	& \leq \int_{B_{2\rho}(x_0)}\frac{|h_{i}(x,u)||u_i|^{1-\beta}}{|x-y|^{\gamma}}\varphi^2_{\rho}\,dx + \int_{B_{2\rho}(x_0)}\frac{|h_{u}(x,u)||u_i|^{2-\beta}}{|x-y|^{\gamma}}\varphi^2_{\rho}\,dx \\
	&\leq \sup\limits_{x\in B_{2\rho}(x_0)} |h_{i}(x,u)| \norm{\nabla u}_{\infty}^{1-\beta}\int_{B_{2\rho}(x_0)} \frac{1}{|x-y|^{\gamma}}\,dx \\
	& \ \ \  + \norm{h_u(x,u)}_{\infty}\norm{\nabla u}_{\infty}^{2-\beta}\int_{B_{2\rho}(x_0)} \frac{1}{|x-y|^{\gamma}}\,dx \leq C_8. 
	\end{split}
	\end{equation*}

	\noindent Passing to the limit for $\varepsilon \rightarrow 0$ and applying Fatou Lemma in \eqref{stimaprec}, we obtain
	\begin{equation*}
	\begin{split}
	\left( \min\{1,1+m_{A}\}(1-\beta)-3\tilde{\vartheta} \right)&\int_{B_{2\rho}(x_{0})\setminus Z_u}\frac{A(|\nabla
		u|)|\nabla u_{i}|^{2}}{|x-y|^{\gamma }|u_{i}|^{\beta }}\varphi_\rho ^{2}\,dx\\
	&\leq C_{3,4,5,6,7} + C_8.
	\end{split}
	\end{equation*}

	\noindent Now we fix $\tilde{\vartheta} $ sufficiently small such that%
	\begin{equation*}
	\min\{1,1+m_{A}\}(1-\beta) -3\tilde{\vartheta} >0
	\end{equation*}
	so that,
	\begin{equation*}
	\int_{B_{\rho}(x_0) \setminus Z_u}\frac{A(|\nabla u|)|\nabla u_{i}|^{2}}{|x-y|^{\gamma
		}|u_{i}|^{\beta }}\,dx\leq \int_{B_{2\rho}(x_0) \setminus Z_u}\frac{A(|\nabla
		u|)|\nabla u_{i}|^{2}}{|x-y|^{\gamma }|u_{i}|^{\beta }}\varphi_\rho ^{2}\,dx\leq \mathcal{C}_0 
	\end{equation*}
	where $\mathcal{C}_0=\mathcal{C}_0(\gamma,m_A,M_A, \beta, h,\|\nabla u\|_\infty,\rho ,x_{0})$, for all $i=1,...,N$.\\

	\vskip 10pt
	
	\noindent \textbf{II step:} 
	\noindent Let us consider $E \Subset \Om \setminus \{0\}$.
	Since $\overline{E}$ is a compact set, there exists a finite number of $B_{\rho}(x_i)$ with $x_i \in \overline{E}$ such that 
	$$E \subset \overline{E} \subset \bigcup_{i=1}^{M}B_{\rho}(x_i).$$ 
	\noindent Hence, repeating the covering arguments exploited in \cite[Theorem 4.1]{CanDeGS} and by using the estimate obtained in Step I, we get
	\begin{equation}\label{SomDerS}
	\begin{split}
	\int_{E \setminus Z_u}\frac{A(|\nabla u|)|\nabla u_{i}|^{2}}{|x-y|^{\gamma
		}|u_{i}|^{\beta }}\,dx &\leq \sum_{i=1}^{M}\int_{B_{\rho}(x_i) \setminus Z_u}\frac{A(|\nabla u|)|\nabla u_{i}|^{2}}{|x-y|^{\gamma
		}|u_{i}|^{\beta }}\,dx + c \leq  \sum_{i=1}^{M} \mathcal{C}_i + c=:\mathcal{C}.
	\end{split}
	\end{equation}
	
%
	
	\noindent \textbf{III step:} If $\Om$ is a smooth domain and $f$ is a nonnegative function in $\Om$, by H\"{o}pf Lemma we get that $Z_u \cap \partial \Om = \emptyset$. Then, we can take $\delta > 0$ such that 
	$$|\nabla u| \neq 0 \mbox { in } I_{3\delta}(\partial \Om) \mbox{ and } A(|\nabla u|)>0 \mbox{ in } I_{3\delta}(\partial \Om).$$
	
	\noindent Since $u\in C^2(I_{3\delta}(\partial \Om))$, arguing as in \cite[Theorem 4.1]{CanDeGS} and using \eqref{SomDerS} we get \eqref{reg1.2}.
\end{proof}

\begin{cor}\label{CorforLou}
	Let $u\in C^1(\Om \setminus \{0\})$ be a weak solution to \eqref{ProbPuntuale} with $f \in W^{1,\infty}(\Om)$, $1< p< \infty$. Then $A(|\nabla u|)|\nabla u| \in W^{1,2}_{loc}(\Om \setminus \{0\}, \Real^N)$. \ \\
If $\Om$ is smooth, $u \in C^1(\bar{\Om} \setminus \{0\})$, $f$ is a nonnegative function and $f \in W^{1,\infty}(\bar{\Om})$, then $A(|\nabla u|)|\nabla u| \in W_{loc}^{1,2}(\bar{\Om} \setminus \{0\}, \Real^N)$.
\end{cor}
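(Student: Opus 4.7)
The plan is to verify the claim by working with the classical derivative of $V:=A(|\nabla u|)\nabla u$ on the regular set $\Om\setminus (Z_u\cup\{0\})$, bounding it in $L^2_{\mathrm{loc}}(\Om\setminus\{0\})$ by means of Theorem \ref{Stima_D2_pt1}, and then identifying this derivative with the distributional one through a cutoff argument across $Z_u$.

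First I would compute, on the open set $\Om\setminus(Z_u\cup\{0\})$ where $u\in C^2$,
$$
\partial_j V_i \;=\; A(|\nabla u|)\,\tilde{u}_{ij}+\frac{A'(|\nabla u|)}{|\nabla u|}\,(\nabla u,\tilde{\nabla} u_j)\,u_i.
$$
Using Cauchy--Schwarz, the bound $|u_i|\le|\nabla u|$, and the inequality $|tA'(t)|\le\max(|m_A|,M_A)\,A(t)$ that follows immediately from \eqref{hpregolarita}, I would deduce the pointwise control
$$
|\partial_j V_i|^2 \;\le\; C(m_A,M_A)\,A(|\nabla u|)^2\bigl(|\tilde{\nabla} u_i|^2+|\tilde{\nabla} u_j|^2\bigr).
$$

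Next I would integrate this over $E\Subset\Om\setminus\{0\}$ using Theorem \ref{Stima_D2_pt1} to absorb the extra factor of $A(|\nabla u|)$. If $m_A\ge 0$, Remark \ref{teAlim} ensures that $A(|\nabla u|)$ is locally bounded and the claim reduces at once to \eqref{reg1} with $\beta=\gamma=0$. When $m_A<0$, I would use $A(|\nabla u|)\le A(1)|\nabla u|^{m_A}$ from Remark \ref{teAlim} (on the region $|\nabla u|\le 1$, with $A(|\nabla u|)$ bounded otherwise), together with $|u_i|\le|\nabla u|$ and the fact that $t\mapsto t^{m_A}$ is decreasing, to obtain $A(|\nabla u|)\le A(1)\,|u_i|^{-|m_A|}$. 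This converts the dangerous factor into an admissible weight,
$$
A(|\nabla u|)^2|\tilde{\nabla} u_i|^2\;\le\; A(1)\,\frac{A(|\nabla u|)\,|\tilde{\nabla} u_i|^2}{|u_i|^{|m_A|}},
$$
and Theorem \ref{Stima_D2_pt1} applied with $\beta=|m_A|\in(0,1)$ and $\gamma=0$ closes the integration.

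Finally I would pass from the classical derivative on $\Om\setminus(Z_u\cup\{0\})$ to the weak one on $\Om\setminus\{0\}$. The vector field $V$ is continuous on $\Om\setminus\{0\}$ and vanishes on $Z_u$, since $tA(t)\to 0$ as $t\to 0^+$ by Remark \ref{teAlim} and $\nabla u=0$ on $Z_u$; moreover $|Z_u|=0$ as announced after Theorem \ref{thm1.3}. A standard cutoff $\eta_\varepsilon$ supported outside a shrinking open neighborhood of $Z_u$ allows integration by parts in the classical sense: the term $\int V_i\,\varphi\,\partial_j\eta_\varepsilon\,dx$ vanishes as $\varepsilon\to 0$ by continuity of $V$ on $Z_u$, while dominated convergence against the $L^2$-bound from the previous step handles the interior integral. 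For the global statement with $\Om$ smooth and $f\ge 0$, H\"opf's lemma forces $Z_u\cap\partial\Om=\emptyset$, so $|\nabla u|$ stays bounded below near $\partial\Om$ and the argument is repeated using \eqref{reg1.2} in place of \eqref{reg1}. The main obstacle is the case $m_A<0$ (i.e.\ $1<p<2$), where $A(|\nabla u|)$ genuinely blows up at critical points; the rescue is precisely the hypothesis $m_A>-1$ in \eqref{hpregolarita}, which makes $|m_A|<1$ an admissible choice of $\beta$ in \eqref{reg1} and turns the pointwise singularity of $A^2$ into an integrable weight via $|u_i|\le|\nabla u|$.
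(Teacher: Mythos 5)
The first part of your argument — the pointwise bound $|\partial_j V_i|^2\lesssim A^2(|\nabla u|)(|\tilde\nabla u_i|^2+|\tilde\nabla u_j|^2)$ and its reduction to Theorem \ref{Stima_D2_pt1} via $|u_i|\le|\nabla u|$ and the Cianchi--Maz'ya bound $A(t)\le A(1)t^{m_A}$ for small $t$ — is correct and matches the paper's computation in substance (the paper extracts a bounded factor $A(|\nabla u|)|\nabla u|^\eta$ for a single choice of $\eta\in[0,1)$ with $m_A+\eta>0$, rather than splitting on the sign of $m_A$; your choice $\beta=|m_A|$ and theirs $\beta=\eta>|m_A|$ are interchangeable).

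Where the proposal diverges, and where there is a genuine gap, is the passage from the classical derivative on $\Om\setminus(Z_u\cup\{0\})$ to the weak derivative on $\Om\setminus\{0\}$. The paper never integrates by parts across $Z_u$: it sets $\phi_n:=G_{1/n}(A(|\nabla u|)u_i)$, observes that $\phi_n$ vanishes on an open neighborhood of $Z_u$ (since the vector field is continuous and zero there), hence $\phi_n\in W^{1,2}(E)$, derives a \emph{uniform} $W^{1,2}(E)$ bound from the same integral estimate, and concludes by weak compactness and a.e.\ identification that the limit $A(|\nabla u|)u_i$ lies in $W^{1,2}(E)$. This sidesteps any boundary term entirely. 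Your cutoff $\eta_\varepsilon$ ``supported outside a shrinking neighborhood of $Z_u$'' is not obviously sufficient: if this is a distance-type cutoff, then $|\partial_j\eta_\varepsilon|\sim\varepsilon^{-1}$ on a collar around $Z_u$, and the assertion that $\int V_i\,\varphi\,\partial_j\eta_\varepsilon\,dx\to 0$ ``by continuity of $V$'' requires a quantitative control of the form $\omega_V(2\varepsilon)\,\varepsilon^{-1}\,|\{\varepsilon<\mathrm{dist}(\cdot,Z_u)<2\varepsilon\}|\to 0$, which is \emph{not} guaranteed for a generic closed set $Z_u$; mere continuity does not yield it. The argument can be saved by taking a cutoff keyed to the \emph{level sets of $|V|$} (in which case $|V\,\nabla\eta_\varepsilon|\lesssim|\nabla V|$ on $\{\varepsilon<|V|<2\varepsilon\}$ and the boundary term vanishes by dominated convergence), but then you are essentially reproducing the paper's $G_{1/n}$-truncation in disguise — that observation should be made explicit, not left implicit under the word ``standard.'' Finally, your appeal to $|Z_u|=0$ is out of logical order: that fact is established in Remark \ref{Rem|Zu|=0} \emph{as a consequence of} the present Corollary, so it cannot be used here; the paper's weak-compactness argument deliberately avoids it (and so does a careful version of yours, since the putative weak derivative is defined to vanish on $Z_u$).
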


\begin{proof}
	Since $h(x,u)$ is locally Lipschitz continuous, $u\in C^2(\Om \setminus (Z_u \cup \{0\}))$ (see \cite{GT}). Let us consider a compact set $E \subset \Om \setminus \{0\}$ and let us set 
	$$\displaystyle \phi_n \equiv G_{\frac{1}{n}}(A(|\nabla u|)u_{i}), \quad i=1, ..., N,$$
	
	\noindent where $\displaystyle G_{\frac{1}{n}}$ is defined as in Theorem \ref{Stima_D2_pt1}.
	
	\noindent Using definition of $G_{\frac{1}{n}}$, we get that $G'_{\frac{1}{n}}(A(|\nabla u|)u_{i}) \equiv 0$ on $Z_u$, then
	\begin{equation*}
	\begin{split}
	\frac{\partial \phi_n}{\partial x_j} = G'_{\frac{1}{n}}(A(|\nabla u|)u_{i}) \left[\frac{A'(|\nabla u|)}{|\nabla u|}<\nabla u, \nabla u_j> u_{i}+A(|\nabla u|)u_{ij}\right] \mbox{ on } E\setminus Z_u.
	\end{split}
	\end{equation*}
	
	\noindent We observe that $\phi_n \in W^{1,2}(E)$. 
	
	\noindent In fact, choosing $0<\eta<1$, using \eqref{hpregolarita}, Remark \ref{teAlim} and Theorem \ref{Stima_D2_pt1} we get
	
	\begin{equation*}
	\begin{split}
	&\int_E |\nabla \phi_n|^2\,dx \leq 4 \int_{E \setminus \left\{|A(|\nabla u|)u_{i}| \leq \frac{1}{n} \right\}} \sum_{j=1}^N \left[\frac{\partial }{\partial x_j}A(|\nabla u|)u_{i}\right]^2\,dx \\
	& \leq 4 \int_{E \setminus Z_u} \sum_{j=1}^N \left[ \frac{A'(|\nabla u|)}{|\nabla u|}<\nabla u, \nabla u_{j}> u_{i}+A(|\nabla u|)u_{ij}\right]^2\,dx\\
	& \leq 4 \int_{E \setminus Z_u} \sum_{j=1}^N \left[ \frac{A'(|\nabla u|)|\nabla u|}{|\nabla u|}|\nabla u_{j}| |\nabla u| +A(|\nabla u|)u_{ij}\right]^2\,dx\\
	& \leq 4 \int_{E \setminus Z_u} \sum_{j=1}^N \left[ M_A A(|\nabla u|)\norm{D^2u} +A(|\nabla u|)\norm{D^2u}\right]^2\,dx\\
	&\leq  4N \int_{E \setminus Z_u} (1+M_A)^2 A^2(|\nabla u|)\norm{D^2u}^2\,dx\\
	&= 4N(1+M_A)^2  \int_{E \setminus Z_u}A(|\nabla u|)|\nabla u|^{\eta} \frac{A(|\nabla u|)\norm{D^2u}^2}{|\nabla u|^{\eta}}\,dx\\
	&\leq  4N(1+M_A)^2 \sup\limits_{E\setminus Z_u} A(|\nabla u|)|\nabla u|^{\eta} \int_{E \setminus Z_u}\frac{A(|\nabla u|)\norm{D^2u}^2}{|\nabla u|^{\eta}}\,dx \leq K_1 \quad \forall n\in \mathbb{N}.
	\end{split}
	\end{equation*}
	
	\noindent In the same way we can prove that $\norm{\phi_n}_{2}^2 \leq K_2$ $\forall n \in \mathbb{N}$.
	\noindent Since $W^{1,2}(E)$ is a reflexive space and $\norm{\phi_n}_{W^{1,2}(E)}\leq K := \sqrt{K_1+K_2}$ \  $\forall n \in \mathbb{N}$, up to a subsequence, there exists $h\in W^{1,2}(E)$ such that
	$$\phi_n \rightharpoonup h \mbox{ in } W^{1,2}(E)$$
	
	\noindent Moreover, using Sobolev compact embedding, we get
	\begin{align*}
	&\phi_n \rightarrow h \mbox{ in } L^2(E)\\
	&\phi_n \rightarrow h \mbox{ almost everywhere in } E.
	\end{align*}
	
	\noindent Since $\phi_n\xrightarrow{n \rightarrow +\infty} A(|\nabla u|)u_{i}$ almost everywhere in $E$, we get 
	
	$$ A(|\nabla u|)u_{i} \equiv h \in W^{1,2}(E)$$
	
	\noindent and by arbitrariness of $i\in \{1,...,N\}$ we have that $A(|\nabla u|)\nabla u \in  W^{1,2}_{loc}(\Om \setminus \{0\}, \Real^N)$.
	
	\vskip 10 pt
	\noindent If moreover $\Om$ is smooth, $u \in C^1(\bar{\Om} \setminus \{0\})$, $f$ is a nonnegative function and $f \in W^{1,\infty}(\Om)$, we get that $u\in C^2(\bar{\Om} \setminus (Z_u \cup \{0\}))$ and $A(|\nabla u|)\nabla u \in  W_{loc}^{1,2}(\bar{\Om} \setminus \{0\},\Real^N)$.

\end{proof}

\begin{rem}\label{Rem|Zu|=0}
	If $u$ is a weak solution of Problem \eqref{P}
	with $f$ nonnegative, by Corollary \ref{CorforLou}, since $A(|\nabla u|)|\nabla u| \in W_{loc}^{1,2}(\bar{\Om}\setminus \{0\})$, we get that $u$ solves Problem \eqref{P} almost everywhere in $\Om$.
	In particular, since $B(0)=0$, we get that $-\mbox{div}(A(|\nabla u|)\nabla u)=0$ on $Z_u$, that is absurd since $f$ is nonnegative.\\ Then 
	$$|Z_u|=0.$$
\end{rem}

\begin{rem}\label{IstimasuOm}
	Using Remark \ref{Rem|Zu|=0}, since $f$ is a nonnegative function, \eqref{reg1.2} becomes

	\begin{equation*}
	\int_{\tilde{\Om}} \frac{A(|\nabla u|)|\nabla u_i|^2}{|x-y|^{\gamma}|u_i|^{\beta}} \,dx \leq \mathcal{C} \quad \forall i=1,...,N
	\end{equation*}

	\noindent for any $\tilde{\Om} \Subset \Om \setminus \{0\}$ and uniformly for any $y \in \tilde{\Om}$, with 
	$$\mathcal{C}:= \mathcal{C}(\gamma,m_A,M_A, \beta, h,\|\nabla u\|_\infty,\rho)$$
	for $0 \leq \beta <1$ and $\gamma < (N-2)$ if $N \geq 3$ ($\gamma=0$ if $N=2$).\\
	
	\noindent Using \eqref{D20} and the fact that $|u_i| \leq |\nabla u|$, we also obtain
	\begin{equation*}
		c_2\int_{\tOm} \frac{|\nabla u|^{p-2-\beta}|\nabla u_i|^2}{|x-y|^{\gamma}} \,dx \leq
	\int_{\tilde{\Om}} \frac{A(|\nabla u|)|\nabla u_i|^2}{|x-y|^{\gamma}|u_i|^{\beta}} \,dx \leq \mathcal{C} \quad \forall i=1,...,N.
	\end{equation*}
	
\end{rem}

\noindent Next Theorems concern the summability of $(A(|\nabla u|))^{-1}$. The first is obtained supposing that $p>2$.
\begin{thm}
	\label{Local_sum_weight}
	Let $u\in C^{1,\alpha}(\Omega\setminus \{0\})\cap C^{2}(\Omega\setminus (Z_u \cup \{0\}))$ be a solution to \eqref{ProbPuntuale} with $f\in W^{1,\infty}(\Omega)$ and
	$f(x)>0$ in $B_{2 \rho}(x_0) \subset \Omega \setminus \{0\}$ (where $B_{2 \rho}(x_0)$ is defined in the proof of Theorem \ref{Stima_D2_pt1}) . Let us suppose that $p>2$. Then 
	\begin{equation}
	\label{Eq_2}
	\int_{B_{\rho}(x_0)} \frac{1}{(A(|\nabla u|))^{\alpha r}} \frac{1}{|x-y|^\gamma} dx\le \mathcal{C}
	\end{equation}
	for any $y\in B_{\rho}(x_0)$, with $\displaystyle \alpha:=\frac{p-1}{p-2}$, $r\in (0,1)$, $\gamma < N-2$ if $N \ge 3$, $\gamma=0$ if $N=2$ and $$\mathcal{C}=\mathcal{C}(\gamma, \eta, h, ||\nabla u||_\infty, \rho, x_0, \alpha, M_A,c_2,\tau, \hat{C}).$$
	
	\noindent If $\Om$ is a smooth domain and $f$ is nonnegative in $\Om$
	\begin{equation}\label{reg2.1}
	\int_{\tOm} \frac{1}{(A(|\nabla u|))^{\alpha r}} \frac{1}{|x-y|^\gamma}dx \le \mathcal{C},
	\end{equation}
	for any $\tilde{\Om} \Subset \Om \setminus \{0\}$, $y\in \tOm$ and $\displaystyle \alpha:=\frac{p-1}{p-2}$, $r\in (0,1)$.
	
\end{thm}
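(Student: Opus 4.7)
The strategy, adapting \cite{DamSciu,CanDeGS,EspST} to our setting with the extra lower-order term $B(|\nabla u|)$, is to convert the summability of $(A(|\nabla u|))^{-\alpha r}$ near $Z_u$ into summability of second-order derivatives already provided by Theorem \ref{Stima_D2_pt1}. For a small $\delta>0$ to be chosen, split $B_\rho(x_0)$ into $\{|\nabla u|\ge\delta\}$ and $\{|\nabla u|<\delta\}$: on the former, Remark \ref{teAlim} makes $(A(|\nabla u|))^{-\alpha r}$ bounded while $|x-y|^{-\gamma}$ is integrable since $\gamma<N-2<N$, so that part of the integral is trivially finite. The substantial part concerns the near-critical region $\{|\nabla u|<\delta\}\cap B_\rho(x_0)\setminus Z_u$, on which a pointwise lower bound for $A(|\nabla u|)$ is extracted from the equation. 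Continuity of $f$ yields $\eta>0$ with $f\ge \eta$ on $\overline{B_{2\rho}(x_0)}$; since $B(0)=0$ and $B$ is continuous, I pick $\delta$ so that $B(|\nabla u|)\le\eta/2$ on $\{|\nabla u|\le\delta\}$. Reading \eqref{ProbPuntuale} pointwise there gives
\[
\tfrac{\eta}{2}\le h(x,u)-B(|\nabla u|)=-A(|\nabla u|)\Delta u-\tfrac{A'(|\nabla u|)}{|\nabla u|}\langle D^2 u\,\nabla u,\nabla u\rangle ,
\]
and the structural bound $|tA'(t)|\le\max(|m_A|,M_A)A(t)$ from \eqref{hpregolarita} implies
\[
(A(|\nabla u|))^{-1}\le \tfrac{C(m_A,M_A)}{\eta}\,|D^2u|,\qquad\text{hence}\qquad (A(|\nabla u|))^{-\alpha r}\le C\,|D^2u|^{\alpha r}\le C\sum_{i=1}^{N}|\tilde{\nabla} u_i|^{\alpha r}.
\]

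It remains to estimate $\int|\tilde{\nabla} u_i|^{\alpha r}|x-y|^{-\gamma}\,dx$. Factoring and applying H\"older's inequality with conjugate exponents $2/(\alpha r)$ and $2/(2-\alpha r)$,
\[
|\tilde{\nabla} u_i|^{\alpha r}|x-y|^{-\gamma} = \Big[\tfrac{A(|\nabla u|)|\tilde{\nabla} u_i|^2}{|u_i|^{\beta}|x-y|^{\gamma_1}}\Big]^{\alpha r/2}\cdot A(|\nabla u|)^{-\alpha r/2}|u_i|^{\beta\alpha r/2}|x-y|^{\gamma_1\alpha r/2-\gamma},
\]
the first factor integrates to a finite quantity by Theorem \ref{Stima_D2_pt1} as soon as $\beta\in[0,1)$ and $\gamma_1<N-2$. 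In the second, using $|u_i|\le|\nabla u|$ together with the Remark \ref{teAlim} bound $|\nabla u|^{M_A}\le A(|\nabla u|)/A(1)$ (applicable on $\{|\nabla u|\le 1\}$, which covers the near-critical set once $\delta\le 1$) converts the factor $|u_i|^{\beta\alpha r/(2-\alpha r)}$ into a power of $A(|\nabla u|)$, reducing matters to an integral
\[
\int A(|\nabla u|)^{-\alpha r'}|x-y|^{-\gamma'}\,dx,\qquad \alpha r'=\frac{\alpha r(1-\beta/M_A)}{2-\alpha r},
\]
of the same form but with strictly smaller exponent, for a suitable $\beta\in[0,1)$. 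Hypotheses \eqref{D10}--\eqref{D2} combined with \eqref{hpregolarita} actually force $M_A\le p-2$, equivalently $\alpha\le 1+1/M_A$, which ensures the admissible range of $\beta$ is non-empty for every $r<1$ (and at every iteration, since $\alpha r'<\alpha r<\alpha$). A finite number of iterations drives the exponent on $A^{-1}$ down past zero, where the integral becomes trivially finite, bounded by $\sup A(|\nabla u|)^{\text{positive}}\cdot\int|x-y|^{-\gamma''}dx<\infty$. Composing the H\"older inequalities in reverse yields \eqref{Eq_2}.

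For the global estimate \eqref{reg2.1}: when $\Omega$ is smooth and $f\ge0$, the Hopf lemma, applied as in Step III of the proof of Theorem \ref{Stima_D2_pt1}, gives $Z_u\cap\partial\Omega=\emptyset$, so $A(|\nabla u|)$ is bounded below in a tubular neighborhood $I_{3\delta}(\partial\Omega)$; the integral over that neighborhood is trivial, and combining with the local interior estimate already established delivers the bound on any $\tOm\Subset\Omega\setminus\{0\}$. The main technical obstacle is the bookkeeping in the H\"older bootstrap: at each iteration one must choose $(\beta,\gamma_1)$ so that all admissibility conditions ($\beta\in[0,1)$, $\gamma_1<N-2$, and the induced $\gamma'<N-2$ on the reduced weight) continue to hold, while simultaneously securing a strict decrease of the exponent of $(A(|\nabla u|))^{-1}$. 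The hypothesis $r<1$ is precisely what keeps this simultaneous choice feasible throughout the iteration.
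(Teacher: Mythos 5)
Your proposal takes a genuinely different route from the paper: you read the equation pointwise on the set where $|\nabla u|$ is small, obtain $(A(|\nabla u|))^{-1}\le C|D^2u|$ there, and then try to bound $\int |D^2u|^{\alpha r}|x-y|^{-\gamma}\,dx$ by a H\"older/bootstrap scheme starting from Theorem~\ref{Stima_D2_pt1}. The paper instead inserts the test function $\phi=H_\delta\varphi_\rho^2/(\varepsilon+A(|\nabla u|))^{\alpha r}$ directly into the weak formulation and uses Young's inequality to keep $|D^2u|$ appearing only \emph{quadratically}, which matches the second-order estimate of Theorem~\ref{Stima_D2_pt1} exactly. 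This is not a cosmetic difference: it is precisely why the paper's proof works for all $p>2$, while yours does not.

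There are two concrete gaps. First, your H\"older split uses conjugate exponents $2/(\alpha r)$ and $2/(2-\alpha r)$, which requires $\alpha r<2$. But $\alpha=\frac{p-1}{p-2}>2$ whenever $2<p<3$, so for $r$ close to $1$ one has $\alpha r\ge 2$ and the split is meaningless; the target $\int|D^2u|^{\alpha r}|x-y|^{-\gamma}\,dx$ then requires higher integrability of $D^2u$ than Theorem~\ref{Stima_D2_pt1} provides, and the bootstrap never gets off the ground. (Note also that $A^{-\alpha r}\le C|D^2u|^{\alpha r}$ is a lossy replacement: the statement can hold even when the $D^2u$ integral diverges, so this reduction is not reversible.) Second, the structural inequality you invoke, $M_A\le p-2$, is false. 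From \eqref{D20} one has $A(t)\ge c_2t^{p-2}$ for all $t>0$, and combining with $A(t)\le A(1)t^{M_A}$ for $t\ge1$ (Remark~\ref{teAlim}) and letting $t\to\infty$ forces $M_A\ge p-2$; the opposite inequality $m_A\le p-2$ is what actually follows from \eqref{D10}/\eqref{D20}. Consequently the bound $|\nabla u|^{M_A}\le A/A(1)$ that you use to convert $|u_i|^\beta$ into a power of $A$ gives a smaller exponent than the one obtained from $A\ge c_2|\nabla u|^{p-2}$, and the window for $\beta$ you rely on $(\beta>M_A(\alpha r-1),\ \beta<1)$ is empty once $M_A>p-2$ and $r$ is close to $1$. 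Replacing $M_A$ by $p-2$ repairs the $\beta$ window but does not remove the $\alpha r<2$ obstruction. The paper's Young-inequality step, which trades $(\varepsilon+A)^{-\alpha r-1}$ for $\tau I_F+ C_\tau A^{2-\alpha r}|D^2u|^2/|x-y|^\gamma$ and then uses $A^{2-\alpha r}\le A^{1-r/(p-2)}$ on $\{A<1\}$, is the ingredient your argument lacks.
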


\begin{proof}
	\noindent In this proof, in order to simplify the notation we will indicate by $\{A>1\}:=\{x\in B_{2\rho}(x_0) : A(|\nabla u(x)|)>1\}$ and by  $\{A<1\}:=\{x\in B_{2\rho}(x_0) : A(|\nabla u(x)|)<1\}$. Moreover, in the sequel
	$$
	 I_F:=\int_{B_{2\rho}(x_0)} \frac{1}{(A(|\nabla u|))^{\alpha r}} \frac{\varphi_\rho^2}{|x-y|^\gamma}dx
	$$
	Following \cite{CanDeGS, DamSciu,  EspST}, for $\varepsilon>0$, $\varphi_\rho$, $H_\delta$  defined as in Theorem \ref{Stima_D2_pt1}, let us define $\displaystyle \phi= \frac{H_\delta(|x-y|) \varphi_\rho^2}{(\varepsilon + A(|\nabla u|))^{\alpha r}} $.
Note that $\phi$ is a good test function, then
\begin{equation}
\label{term_th2}
\begin{split}
& \int_{B_{2\rho}(x_0)}  \frac{h(x,u) H_\delta \  \varphi_\rho^2 }{(\varepsilon + A(|\nabla u|))^{\alpha r}}\,dx\\
&= -\alpha r \int_{B_{2\rho}(x_0)} \frac{A(|\nabla u|)A'(|\nabla u|)H_\delta \ \varphi_\rho^2}{(\varepsilon + A(|\nabla u|))^{\alpha r+1}} (\nabla u, \tilde{\nabla} |\nabla u| ) \,dx\\
& \quad + \int_{B_{2\rho}(x_0)} \frac{A(|\nabla u|)\varphi_\rho^2 }{(\varepsilon + A(|\nabla u|))^{\alpha r}} (\nabla u, \nabla_x H_\delta)\,dx \\
&\quad + 2  \int_{B_{2\rho}(x_0)} \frac{A(|\nabla u|)H_\delta \ \varphi_\rho}{(\varepsilon + A(|\nabla u|))^{\alpha r}} (\nabla u, \nabla \varphi_\rho) \,dx+  \int_{B_{2\rho}(x_0)} \frac{B(|\nabla u|)H_\delta \  \varphi_\rho^2}{(\varepsilon + A(|\nabla u|))^{\alpha r}} \,dx.
\end{split}
\end{equation}
Since the source term $h$ is positive, calling $\displaystyle c_h(\rho):=\min_{B_{2\rho}}h(x,u)$ we deduce that
\begin{equation}
\label{estimate}
c_h(\rho)\int_{B_{2\rho}(x_0)}  \frac{H_\delta \  \varphi_{\rho}^2}{(\varepsilon + A(|\nabla u|))^{\alpha r}}  \,dx \le |I_1|+ |I_2|+ |I_3|+ |I_B|\,.
\end{equation}
where
\begin{equation}
\label{I_a_b_c_d}
\begin{split}
&I_1=   -\alpha r \int_{B_{2\rho}(x_0)} \frac{A(|\nabla u|)A'(|\nabla u|)}{(\varepsilon + A(|\nabla u|))^{(\alpha r+1)}} (\nabla u, \tilde{\nabla} |\nabla u| ) H_\delta \ \varphi_\rho^2 \,dx\\
&I_2=  \int_{B_{2\rho}(x_0)} \frac{A(|\nabla u|)}{(\varepsilon + A(|\nabla u|))^{\alpha r}} (\nabla u, \nabla_x H_\delta)\varphi_\rho^2 \,dx \\
&I_3= 2  \int_{B_{2\rho}(x_0)} \frac{A(|\nabla u|)}{(\varepsilon + A(|\nabla u|))^{\alpha r}} (\nabla u, \nabla \varphi_\rho)H_\delta \ \varphi_\rho \,dx \\
&I_B= \int_{B_{2\rho}(x_0)} \frac{B(|\nabla u|)}{(\varepsilon + A(|\nabla u|))^{\alpha r}} H_\delta \  \varphi_\rho^2\,dx.
\end{split}
\end{equation}
The idea of the proof is to show that all above integrals are bounded and then our estimate can be obtained by using Fatou Lemma.\\
By Young inequality $ab \le \tau a^2+ \frac{b^2}{4 \tau}$, we observe that
\begin{equation}
\label{IB}
\begin{split}
&\int_{B_{2\rho}(x_0)} \frac{B(|\nabla u|)}{(\varepsilon + A(|\nabla u|))^{\alpha r}} \frac{\varphi_\rho^2}{|x-y|^{\gamma}} \,dx \\\
&\le \int_{B_{2\rho}(x_0)} \frac{\varphi_{\rho}}{(\varepsilon + A(|\nabla u|))^\frac{{\alpha r}}{2}|x-y|^\frac{\gamma}{2}}   \ \frac{B(|\nabla u|)\varphi_{\rho}}{(\varepsilon + A(|\nabla u|))^\frac{{\alpha r}}{2}|x-y|^\frac{\gamma}{2}} \,dx \\
&\le \tau I_F+ \frac{1}{4 \tau} \int_{B_{2\rho}(x_0)} \frac{B^2(|\nabla u|)}{(A(|\nabla u|))^{\alpha r}} \ \frac{\varphi_{\rho}^2}{ |x-y|^{\gamma}} \,dx\\
& \le \tau I_F+ \frac{\hat{C}^2}{4 \tau} \int_{B_{2\rho}(x_0)} \frac{A^2(|\nabla u|)|\nabla u|^4}{(A(|\nabla u|))^{\alpha r}} \ \frac{ \varphi_{\rho}^2}{ |x-y|^{\gamma}}\,dx\\
& \le \tau I_F+ \frac{\hat{C}^2}{4 \tau}\left[ \int_{\{A<1\}}\frac{A^2(|\nabla u|)|\nabla u|^4}{(A(|\nabla u|))^{\alpha r}} \ \frac{\varphi_{\rho}^2}{ |x-y|^{\gamma}} \,dx
 +\int_{\{A>1\}}\frac{A^2(|\nabla u|)|\nabla u|^4}{(A(|\nabla u|))^{\alpha r}} \ \frac{\varphi_{\rho}^2}{ |x-y|^{\gamma}} \,dx\right].
\end{split}
\end{equation}
Since $p>2$ and $\alpha r>r>0$, we get that $A(|\nabla u|))<A(|\nabla u|))^{r}$ on the set $\{A<1\}$ therefore by \eqref{D20}
\begin{equation*}
\begin{split}
&\int_{B_{2\rho}(x_0)} \frac{B(|\nabla u|)}{(\varepsilon + A(|\nabla u|))^{\alpha r}} \frac{ \varphi_\rho^2}{|x-y|^{\gamma}} dx \\
 & \le \tau I_F+ \frac{\hat{C}^2}{4 \tau} \int_{\{A<1\}} \frac{A(|\nabla u|)|\nabla u|^4}{(A(|\nabla u|))^{\frac{r}{p-2}}} \ \frac{\varphi_{\rho}^2}{ |x-y|^{\gamma}} \,dx\\
& \quad +\frac{\hat{C}^2}{4 \tau} \int_{\{A>1\}} A^2(|\nabla u|)|\nabla u|^4\frac{\varphi_{\rho}^2}{ |x-y|^{\gamma}} \,dx \\
& \le \tau I_F+ \frac{\hat{C}^2}{4 \tau c_2^{\frac{r}{p-2}}} \int_{\{A<1\}} A(|\nabla u|)|\nabla u|^{4-r} \ \frac{\varphi_{\rho}^2}{ |x-y|^{\gamma}} \,dx\\
& \quad +\frac{\hat{C}^2}{4 \tau}\sup_{B_{2 \rho}(x_0)} (|\nabla u|^4 A^{2}(|\nabla u|)) \int_{\{A>1\}} \frac{\varphi_{\rho}^2}{ |x-y|^{\gamma}} \,dx\\
& \le \tau I_F+ \frac{\hat{C}^2}{4 \tau c_2^{\frac{r}{p-2}}} \sup_{B_{2 \rho}(x_0)} (|\nabla u|^{4-r} A(|\nabla u|)) \int_{\{A<1\}} \frac{\varphi_{\rho}^2}{ |x-y|^{\gamma}} \,dx\\
&\quad+\frac{\hat{C}^2}{4 \tau}\sup_{B_{2 \rho}(x_0)} (|\nabla u|^4 A^{2}(|\nabla u|)) \int_{\{A>1\}} \frac{\varphi_{\rho}^2}{ |x-y|^{\gamma}} \,dx.
\end{split}
\end{equation*}
To conclude, we can assert that there exists a positive $C_B$ such that
$$
\int_{B_{2\rho}(x_0)} \frac{B(|\nabla u|)}{(\varepsilon + A(|\nabla u|))^{\alpha r}} \frac{\varphi_\rho^2}{|x-y|^{\gamma}} \ dx \leq \tau I_F+\frac{C_B}{4\tau}
$$
hence by Fatou Lemma
\begin{equation}\label{IBp>2}
\begin{split}
\limsup_{\delta \to 0} |I_B| =\int_{B_{2\rho}(x_0)} \frac{B(|\nabla u|)}{(\varepsilon + A(|\nabla u|))^{\alpha r}} \frac{\varphi_\rho^2}{|x-y|^{\gamma}} \  dx &\leq \ \tau I_F+\frac{C_B}{4\tau}.
\end{split}
\end{equation}

\noindent Similarly, recalling that $|\nabla \varphi_\rho|\le \frac{2}{\rho}$, we get
\begin{equation}
\label{eqI3}
\begin{split}
& 2\int_{B_{2\rho}(x_0)}  \frac{A(|\nabla u|)|\nabla u| |\nabla \varphi|}{(\varepsilon + A(|\nabla u|))^{\alpha r}} \ \frac{\varphi_{\rho}}{ |x-y|^{\gamma}} \,dx \leq \frac{4}{\rho}\int_{B_{2\rho}(x_0)} \frac{A(|\nabla u|)|\nabla u|}{(\varepsilon + A(|\nabla u|))^{\alpha r}} \ \frac{\varphi_{\rho}}{ |x-y|^{\gamma}} \,dx. \\
\end{split}
\end{equation}

Again we divide the last integral on the set $\{A>1\}$ and $\{A<1\}$ respectively and we get that
$$
\int_{\{A>1\}}\frac{A(|\nabla u|)|\nabla u|}{(\varepsilon + A(|\nabla u|))^{\alpha r}} \frac{\varphi_{\rho}}{ |x-y|^{\gamma}} \,dx\leq \sup_{B_{2 \rho}(x_0)} (|\nabla u| A(|\nabla u|)) \int_{B_{2\rho}(x_0)} \frac{\varphi_{\rho}}{ |x-y|^{\gamma}} \,dx
$$
while, following arguing as for $I_B$, being $\alpha r>r$ and $A(|\nabla u|))<A(|\nabla u|))^{r}$
$$
\int_{\{A<1\}}\frac{A(|\nabla u|)|\nabla u|}{(\varepsilon + A(|\nabla u|))^{\alpha r}}\frac{\varphi_{\rho}}{ |x-y|^{\gamma}} \,dx\leq \sup_{B_{2 \rho}(x_0)} (|\nabla u|^{1-r}) \int_{B_{2\rho}(x_0)} \frac{\varphi_{\rho}}{ |x-y|^{\gamma}} \,dx.
$$
Therefore we can state that there exists $C_3>0$ such that
$$
2\int_{B_{2\rho}(x_0)}  \frac{A(|\nabla u|)|\nabla u| |\nabla \varphi|}{(\varepsilon + A(|\nabla u|))^{\alpha r}} \ \frac{1}{ |x-y|^{\gamma}} \varphi_{\rho}\,dx\leq C_3
$$
and then
$$
\limsup_{\delta \to 0} |I_3|\leq C_3.
$$
\noindent Now we proceed further observing that, with the same procedure, we can show that there exists $C_2>0$ such that
$$
\limsup_{\delta \to 0} |I_2|\leq C_2.
$$
It remain to consider $I_1$: again Young inequality give us that
\begin{equation*}
\begin{split}
&\limsup_{\delta \to 0}|I_1| \le  \alpha r \int_{B_{2\rho}(x_0)} \frac{A(|\nabla u|) |A'(|\nabla u|)| |\nabla u|}{(\varepsilon + A(|\nabla u|))^{\alpha r+1}}  \frac{\norm{D^2u}}{|x-y|^\gamma}\ \varphi_\rho^2\,dx \\
&= \alpha r\int_{B_{2\rho}(x_0)} \frac{\varphi_\rho }{(\varepsilon + A(|\nabla u|))^\frac{\alpha r}{2}|x-y|^\frac{\gamma}{2}}  \ \frac{A(|\nabla u|) |A'(|\nabla u|)| |\nabla u|}{(\varepsilon + A(|\nabla u|))^{\frac{\alpha r}{2}+1}} \frac{\norm{D^2u}}{|x-y|^\frac{\gamma}{2}}\ \varphi_\rho \,dx \\
&\le \alpha \tau  I_F +\frac{\alpha r}{4 \tau} \int_{B_{2\rho}(x_0)} \frac{(A(|\nabla u|))^2 (A'(|\nabla u|))^2 |\nabla u|^2}{(\varepsilon + A(|\nabla u|))^{\alpha r+2}} \frac{\norm{D^2u}^2}{|x-y|^\gamma}\ \varphi_\rho^2 \,dx \\
&\leq \alpha r\tau  I_F + \frac{\alpha r M_A^2}{4 \tau}\int_{B_{2\rho}(x_0)}\frac{(A(|\nabla u|))^2 }{(\varepsilon + A(|\nabla u|))^{\alpha r}}  \frac{\norm{D^2u}^2}{|x-y|^\gamma}\ \varphi_\rho^2 \,dx \\
&\leq \alpha r \tau  I_F + \frac{\alpha r M_A^2}{4 \tau}\int_{B_{2\rho}(x_0)}\frac{(A(|\nabla u|))^2 }{ A(|\nabla u|))^{\alpha r}}  \frac{\norm{D^2u}^2}{|x-y|^\gamma}\ \varphi_\rho^2 \,dx. \\
\end{split}
\end{equation*}
\vskip 10pt
\noindent As in previous cases, in the set  $\{A<1\}$, we easily get
\begin{equation*}
\begin{split}
 &\int_{\{A<1\}}\frac{(A(|\nabla u|))^2 }{ A(|\nabla u|))^{\alpha r}}  \frac{\norm{D^2u}^2}{|x-y|^\gamma}\ \varphi_\rho^2 \,dx\leq \int_{\{A<1\}}\frac{A(|\nabla u|) }{ A(|\nabla u|))^{\frac{r}{p-2}}}  \frac{\norm{D^2u}^2}{|x-y|^\gamma}\ \varphi_\rho^2 \,dx\\
& \quad \leq \frac{1}{c_2^{\frac{r}{p-2}}} \int_{ \{A<1\}}\frac{A(|\nabla u|)}{ |\nabla u|^{r}}  \frac{\norm{D^2u}^2}{|x-y|^\gamma}\ \varphi_\rho^2 \,dx\\
\end{split}
\end{equation*}
so we can use Theorem \ref{Stima_D2_pt1} with $\beta=0$.
On the set $\{A>1\}$, 
\begin{equation*}
\begin{split}
 &\int_{\{A>1\}}\frac{(A(|\nabla u|))^2 }{ A(|\nabla u|))^{\alpha r}}  \frac{\norm{D^2u}^2}{|x-y|^\gamma}\ \varphi_\rho^2 \,dx\leq \int_{\{A>1\}}A(|\nabla u|)^{1-r}  A(|\nabla u|)\frac{\norm{D^2u}^2}{|x-y|^\gamma}\ \varphi_\rho^2 \,dx\\
& \quad \leq \sup_{B_{2\rho}}(A(|\nabla u| |\nabla u|^{\eta})^{1-r} \int_{\{A>1\}}\frac{A(|\nabla u|)}{ |\nabla u|^{\eta(1-r)}}  \frac{\norm{D^2u}^2}{|x-y|^\gamma}\ \varphi_\rho^2 \,dx\\
\end{split}
\end{equation*}
hence we can  use again Theorem \ref{Stima_D2_pt1} with $\beta=\eta(1-r)$.
Summarizing we obtain the existence of a constant $C_1>0$ such that
\begin{equation}\label{I1p>2}	
   \limsup_{\delta \to 0}|I_1| \le \alpha r \tau  I_F + \frac{\alpha r M_A^2}{4 \tau} C_1.
\end{equation} 

\vskip 10pt
\noindent Collecting all our estimates, from \eqref{estimate} and letting $\delta \to 0$  we have for every $r \in (0,1)$
\begin{equation*}
\begin{split}
&(c_h(\rho)- (\alpha r+1) \tau) \int_{B_{2\rho}(x_0)} \frac{1}{(\varepsilon + A(|\nabla u|))^{\alpha r} |x-y|^\gamma }  \varphi_{\rho}^2\,dx 
\le  \frac{\alpha r M_A^2 C_1}{4\tau}+C_2+C_3+\frac{C_B}{4\tau}.
\end{split}
\end{equation*}
For $\tau$ sufficient small such that $(c_h(\rho)- (\alpha r+1) \tau)>0$, letting $\varepsilon \to 0$, we get by Fatou Lemma that

\begin{equation}
\begin{split}
\int_{B_{2\rho}(x_0)}  \frac{ \varphi_{\rho}^2}{A(|\nabla u|)^{\alpha r} |x-y|^\gamma } \,dx \leq \lim\limits_{\varepsilon \rightarrow 0} \int_{B_{2\rho}(x_0)} \frac{\varphi_{\rho}^2}{(\varepsilon + A(|\nabla u|))^{\alpha r} |x-y|^\gamma }  \,dx  \leq \mathcal{C}
\end{split}
\end{equation}

\noindent where $\mathcal{C}=\mathcal{C}(\gamma, \beta, h, ||\nabla u||_\infty, \rho, x_0, \alpha, M_A,c_2,\tau, \hat{C})$.

\noindent Observing that  $|Z_u|=0$,
%
%
%
%
we obtain 
\begin{equation*}
 \int_{B_{\rho}(x_0)} \frac{1}{(A(|\nabla u|))^{\alpha r}}\frac{1}{ |x-y|^\gamma }\,dx  \le  \int_{B_{2\rho}(x_0)} \frac{1}{(A(|\nabla u|))^{\alpha r} |x-y|^\gamma }  \varphi_{\rho}^2\,dx \le \mathcal{C}.
\end{equation*}

\vskip 10pt
\noindent Then, we obtain that 

\begin{equation}
\int_{B_{\rho}(x_0)} \frac{1}{(A(|\nabla u|))^{\alpha r}}\frac{1}{ |x-y|^\gamma }\,dx \le \mathcal{C}.
\end{equation}

If $\Om$ is a smooth domain and $f$ is nonnegative in $\Om$, by H\"{o}pf Lemma we get that $Z_u \cap \partial \Om = \emptyset$, then in a neighbourhood of $\partial \Om$, $|\nabla u|$ is strictly positive. Arguing as in \cite[Theorem 4.2]{CanDeGS} we get \eqref{reg2.1}.

\end{proof}

Similarly to Theorem \ref{Local_sum_weight}, we now show summability properties for $A(|\nabla u|)^{-1}$ in the case $1<p< 2$.

\begin{rem}
	If $1<p < 2$, we have that $m_A<0$. In fact, by \eqref{D20}, one has that 
	$$ A(|\nabla u|) \geq \frac{c_2}{|\nabla u|^{2-p}}\xrightarrow{|\nabla u| \rightarrow 0} +\infty.$$
	Then if $|\nabla u| \rightarrow 0$, considering also \cite[Proposition 4.1]{Cia14}, we get
	$$\displaystyle  \frac{c_2}{|\nabla u|^{2-p}}\leq A(|\nabla u|) \leq A(1)|\nabla u|^{m_A}$$ and so $m_A$ has to be necessarily negative.
	Therefore, from \eqref{hpregolarita}, we get that $\alpha:= \frac{m_A+1}{m_A}<0$. 
\end{rem}
\begin{thm}
	\label{Local_sum_weight2}
	Let $u\in C^{1,\alpha}(\Omega\setminus \{0\})\cap C^{2}(\Omega\setminus (Z_u \cup \{0\}))$ be a solution to \eqref{ProbPuntuale} with $f\in W^{1,\infty}(\Omega)$ and
	$f(x)>0$ in $B_{2 \rho}(x_0) \subset \Omega$. Let us suppose that $p\in(1,2)$. Then 
	\begin{equation}
	\label{reg_2}
	\int_{B_{\rho}(x_0)} \frac{1}{(A(|\nabla u|))^{\alpha r}} \frac{1}{|x-y|^\gamma} \le \mathcal{C}
	\end{equation}
	for any $y\in B_{\rho}(x_0)$, with $\displaystyle \alpha:=\frac{m_A+1}{m_A}$, $r\in (0,1)$, $\gamma < N-2$ if $N \ge 3$, $\gamma=0$ if $N=2$ and $$\mathcal{C}=\mathcal{C}(\gamma, \beta, h, ||\nabla u||_\infty, \rho, x_0, \alpha, M_A,c_2,\tau, \hat{C}).$$
	
	\noindent If $\Om$ is a smooth domain and $f$ is nonnegative in $\Om$
	\begin{equation}\label{reg2.1p<2}
	\int_{\tOm} \frac{1}{(A(|\nabla u|))^{\alpha r}} \frac{1}{|x-y|^\gamma} \le \mathcal{C}
	\end{equation}
	for any $\tOm \Subset \Om \setminus \{0\}$, $y\in \tOm$ and $\displaystyle \alpha:=\frac{m_A+1}{m_A}$, $r\in (0,1)$.
	
\end{thm}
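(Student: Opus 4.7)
The plan is to follow the proof of Theorem~\ref{Local_sum_weight} very closely, with the only substantive modification being dictated by the fact that in the present regime $\alpha=(m_A+1)/m_A$ is negative: indeed $m_A\in(-1,0)$ because $p<2$, so $\alpha r<0$ for every $r\in(0,1)$. As in the case $p>2$, we shall use as a test function in the weak formulation of \eqref{ProbPuntuale} the expression
\[
\phi = \frac{H_\delta(|x-y|)\,\varphi_\rho^{2}}{(\varepsilon+A(|\nabla u|))^{\alpha r}},
\]
where $H_\delta$, $\varphi_\rho$ are defined as in Theorem~\ref{Stima_D2_pt1}. The new point is that, because $\alpha r<0$ and $A(|\nabla u|)\to+\infty$ as $|\nabla u|\to 0$ (recall \eqref{D20} with $p<2$), this $\phi$ may fail to belong to $L^\infty(\Omega)$. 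We therefore first work with the admissible approximation
\[
\phi_M = \frac{H_\delta(|x-y|)\,\varphi_\rho^{2}}{(\varepsilon+A(|\nabla u|)\wedge M)^{\alpha r}},\qquad M>0,
\]
which is bounded by $(\varepsilon+M)^{|\alpha r|}\cdot\|H_\delta\|_\infty$, and pass to the limit $M\to+\infty$ at the end via monotone convergence.

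Plugging $\phi_M$ into the weak formulation and expanding $\nabla\phi_M$ exactly as in the derivation of \eqref{term_th2}-\eqref{I_a_b_c_d}, and using that $h(x,u)\ge c_h(\rho):=\min_{B_{2\rho}(x_0)}h>0$ on $B_{2\rho}(x_0)$, we obtain
\[
c_h(\rho)\int_{B_{2\rho}(x_0)} \frac{H_\delta\,\varphi_\rho^{2}}{(\varepsilon+A\wedge M)^{\alpha r}}\,dx \ \le\ |I_1|+|I_2|+|I_3|+|I_B|,
\]
with integrals $I_1,I_2,I_3,I_B$ that are the exact analogues of those in \eqref{I_a_b_c_d}. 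The strategy is the usual one: control every $|I_j|$ by $\tau\, I_F+\mathcal{C}_j$, where
\[
I_F \ =\ \int_{B_{2\rho}(x_0)}\frac{\varphi_\rho^{2}}{A^{\alpha r}\,|x-y|^{\gamma}}\,dx,
\]
then choose $\tau>0$ so small that the overall coefficient of $I_F$ on the right is strictly less than $c_h(\rho)$ and absorb, and finally let $\delta\to 0$, $\varepsilon\to 0$, $M\to+\infty$ (in this order) using Fatou and monotone convergence. The terms $|I_2|$, $|I_3|$, $|I_B|$ are handled by Young's inequality combined with the usual dichotomy $\{A>1\}\cup\{A<1\}$: on $\{A<1\}$ one exploits $A<A^{r}$ (since $0<r<1$), whereas on $\{A>1\}$ one uses the bound $A(|\nabla u|)\le A(1)|\nabla u|^{M_A}$ from Remark~\ref{teAlim} together with $\|\nabla u\|_\infty<+\infty$.

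The delicate term is $I_1$. After Young's inequality and the bound $|A'(|\nabla u|)|\,|\nabla u|\le \max\{|m_A|,M_A\}\,A(|\nabla u|)$ coming from \eqref{hpregolarita}, we are reduced to controlling
\[
\int_{B_{2\rho}(x_0)}\frac{A^{2}(|\nabla u|)}{(\varepsilon+A\wedge M)^{\alpha r}}\,\frac{\|D^{2}u\|^{2}\,\varphi_\rho^{2}}{|x-y|^{\gamma}}\,dx.
\]
The specific choice $\alpha=(m_A+1)/m_A$ is what makes the exponent arithmetic close: on $\{A>1\}$ one picks $\eta\in[0,1)$ with $m_A+\eta>0$ (possible by Remark~\ref{teAlim}), and the above integrand is bounded, up to multiplicative constants depending on $M_A$ and $\|\nabla u\|_\infty$, by $A(|\nabla u|)\,\|D^{2}u\|^{2}/(|\nabla u|^{\eta(1-r)}|x-y|^{\gamma})$; on $\{A<1\}$, the inequality $A^{1-r}<1$ yields the analogous bound with $\beta=0$. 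In both cases one lands in the range $\beta\in[0,1)$, $\gamma<N-2$ covered by Theorem~\ref{Stima_D2_pt1} and Remark~\ref{IstimasuOm}, which gives a finite contribution. This is the direct analogue of the role played by $\alpha=(p-1)/(p-2)$ in the proof of Theorem~\ref{Local_sum_weight}, with the pointwise exponent $p-2$ replaced by its differential counterpart $m_A$.

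The main obstacle is, as already indicated, a purely algebraic one: verifying that the choice $\alpha=(m_A+1)/m_A$ combined with an auxiliary $\eta\in(-m_A,1)$ produces a $\beta<1$ in every term, so that Theorem~\ref{Stima_D2_pt1} is applicable; a secondary technical point is that the test function is unbounded and hence has to be truncated, which is handled by the monotone-convergence step in $M$. Once \eqref{reg_2} is established on $B_\rho(x_0)$, the promotion to the integral over $\tOm\Subset\Omega\setminus\{0\}$ in the case where $\Omega$ is smooth and $f\ge 0$ in $\Omega$ proceeds exactly as in the final step of the proof of Theorem~\ref{Local_sum_weight}: Hopf's boundary point lemma ensures $Z_u\cap\partial\Omega=\emptyset$, so $|\nabla u|$ stays bounded away from $0$ in a neighbourhood of $\partial\Omega$, and a finite covering of $\overline{\tOm}$ by balls on which the local estimate has already been established yields \eqref{reg2.1p<2}.
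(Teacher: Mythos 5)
Your overall framework (same test function with $\alpha=(m_A+1)/m_A$, Young's inequality, absorption via small $\tau$, decomposition near/away from $Z_u$, appeal to Theorem \ref{Stima_D2_pt1}) matches the paper's, and your observation that the test function is unbounded when $p<2$ --- since $\alpha r<0$ and $A(|\nabla u|)\to+\infty$ near $Z_u$ --- so that a truncation at level $M$ plus monotone convergence is needed for full rigor, is a legitimate technical refinement that the paper glosses over. However, the exponent arithmetic in your $I_1$ estimate, which is the step that actually makes the theorem true, is wrong, and it is wrong in a way that reveals a misreading of which region is the singular one.

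On your set $\{A>1\}$ (the paper's $\Zueps$; for $p<2$ this is the region \emph{near} $Z_u$, not away from it), after bounding $(\varepsilon+A)^{-\alpha r}\lesssim A^{-\alpha r}$ you must control $A^{2-\alpha r}\|D^{2}u\|^{2}/|x-y|^{\gamma}$ by $A\|D^{2}u\|^{2}/(|\nabla u|^{\beta}|x-y|^{\gamma})$ with $\beta\in[0,1)$ so that Theorem \ref{Stima_D2_pt1} applies. Using $A(t)\le A(1)\,t^{m_A}$ for $t$ small (the operative exponent is $m_A$, \emph{not} $M_A$ as you write: for $p<2$ one has $|\nabla u|<1$ on $\{A>1\}$ and then $\max\{t^{m_A},t^{M_A}\}=t^{m_A}$), the exponent that emerges is
\begin{equation*}
\beta \ = \ m_A(\alpha r-1) \ = \ (m_A+1)r-m_A,
\end{equation*}
and the whole point of the choice $\alpha=(m_A+1)/m_A$ is that this $\beta$ is always positive and satisfies $\beta<1$ if and only if $r<1$. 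You instead assert $\beta=\eta(1-r)$ with $\eta\in(-m_A,1)$; that is the exponent from the $\{A>1\}$ step of the $p>2$ proof, where $\{A>1\}$ is the \emph{easy} region away from $Z_u$. Transplanted to $p<2$ it does not close: $(m_A+1)r-m_A\to 1$ as $r\uparrow 1$ while $\eta(1-r)\to 0$, so the inequality you would need, $\eta(1-r)\ge (m_A+1)r-m_A$, fails for all admissible $\eta<1$ once $r>\frac{1+m_A}{2+m_A}$. The same confusion between the small- and large-argument regimes of $A$ (hence between $m_A$ and $M_A$) also infects your $I_B$, $I_2$, $I_3$ estimates on $\{A>1\}$, though there the slack is larger.
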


\begin{proof}
	\noindent Choose the same test function as in Theorem \ref{Local_sum_weight} (taking into account the new value of $\alpha$) and let us start from the inequality
%
%
	\begin{equation}
	\label{estimate2}
	c_h(\rho)\int_{B_{2\rho}(x_0)}  \frac{H_\delta \  \varphi_{\rho}^2 }{(\varepsilon + A(|\nabla u|))^{\alpha r}} \,dx \le |I_1|+ |I_2|+ |I_3|+ |I_B|\,.
	\end{equation}
	by using the same notations as in proof of Theorem \ref{Local_sum_weight}.\ \\
	\noindent Let $\bar{\varepsilon}>0$. To our goal, we will proceed in two different ways depending on whether we work on the set $Z_{u}^{\bar{\varepsilon}}:=\{x\in B_{2\rho}(x_0): d(x,Z_u)<\bar{\varepsilon}\}$ or $B_{2\rho}(x_0) \setminus \Zueps$.\\ If $x\in \Zueps$, by \eqref{D20}, there exists $K\geq 1$ such that $A(|\nabla u(x)|) > K$. Then, we will use later the following estimate
	\begin{equation}\label{Zue}
	(\varepsilon + A(|\nabla u|))^{-\alpha r} < (K+A(|\nabla u|))^{-\alpha r} < (K+A(|\nabla u|))^{-\alpha} <  2^{-\alpha}A(|\nabla u|)^{-\alpha};
	\end{equation}
	
	\noindent while, if $x\in B_{2\rho}(x_0) \setminus \Zueps$ we have that $\sup\limits_{B_{2\rho}(x_0) \setminus \Zueps} A(|\nabla u(x)|) < +\infty.$ Then, there exists $C\in \mathbb{R}^+$ such that
	\begin{equation}\label{Zue^C}
	(\varepsilon+A(|\nabla u|))^{-\alpha r}< 2^{-(\alpha r +1)}(\varepsilon^{-\alpha r} + A(|\nabla u|)^{-\alpha r})< C.
	\end{equation}
	By Young inequality $ab \le \tau a^2+ \frac{b^2}{4 \tau}$, we observe that
	\begin{equation}
	\label{IB2}
	\begin{split}
	&\int_{B_{2\rho}(x_0)} \frac{B(|\nabla u|)}{(\varepsilon + A(|\nabla u|))^{\alpha r}} \frac{\varphi_\rho^2}{|x-y|^{\gamma}} \ \,dx \\\
	&\le \int_{B_{2\rho}(x_0)} \frac{\varphi_{\rho}}{(\varepsilon + A(|\nabla u|))^\frac{{\alpha r}}{2}|x-y|^\frac{\gamma}{2}}   \ \frac{\varphi_{\rho}B(|\nabla u|)}{(\varepsilon + A(|\nabla u|))^\frac{{\alpha r}}{2}|x-y|^\frac{\gamma}{2}} \,dx \\
	&\le \tau I_F+ \frac{1}{4 \tau} \int_{B_{2\rho}(x_0)} \frac{B^2(|\nabla u|)}{(\varepsilon+A(|\nabla u|))^{\alpha r}} \ \frac{\varphi_{\rho}^2}{ |x-y|^{\gamma}} \,dx\\
	& \le \tau I_F+ \frac{\hat{C}^2}{4 \tau} \int_{B_{2\rho}(x_0)} \frac{A^2(|\nabla u|)|\nabla u|^4}{(\varepsilon+A(|\nabla u|))^{\alpha r}} \ \frac{\varphi_{\rho}^2}{ |x-y|^{\gamma}}\,dx\\
	& \le \tau I_F+ \frac{\hat{C}^2}{4 \tau} \int_{B_{2\rho}(x_0) \setminus \Zueps} \frac{A^2(|\nabla u|)|\nabla u|^4}{(\varepsilon+A(|\nabla u|))^{\alpha r}} \ \frac{\varphi_{\rho}^2}{ |x-y|^{\gamma}} dx\\ 
	& \quad +\frac{\hat{C}^2}{4 \tau} \int_{\Zueps} \frac{A^2(|\nabla u|)|\nabla u|^4}{(\varepsilon+A(|\nabla u|))^{\alpha r}} \ \frac{\varphi_{\rho}^2}{ |x-y|^{\gamma}} \,dx\\
	& \le \tau I_F+ \frac{\hat{C}^2}{4 \tau}C \sup_{B_{2 \rho}(x_0) \setminus \Zueps} (|\nabla u|^4 A^{2}(|\nabla u|))\int_{B_{2 \rho}(x_0) \setminus \Zueps}  \frac{\varphi_{\rho}^2}{ |x-y|^{\gamma}} \,dx\\
	& \quad +\frac{\hat{C}^2}{4 \tau 2^{\alpha}} \int_{\Zueps} A^2(|\nabla u|)|\nabla u|^4 A(|\nabla u|)^{-\alpha} \frac{\varphi_{\rho}^2}{ |x-y|^{\gamma}} \,dx.
	\end{split}
	\end{equation}
	
	\noindent By \cite[Proposition 4.1]{Cia14} we have that $A(|\nabla u|) \leq A(1)|\nabla u|^{m_A}$ on $\Zueps$ then
	\begin{equation}
	\begin{split}
	&\int_{B_{2\rho}(x_0)} \frac{B(|\nabla u|)}{(\varepsilon + A(|\nabla u|))^{\alpha r}} \frac{\varphi_\rho^2}{|x-y|^{\gamma}} \  \,dx \\
	&\le \tau I_F+ \frac{\hat{C}^2}{4 \tau}C \sup_{B_{2 \rho}(x_0) \setminus \Zueps} (|\nabla u|^4 A^{2}(|\nabla u|))\int_{B_{2 \rho}(x_0) \setminus \Zueps}  \frac{\varphi_{\rho}^2}{ |x-y|^{\gamma}} \,dx\\
	& \quad +\frac{\hat{C}^2A(1)^{2-\alpha}}{4 \tau 2^{\alpha}} \int_{\Zueps} \frac{ |\nabla u|^{m_A+3}\varphi_{\rho}^2}{ |x-y|^{\gamma}} \,dx\\
	&\le \tau I_F+ \frac{\hat{C}^2}{4 \tau}C \sup_{B_{2 \rho}(x_0) \setminus \Zueps} (|\nabla u|^4 A^{2}(|\nabla u|))\int_{B_{2 \rho}(x_0) \setminus \Zueps}  \frac{\varphi_{\rho}^2}{ |x-y|^{\gamma}} \,dx\\
	& \quad +\frac{\hat{C}^2A(1)^{2-\alpha}}{4 \tau 2^{\alpha}}\sup_{\Zueps}|\nabla u|^{m_A+3} \int_{\Zueps} \frac{\varphi_{\rho}^2}{ |x-y|^{\gamma}} \,dx.\\
	\end{split}
	\end{equation}
	
	\noindent To conclude, we can assert that there exists a positive $C_B$ such that
	$$
	\int_{B_{2\rho}(x_0)} \frac{B(|\nabla u|)}{(\varepsilon + A(|\nabla u|))^{\alpha r}} \frac{1}{|x-y|^{\gamma}} \  \varphi_\rho^2\,dx \leq \tau I_F+\frac{C_B}{4\tau}
	$$
	hence by Fatou Lemma
	\begin{equation}\label{IB2p<2}
	\begin{split}
	\limsup_{\delta \to 0} |I_B| &\leq \ \tau I_F+\frac{C_B}{4\tau}.
	\end{split}
	\end{equation}
	
	\noindent Similarly, recalling that $|\nabla \varphi_\rho|\le \frac{2}{\rho}$, we get
	\begin{equation}
	\label{eqI32}
	\begin{split}
	& 2\int_{B_{2\rho}(x_0)}  \frac{A(|\nabla u|)|\nabla u| |\nabla \varphi|}{(\varepsilon + A(|\nabla u|))^{\alpha r}} \ \frac{\varphi_{\rho}}{ |x-y|^{\gamma}} \,dx \leq \frac{4}{\rho}\int_{B_{2\rho}(x_0)} \frac{A(|\nabla u|)|\nabla u|}{(\varepsilon + A(|\nabla u|))^{\alpha r}} \ \frac{\varphi_{\rho}}{ |x-y|^{\gamma}} \,dx. \\
	\end{split}
	\end{equation}
	
	\noindent Again we divide the last integral on the set $\Zueps$ and $B_{2\rho}(x_0) \setminus \Zueps$ respectively and we get, using \eqref{Zue}, that

	\begin{equation*}
	\begin{split}
	&\int_{\Zueps}\frac{A(|\nabla u|)|\nabla u|}{(\varepsilon + A(|\nabla u|))^{\alpha r}} \frac{\varphi_{\rho}}{ |x-y|^{\gamma}} \,dx\leq 2^{-\alpha} \int_{\Zueps} \frac{A(|\nabla u|)^{1-\alpha}|\nabla u|\varphi_{\rho}}{ |x-y|^{\gamma}} \,dx\\
	&\le 2^{-\alpha}A(1)^{1-\alpha} \int_{\Zueps} \frac{|\nabla u|^{m_A(1-\alpha)}|\nabla u|\varphi_{\rho}}{ |x-y|^{\gamma}} \,dx = 2^{-\alpha}A(1)^{1-\alpha} \int_{\Zueps} \frac{\varphi_{\rho}}{ |x-y|^{\gamma}} \,dx;
	\end{split}
    \end{equation*}

	\noindent while, using \eqref{Zue^C} we get 
	\begin{equation*}
	\begin{split}
	&\int_{B_{2\rho}(x_0)\setminus \Zueps }\frac{A(|\nabla u|)|\nabla u|}{(\varepsilon + A(|\nabla u|))^{\alpha r}}\frac{\varphi_{\rho}}{ |x-y|^{\gamma}} \,dx \\ 
	& \leq C \sup_{B_{2 \rho}(x_0) \setminus \Zueps} (A(|\nabla u|)|\nabla u|) \int_{B_{2\rho}(x_0)\setminus \Zueps} \frac{\varphi_{\rho}}{ |x-y|^{\gamma}} \,dx.
	\end{split}
	\end{equation*}

	Therefore we can state that there exists $C_3>0$ such that
	$$
	2\int_{B_{2\rho}(x_0)}  \frac{A(|\nabla u|)|\nabla u| |\nabla \varphi|}{(\varepsilon + A(|\nabla u|))^{\alpha r}} \ \frac{1}{ |x-y|^{\gamma}} \varphi_{\rho}\,dx\leq C_3
	$$
	and then
	$$
	\limsup_{\delta \to 0} |I_3|\leq C_3.
	$$
	\noindent Now we proceed further observing that, with the same procedure, we can show that there exists $C_2>0$ such that
	$$
	\limsup_{\delta \to 0} |I_2|\leq C_2.
	$$
	It remain to consider $I_1$: again Young inequality give us that
	\begin{equation*}
	\begin{split}
	&\limsup_{\delta \to 0}|I_1| \le  \alpha r \int_{B_{2\rho}(x_0)} \frac{A(|\nabla u|) |A'(|\nabla u|)| |\nabla u|}{(\varepsilon + A(|\nabla u|))^{\alpha r+1}}  \frac{\norm{D^2u}}{|x-y|^\gamma}\ \varphi_\rho^2\,dx \\
	&= \alpha r\int_{B_{2\rho}(x_0)} \frac{1}{(\varepsilon + A(|\nabla u|))^\frac{\alpha r}{2}|x-y|^\frac{\gamma}{2}}  \varphi_\rho \ \frac{A(|\nabla u|) |A'(|\nabla u|)| |\nabla u|}{(\varepsilon + A(|\nabla u|))^{\frac{\alpha r}{2}+1}} \frac{\norm{D^2u}}{|x-y|^\frac{\gamma}{2}}\ \varphi_\rho \,dx \\
	&\le \alpha \tau  I_F +\frac{\alpha r}{4 \tau} \int_{B_{2\rho}(x_0)} \frac{(A(|\nabla u|))^2 (A'(|\nabla u|))^2 |\nabla u|^2}{(\varepsilon + A(|\nabla u|))^{\alpha r+2}} \frac{\norm{D^2u}^2}{|x-y|^\gamma}\ \varphi_\rho^2 \,dx \\
	&\leq \alpha r\tau  I_F + \frac{\alpha r M_A^2}{4 \tau}\int_{B_{2\rho}(x_0)}\frac{(A(|\nabla u|))^2 }{(\varepsilon + A(|\nabla u|))^{\alpha r}}  \frac{\norm{D^2u}^2}{|x-y|^\gamma}\ \varphi_\rho^2 \,dx 
	\end{split}
	\end{equation*}
	\vskip 10pt
	\noindent As in previous cases, using \eqref{Zue^C} and choosing $\eta$ as in Remark \ref{teAlim}, we easily get
	\begin{equation*}
	\begin{split}
	&\int_{B_{2\rho}(x_0)\setminus \Zueps}\frac{(A(|\nabla u|))^2 }{( \varepsilon+A(|\nabla u|))^{\alpha r}}  \frac{\norm{D^2u}^2}{|x-y|^\gamma}\ \varphi_\rho^2 \,dx\\
	& \leq C \int_{B_{2\rho}(x_0)\setminus \Zueps}\frac{A(|\nabla u|)\norm{D^2u}^2 }{ |\nabla u|^{\eta}|x-y|^{\gamma}} A(|\nabla u|)|\nabla u|^{\eta}\ \varphi_\rho^2 \,dx
	\end{split}
	\end{equation*}
	so we can use Theorem \ref{Stima_D2_pt1} and Remark \ref{teAlim} to conclude that this term is finite.
	On the set $\Zueps$, by \eqref{Zue}, one has 
	\begin{equation*}
	\begin{split}
	&\int_{\Zueps}\frac{(A(|\nabla u|))^2 }{ (\varepsilon + A(|\nabla u|))^{\alpha r}}  \frac{\norm{D^2u}^2}{|x-y|^\gamma}\ \varphi_\rho^2 \,dx\leq 2^{-\alpha r}\int_{\Zueps}\frac{A(|\nabla u|)\norm{D^2u}^2 }{ |x-y|^{\gamma}} A(|\nabla u|)^{1-\alpha r}\ \varphi_\rho^2 \,dx\\
	&\leq 2^{-\alpha r}A(1)^{1-\alpha r} \int_{\Zueps}\frac{A(|\nabla u|) \norm{D^2u}^2}{|x-y|^\gamma} |\nabla u|^{m_A(1-\alpha r)}\ \varphi_\rho^2 \,dx\\
	& =  2^{-\alpha r}A(1)^{1-\alpha r} \int_{\Zueps}\frac{A(|\nabla u|) \norm{D^2u}^2}{|x-y|^\gamma |\nabla u|^{m_A(\alpha r-1)}} \ \varphi_\rho^2 \,dx
	\end{split}
	\end{equation*}
	hence we can  use again Theorem \ref{Stima_D2_pt1} with $\beta=m_A(\alpha r -1)$.
	Summarizing we obtain the existence of a constant $C_1>0$ such that
	\begin{equation}\label{I12p<2}	
	\limsup_{\delta \to 0}|I_1| \le \alpha r \tau  I_F + \frac{\alpha r M_A^2}{4 \tau} C_1.
	\end{equation} 
	
	\vskip 10pt
	\noindent Collecting all our estimates, from \eqref{estimate2} and letting $\delta \to 0$  we have for every $r \in (0,1)$
	\begin{equation*}
	\begin{split}
	&(c_h(\rho)- (\alpha r+1) \tau) \int_{B_{2\rho}(x_0)} \frac{1}{(\varepsilon + A(|\nabla u|))^{\alpha r} |x-y|^\gamma }  \varphi_{\rho}^2\,dx 
	\le  \tilde{C}_1+C_2+C_3+\tilde{C}_B.
	\end{split}
	\end{equation*}
	For $\tau$ sufficient small such that $(c_h(\rho)- (\alpha r+1) \tau)>0$, letting $\varepsilon \to 0$, we get by Fatou Lemma that

	\begin{equation}
	\begin{split}
	&\int_{B_{2\rho}(x_0)} \frac{\varphi_{\rho}^2}{A(|\nabla u|)^{\alpha r} |x-y|^\gamma }  \,dx \leq \lim\limits_{\varepsilon \rightarrow 0} \int_{B_{2\rho}(x_0)} \frac{\varphi_{\rho}^2\,dx}{(\varepsilon + A(|\nabla u|))^{\alpha r} |x-y|^\gamma }    \leq \mathcal{C}
	\end{split}
	\end{equation}
	
	\noindent where $\mathcal{C}=\mathcal{C}(\gamma, \beta, h, ||\nabla u||_\infty, \rho, x_0, \alpha, M_A,c_2,\tau, \hat{C})$.
	
%
%
%
	\noindent Therefore, we obtain 
	\begin{equation*}
	\int_{B_{\rho}(x_0)} \frac{1}{(A(|\nabla u|))^{\alpha r}}\frac{1}{ |x-y|^\gamma }\,dx  \le  \int_{B_{2\rho}(x_0)} \frac{1}{(A(|\nabla u|))^{\alpha r} |x-y|^\gamma }  \varphi_{\rho}^2\,dx \le \mathcal{C}.
	\end{equation*}
	i.e. the claimed.
%
	
	If $\Om$ is a smooth domain and $f$ is nonnegative in $\Om$, by H\"{o}pf Lemma we get that $Z_u \cap \partial \Om = \emptyset$, then in a neighbourhood of $\partial \Om$, $|\nabla u|$ is strictly positive. Arguing as in \cite[Theorem 4.2]{CanDeGS} we get \eqref{reg2.1p<2}.
\end{proof}

Theorem \ref{Local_sum_weight} and \ref{Local_sum_weight2} state that in case of $A(t)=t^{p-2}$ for $p>1$, we exactly find \cite[Theorem 2.3]{DamSciu}; the same choice made in \cite[Theorem 4.2]{CanDeGS} for $p>2$ give us a closer interval for the power $\alpha$  and a different interval for $p\in (1,2)$.\\

\begin{prop}
	Let $\Om$ be a smooth domain, $\tOm \Subset \Om \setminus \{0\}$, $u \in C^1(\Om \setminus \{0\})$ be a weak solution to \eqref{ProbPuntuale} and suppose that $f$ is nonnegative and $f\in W^{1,\infty}(\bar{\Om})$. Then $|\displaystyle u_{ij}|\in L^2(\tOm)$ if $1<p<3$. If $p\geq 3$, then $|\displaystyle u_{ij}| \in L^q(\tOm)$ with  $\displaystyle q< \frac{p-1}{p-2}$.
\end{prop}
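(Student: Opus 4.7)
The plan is to combine the refined second-derivative estimate of Remark \ref{IstimasuOm} with the integrability of $A(|\nabla u|)^{-1}$ provided by Theorem \ref{Local_sum_weight} (applicable in the non-trivial subcase $p>2$). Taking $\gamma=0$ in Remark \ref{IstimasuOm} one has, for every $\beta\in[0,1)$,
\[
\int_{\tOm}|\nabla u|^{p-2-\beta}|u_{ij}|^{2}\,dx\le\frac{\mathcal{C}}{c_{2}},
\]
while Theorem \ref{Local_sum_weight}, combined with \eqref{D20} (which implies $A(|\nabla u|)^{-\alpha r}\ge C|\nabla u|^{-(p-1)r}$ when $p>2$), yields
\[
\int_{\tOm}|\nabla u|^{-(p-1)r}\,dx<\infty\qquad\text{for every }r\in(0,1).
\]
I would treat the two ranges of $p$ separately.

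For $1<p<3$ I aim to conclude $u_{ij}\in L^{2}(\tOm)$ using only the first bound, by a judicious choice of $\beta$. If $1<p\le 2$ pick $\beta=0$: since $p-2\le 0$ and $|\nabla u|\in L^{\infty}(\tOm)$, the weight satisfies $|\nabla u|^{p-2}\ge\|\nabla u\|_{\infty}^{p-2}>0$, so dividing through gives $\int_{\tOm}|u_{ij}|^{2}\,dx\le\|\nabla u\|_{\infty}^{2-p}\mathcal{C}/c_{2}$. If $2<p<3$ the admissible choice $\beta:=p-2\in(0,1)$ reduces the weight $|\nabla u|^{p-2-\beta}$ to $1$, so Remark \ref{IstimasuOm} directly yields $\int_{\tOm}|u_{ij}|^{2}\,dx\le\mathcal{C}/c_{2}$.

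For $p\ge 3$ I fix $q<(p-1)/(p-2)$ (note that $(p-1)/(p-2)\le 2$ when $p\ge 3$, so $q<2$) and write
\[
|u_{ij}|^{q}=\bigl(|\nabla u|^{p-2-\beta}|u_{ij}|^{2}\bigr)^{q/2}\cdot|\nabla u|^{-(p-2-\beta)q/2}.
\]
H\"older's inequality with conjugate exponents $2/q$ and $2/(2-q)$ bounds the first factor by Remark \ref{IstimasuOm} and reduces the remaining task to checking $\int_{\tOm}|\nabla u|^{-(p-2-\beta)q/(2-q)}\,dx<\infty$. By the second displayed estimate this holds whenever $(p-2-\beta)q/(2-q)<p-1$, i.e., $q<2(p-1)/(2p-3-\beta)$. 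As $\beta\uparrow 1^{-}$ the right-hand side tends to $(p-1)/(p-2)$, so for every prescribed $q<(p-1)/(p-2)$ a suitable $\beta\in[0,1)$ can be selected.

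The main obstacle I foresee is the sharpness of the exponent in the range $p\ge 3$: the naive H\"older application with $\beta=0$ (i.e. using the weight $A(|\nabla u|)\sim|\nabla u|^{p-2}$ coming from the plain $\beta=0$ version of Remark \ref{IstimasuOm}) would only deliver $q<2(p-1)/(2p-3)$, which is strictly smaller than the claimed $(p-1)/(p-2)$. The gain from allowing $\beta\in(0,1)$ in Theorem \ref{Stima_D2_pt1}, which refines the weight from $|\nabla u|^{p-2}$ to $|\nabla u|^{p-2-\beta}$ with $\beta$ arbitrarily close to $1$, is precisely what unlocks the optimal threshold $(p-1)/(p-2)$.
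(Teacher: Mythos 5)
Your treatment of the range $1<p<3$ is fine and essentially the paper's argument: the second display of Remark \ref{IstimasuOm} with $\gamma=0$ and a choice of $\beta\in[0,1)$ making $p-2-\beta\le 0$ yields $\int_{\tOm}|u_{ij}|^{2}\,dx<\infty$, because the weight $|\nabla u|^{p-2-\beta}$ is then bounded below by $\|\nabla u\|_{\infty}^{\,p-2-\beta}$.

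For $p\ge 3$ there is a genuine gap. You assert that \eqref{D20} implies $A(|\nabla u|)^{-\alpha r}\ge C|\nabla u|^{-(p-1)r}$ and hence that Theorem \ref{Local_sum_weight} yields $\int_{\tOm}|\nabla u|^{-(p-1)r}\,dx<\infty$. The inequality is reversed: \eqref{D20} reads $A(|\nabla u|)\ge c_{2}|\nabla u|^{p-2}$, so raising to the negative power $-\alpha r$ gives
\begin{equation*}
A(|\nabla u|)^{-\alpha r}\le c_{2}^{-\alpha r}\,|\nabla u|^{-(p-2)\alpha r}=c_{2}^{-\alpha r}\,|\nabla u|^{-(p-1)r},
\end{equation*}
so $A^{-\alpha r}$ is \emph{dominated} by that power of $|\nabla u|$, and finiteness of $\int_{\tOm}A(|\nabla u|)^{-\alpha r}\,dx$ does not control $\int_{\tOm}|\nabla u|^{-(p-1)r}\,dx$. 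To reach a pure power of $|\nabla u|$ one has to pass through the upper bound $A(t)\le A(1)t^{m_{A}}$ of Remark \ref{teAlim} (for $t<1$), and this only gives $\int_{\tOm}|\nabla u|^{-s}\,dx<\infty$ for $s<m_{A}\frac{p-1}{p-2}$; since compatibility of \eqref{D20} with Remark \ref{teAlim} forces $m_{A}\le p-2$ when $p>2$, that threshold is in general strictly below $p-1$, with equality only for the pure $p$-Laplacian. Thus your chain of inequalities would produce a strictly smaller admissible range of $q$ than the claimed $q<\frac{p-1}{p-2}$ for general $A$.

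The paper avoids this by never converting to a pure power of $|\nabla u|$ in the second H\"older factor. It writes $|u_{ij}|^{q}=\frac{|u_{ij}|^{q}A(|\nabla u|)^{q/2}}{|\nabla u|^{\eta q/2}}\cdot\frac{|\nabla u|^{\eta q/2}}{A(|\nabla u|)^{q/2}}$, bounds the first factor via the \emph{first} display of Remark \ref{IstimasuOm} (keeping $A(|\nabla u|)$; replacing $|u_{i}|^{\eta}$ by $|\nabla u|^{\eta}$ is harmless since $|u_{i}|\le|\nabla u|$), and in the second factor uses $|\nabla u|\le c_{2}^{-1/(p-2)}A(|\nabla u|)^{1/(p-2)}$ — i.e.\ \eqref{D20} in the direction that actually holds — to absorb the numerator power of $|\nabla u|$ into the denominator power of $A$, arriving at $\int_{\tOm}A(|\nabla u|)^{-\frac{q}{2-q}(1-\frac{\eta}{p-2})}\,dx$. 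This is exactly what Theorem \ref{Local_sum_weight} controls when $\eta$ is pushed close to $1$. Your free parameter $\beta$ plays the same role as the paper's $\eta$, but you need to keep the estimate in terms of $A(|\nabla u|)$ throughout rather than converting to $|\nabla u|$ powers.
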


\begin{proof}
	By Remark \ref{IstimasuOm}, for every $\beta<1$ we have
	$$|\nabla u|^{\frac{p-2-\beta}{2}}u_{ij} \in L^2(\tOm). $$
	
	\noindent Then, if $1<p<3$ we can choose $\beta$ such that $p-2-\beta<0$ obtaining that $\displaystyle u_{ij}\in L^2(\tOm)$.
	
	\noindent Instead, for the case $p\geq 3$, 

by Holder inequality with conjugate exponents $\frac{2}{q}$ and $\frac{2}{2-q}$, for $\eta \in [0, 1)$ we can apply Remark \ref{IstimasuOm}, obtaining
	
	\begin{equation*}
	\begin{split}
	\int_{\tOm} |u_{ij}|^q\,dx &= \int_{\tOm} \frac{|u_{ij}|^q A(|\nabla u|)^{\frac{q}{2}}}{|\nabla u|^{\frac{\eta q}{2}}}\frac{|\nabla u|^{\frac{\eta q}{2}}}{A(|\nabla u|)^{\frac{q}{2}}} \,dx \\
	&\leq \left(\int_{\tOm} \frac{|u_{ij}|^2 A(|\nabla u|)}{|\nabla u|^{\eta}}\,dx \right)^{\frac{q}{2}} \left(\bigintssss_{\tOm} \frac{|\nabla u|^{\frac{\eta q}{2-q}}}{A(|\nabla u|)^{\frac{q}{2-q}}} \,dx\right)^{\frac{2-q}{2}} \\
	& \leq c\mathcal{C}^{\frac{q}{2}} \left(\bigintssss_{\tOm} \frac{A(|\nabla u|)^{\frac{\eta q}{(2-q)(p-2)}}}{A(|\nabla u|)^{\frac{q}{2-q}}} \,dx\right)^{\frac{2-q}{2}} = c\mathcal{C}^{\frac{q}{2}} \left(\bigintssss_{\tOm} \frac{1}{A(|\nabla u|)^{\frac{q}{2-q}\left(1-\frac{\eta}{p-2}\right)}} \,dx\right)^{\frac{2-q}{2}}.
	\end{split}
	\end{equation*}

\noindent For the last integral, we show that if $q< \frac{p-1}{p-2}$, we can choose $\eta \in [0,1)$ such that 
\begin{equation}\label{exp}
\frac{q}{2-q}\left(1-\frac{\eta}{p-2}\right) < \frac{p-1}{p-2};
\end{equation}
hence, exploiting Theorem \ref{Local_sum_weight}, we get
\begin{equation*}
\left(\bigintssss_{\tOm} \frac{1}{A(|\nabla u|)^{\frac{q}{2-q}\left(1-\frac{\eta}{p-2}\right)}} \,dx\right)^{\frac{2-q}{2}} <+\infty.
\end{equation*}

\noindent In fact, let be consider $\varepsilon >0$ and $q= \frac{p-1}{p-2+\varepsilon} < \frac{p-1}{p-2}$ in \eqref{exp}. Observing that $q<\frac{p-1}{p-2} <2$, the left-hand side of \eqref{exp} becomes

\begin{equation*}
\frac{q}{2-q}\left(1-\frac{\eta}{p-2}\right) = \frac{p-1}{p-2} \left(\frac{p-2-\eta}{p-3+2\varepsilon}\right).
\end{equation*}

\noindent Choosing $\eta> 1-2\varepsilon$, we get that 
$$\left(\frac{p-2-\eta}{p-3+2\varepsilon}\right) <1; $$

\noindent hence, for this choice of $\eta$, \eqref{exp} is verified.
Therefore, $|\displaystyle u_{ij}| \in L^q(\tOm)$ with  $\displaystyle q< \frac{p-1}{p-2}$.

\end{proof}
 
\noindent Arguing as in \cite[Proposition 2.2]{DamSciu}, the following can be proved: 
\begin{thm}\label{PropforStamp}
	Let $\Om$ be a smooth domain, $u\in C^1(\bar{\Om} \setminus \{0\})$ be a weak solution to \eqref{ProbPuntuale}, $f \in W^{1,\infty}(\bar{\Om})$ and $f$ nonnegative function. Then if $1<p<3$, $u_{i} \in W^{1,2}_{loc}(\Om \setminus \{0\})$, while if $p\geq 3$ then $u_{i} \in W^{1,q}_{loc}(\Om \setminus \{0\})$, for every $i=1,...,N$ and for every $\displaystyle q< \frac{p-1}{p-2}$. Moreover the generalized derivatives of $u_{i}$ coincide with the classical ones, both denoted with $u_{ij}$ almost everywhere in $\Om$.
\end{thm}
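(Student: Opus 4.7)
The conclusion of the statement is that the classical second derivatives $u_{ij}$ (well-defined on $\tOm\setminus Z_u$ for any $\tOm\Subset\Om\setminus\{0\}$ and extended by zero on $Z_u$) are actually the distributional derivatives of $u_i$. Their $L^q$-summability has already been furnished by the previous Proposition, and $u\in C^1(\bar{\Om}\setminus\{0\})$ gives $u_i\in L^\infty(\bar{\tOm})\subset L^q(\tOm)$, so the only missing content is the integration-by-parts identity
\begin{equation*}
\int_{\tOm} u_i\,\partial_j\psi\,dx=-\int_{\tOm} u_{ij}\,\psi\,dx\qquad\forall\,\psi\in C_c^\infty(\tOm),
\end{equation*}
with $u_{ij}$ in the right-hand side understood as the classical derivative extended by $0$ on $Z_u$.

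My plan is to adapt the strategy used in Corollary \ref{CorforLou}, which already handled the companion object $A(|\nabla u|)u_i$. For each $n\in\mathbb{N}$ I would introduce the truncation
\begin{equation*}
\phi_n:=G_{1/n}(u_i^+)-G_{1/n}\bigl((-u_i)^+\bigr),
\end{equation*}
where $G_\varepsilon$ is the piecewise-linear cutoff from the proof of Theorem \ref{Stima_D2_pt1}. Since $u_i$ is continuous and $u_i=0$ on $Z_u$, the open set $U_n:=\{|u_i|<1/n\}$ is a neighbourhood of $Z_u$ on which $\phi_n\equiv 0$; on $\tOm\setminus\bar{U_n}$, the function $u$ is of class $C^2$, so $\phi_n$ is classically differentiable there with $|\partial_j\phi_n|\leq 2|u_{ij}|$. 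Together with the previous Proposition, this yields a uniform bound $\|\phi_n\|_{W^{1,q}(\tOm)}\leq K$, with $q$ as in the statement.

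By reflexivity of $W^{1,q}(\tOm)$ and the compact Sobolev embedding, a subsequence of $\phi_n$ converges weakly in $W^{1,q}$, and strongly in $L^q$ (hence a.e.) to some $h\in W^{1,q}(\tOm)$; the pointwise identity $\phi_n\to u_i$ forces $h=u_i$, which gives $u_i\in W^{1,q}_{loc}(\Om\setminus\{0\})$. To identify the weak derivative with $u_{ij}$, I would test against $\psi\in C_c^\infty(\tOm)$: classical integration by parts for $\phi_n$ (legitimate because $\phi_n$ vanishes on the open neighbourhood $U_n$ of $Z_u$, so the calculation takes place on a set where $u$ is $C^2$) gives
\begin{equation*}
\int_{\tOm}\phi_n\,\partial_j\psi\,dx=-\int_{\tOm\setminus U_n}\bigl[G'_{1/n}(u_i^+)\chi_{\{u_i>0\}}+G'_{1/n}(-u_i)\chi_{\{u_i<0\}}\bigr]u_{ij}\,\psi\,dx.
\end{equation*}
As $n\to\infty$, the left-hand side tends to $\int u_i\,\partial_j\psi\,dx$ by bounded convergence, while on the right-hand side $G'_{1/n}$ converges a.e.\ to $\chi_{\{u_i\neq 0\}}$; combining this with the Stampacchia-type vanishing $u_{ij}=0$ a.e.\ on $\{u_i=0\}\setminus Z_u$ and with $|Z_u|=0$ from Remark \ref{Rem|Zu|=0}, dominated convergence (justified by the $L^q$-bound on $u_{ij}$) yields $-\int u_{ij}\,\psi\,dx$.

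The step I expect to be the main obstacle is the rigorous justification that the classical derivative of $\phi_n$ outside $\bar{U_n}$ is truly the distributional derivative of $\phi_n$ on all of $\tOm$, since $\phi_n$ itself is only continuous and piecewise $C^1$ across the free boundary $\{|u_i|=1/n\}$. The key feature that makes this work is precisely that $\phi_n\equiv 0$ on an \emph{open} neighbourhood of $Z_u$: this eliminates the need to handle any boundary contribution from the critical set, reducing the matter to a standard Meyers--Serrin/mollification argument on the $C^2$-region, exactly in the spirit of \cite[Proposition 2.2]{DamSciu}.
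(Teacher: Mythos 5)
Your proposal is correct and follows essentially the same strategy the paper intends: the paper itself gives no proof, only the reference to \cite[Proposition 2.2]{DamSciu}, and that proof (like Corollary \ref{CorforLou}, which you modelled) is precisely the truncation-and-limit argument you describe. One point worth making explicit: your appeal to the ``Stampacchia-type vanishing $u_{ij}=0$ a.e.\ on $\{u_i=0\}\setminus Z_u$'' must be understood as the elementary $C^1$ fact---on $\Om\setminus(Z_u\cup\{0\})$ the function $u_i$ is $C^1$, so $\{u_i=0,\ \nabla u_i\neq 0\}$ is a $C^1$ hypersurface and hence Lebesgue null---and not as the Sobolev form of Stampacchia's theorem invoked in the paper's closing Remark, since that Remark derives the latter as a \emph{consequence} of the present theorem and would be circular here. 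Likewise, the free-boundary concern you flagged is resolved simply by noting that $\phi_n$, being the composition of the Lipschitz map $G_{1/n}$ with $u_i$ (which is $C^1$ on $\tOm\setminus Z_u$) and vanishing identically on the open set $U_n\supset Z_u$, is locally Lipschitz on all of $\tOm$; thus $\phi_n\in W^{1,\infty}_{loc}$ with the a.e.\ chain rule supplied by the standard Stampacchia lemma for Lipschitz superpositions, and no separate Meyers--Serrin or mollification argument is needed.
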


\begin{rem}
	Since $q< \frac{p-1}{p-2} < 2$, by Theorem \ref{PropforStamp}, we have that for every $i=1,...,N$
	$$ u_{i} \in W^{1,q}_{loc}(\Om \setminus \{0\}) \quad \mbox{ for } 1<p<\infty.$$
	
	\noindent By Stampacchia Theorem \cite[Theorem 1.56]{Troianello},  we get that $$\nabla u_{i}=0 \mbox{ a.e. in } \{u_{i}=0\}, \mbox{ for } i=1,...,N.$$
	
	\noindent Then, the choice to set zero the second derivatives of $u$ in \eqref{DerSeconde} is completely justified.
\end{rem}

\section{Declarations}

{\bf Competing interests.}
The authors declare that they have no competing interests.\ \\

{\bf Acknowledgment.}

{\bf Authors' contributions.}
Each author equally contributed to this paper, read and approved the final manuscript.

\end{document}